\documentclass{amsart}
\usepackage{amsmath}
\usepackage{amssymb}
\usepackage{amsthm}
\usepackage{enumerate}
\usepackage{enumitem}
\usepackage[pdftex]{graphicx}
\usepackage{caption}
\theoremstyle{definition}
\newtheorem{definition}{Definition}[section]
\theoremstyle{plain}
\newtheorem{lemma}[definition]{Lemma}
\newtheorem{theorem}[definition]{Theorem}

\newtheorem{proposition}[definition]{Proposition}
\newtheorem{corollary}[definition]{Corollary}
\theoremstyle{remark}

\makeatletter
\@namedef{subjclassname@2020}{
  \textup{2020} Mathematics Subject Classification}
\makeatother

\newcommand{\colorgL}{\mathcal L^n_{\textrm{cg}}}
\newcommand{\colorgT}{T^n_{\text{cg}}}
\newcommand{\colorgK}{{\mathbf K}^n_{\textrm{cg}}}
\newcommand{\randgT}{T^n_{\textrm{gcg}}}
\newcommand{\randgF}{\mathbb G^n_{\textrm{gcg}}}
\newcommand{\frcolgT}{T^n_{\overline{m}\text{-fcg}}}
\newcommand{\frcolgK}{\mathbf{K}^n_{\overline{m}\text{-fcg}}}
\newcommand{\frcolgTc}{T^{c,n}_{\overline{m}\text{-fcg}}}
\newcommand{\randfrcolgT}{T^n_{\overline{m}\text{-gfcg}}}

\newcommand{\myth}{\operatorname{Th}}
\newcommand{\mytp}{\operatorname{tp}}
\newcommand{\mytptwo}{\operatorname{TP}_2}
\newcommand{\myntptwo}{\operatorname{NTP}_2}
\newcommand{\mynsop}{\operatorname{NSOP}}
\newcommand{\myacl}{\operatorname{acl}}
\newcommand{\mydcleq}{\operatorname{dcl}^{\textit{eq}}}
\newcommand{\myacleq}{\operatorname{acl}^{\textit{eq}}}
\newcommand{\mydiag}{\operatorname{Diag}}
\newcommand{\mysop}{\operatorname{SOP}}
\newcommand{\mycol}{\operatorname{col}}
\newcommand{\compgraph}{K_{\overline{m}}}
\newcommand{\subcompgraph}{{\widetilde K}_{\overline{m}}}

\def\mathpalette#1#2{%
	\mathchoice
	{#1\displaystyle{#2}}%
	{#1\textstyle{#2}}%
	{#1\scriptstyle{#2}}%
	{#1\scriptscriptstyle{#2}}}

\def\Ind#1#2{#1\setbox0=\hbox{$#1x$}\kern\wd0\hbox to 0pt{\hss$#1\mid$\hss}
	\lower.9\ht0\hbox to 0pt{\hss$#1\smile$\hss}\kern\wd0}

\def\ind{\mathop{\mathpalette\Ind{}}}

\def\notind#1#2{#1\setbox0=\hbox{$#1x$}\kern\wd0
	\hbox to 0pt{\mathchardef\nn=12854\hss$#1\nn$\kern1.4\wd0\hss}
	\hbox to 0pt{\hss$#1\mid$\hss}\lower.9\ht0 \hbox to 0pt{\hss$#1\smile$\hss}\kern\wd0}

\def\nind{\mathop{\mathpalette\notind{}}}

\newcommand{\myindzero}{\ind^\text{n}}

\newcommand{\myindalg}{\ind^\text{alg}}
\newcommand{\mynindalg}{\nind^\text{alg}}
\newcommand{\myinddiv}{\ind^\text{d}}
\newcommand{\myninddiv}{\nind^\text{d}}
\newcommand{\myindfork}{\ind^\text{f}}

\begin{document}
\title[A note on generic $n$-partite graphs]{A note on generic $n$-partite graphs}
\author[M. Fujita]{Masato Fujita}
\address{Department of Liberal Arts,
Japan Coast Guard Academy,
5-1 Wakaba-cho, Kure, Hiroshima 737-8512, Japan}
\email{fujita.masato.p34@kyoto-u.jp}

\begin{abstract}
	An $n$-partite graph is a graph such that every vertex has a color in $\{1,\ldots,n\}$ and every two vertices of the same color are not adjacent.
	We study the model comparisons of the theories of $n$-partite graph and $\compgraph$-free $n$-partite graph, where $\compgraph$ is a complete graph of a given size $\overline{m}$.
	The model companion of the theory of $n$-partite graph is simple and has IP.
	The model companion of the theory of $\compgraph$-free $n$-partite graph has $\mytptwo$, $\mysop_3$ and $\mynsop_4$ if $n > 2$.
	Forking independence coincides with dividing independence in this theory.
\end{abstract}

\subjclass[2020]{Primary 03C45; Secondary 05C15}

\keywords{Color; Generic $n$-partite graphs, $\operatorname{NSOP}_4$}

\thanks{The author was supported by JSPS KAKENHI Grant Number JP25K07109.}

\maketitle

\section{introduction}\label{sec:intro}
An $n$-partite graph is a graph such that every vertex has a color in $\{1,\ldots,n\}$ and every two vertices of the same color are not adjacent.
In this paper, we study the model companions of the theories of $n$-partite graph and $\compgraph$-free $n$-partite graph, where $\compgraph$ is a complete graph of a given size $\overline{m}$.
See \cite[Definition 3.2.8]{TZ} for the definition of model companion.

We recall previous works related to this study.
First, we recall studies on non-colored graphs.
It is well known that the theory of random graph, which is the model companion of the theory of non-colored graph, is simple.
See \cite[Corollary 7.3.14]{TZ} for instance.
Let $K_m$ be the complete graph of $m$ vertices.
A complete graph is a graph such that every two vertices are adjacent to each other.
The theory of generic $K_m$-free graph has $\mytptwo$ by \cite[Example 3.13]{Chernikov} and has $\mynsop_4$ by \cite[Theorem 1.1]{Conant}.
In particular, they are not simple.
In addition, the notions of dividing and forking do not coincide in the formulas in the theory of generic $K_m$-free graph by \cite[Theorem 6.2]{Conant2}, but forking independence coincides with dividing independence \cite[Theorem 5.3]{Conant2}.

The model companion of the theory of $\compgraph$-free bipartite graph, which is colored by two colors, is studied in \cite{CK}.
%Observe that a bipartite graph is a $2$-colored graph.
Conant and Kruckman show that this theory is $\mynsop_1$ (\cite[Theorem 4.11]{CK}) and forking and dividing independence coincide with each other in this theory \cite[Corollary 4.24]{CK}, though the notions of dividing and forking do not coincide in formulas \cite[Proposition 4.25]{CK}.
Recall that $\mynsop_1$ is equivalent to $\mynsop_2$ by \cite{M}.

Suppose $n \geq 2$.
Let $R$ be a binary predicate and $C_1, \ldots, C_n$ be unary predicates.
Put $\colorgL:=(R,C_1,\ldots, C_n)$.
Let $\colorgT$ be the $\colorgL$-theory defined as follows:
\begin{enumerate}
	\item[(1)] $\forall x\ \neg R(x,x)$
	\item[(2)] $\forall x,y\ R(x,y) \leftrightarrow R(y,x)$
	\item[(3)] $\forall x \ \bigvee_{i=1}^n C_i(x)$
	\item[(4)] $\forall x \ \bigwedge_{i=1}^n (C_i(x) \rightarrow \bigwedge_{j \neq i}\neg C_j(x))$
	\item[(5)] $\forall x,y \ R(x,y) \rightarrow \bigwedge_{i=1}^n (C_i(x) \rightarrow \neg C_i(y))$
\end{enumerate}
The language $\colorgL$ and the theory $\colorgT$ are called the language and the theory of \textit{$n$-partite graph} ($n$-colored graph), respectively.
The subscript `cg' in $\colorgL$ and $\colorgT$ is an abbreviation of colored graph.

In this paper, we study extensions of $\colorgT$.
We summarize the terminology and notations used in this paper.
A \textit{graph} means an $\colorgL$-structure.
For a graph $G$, a \textit{copy} of $G$ is a graph isomorphic to $G$.
Elements in a graph are called \textit{vertices}.
We say that a vertex $v$ is of \textit{color} $k$ if $C_k(v)$ holds.
For a vertex $v$, the color of $v$ is denoted by $\mycol(v)$.
Let $\mycol(G)$ denote the set of colors of vertices in a graph $G$.
A vertex $x$ is \textit{adjacent} to another vertex $y$ if $R(x,y)$ holds.
We say that $x$ is separated from $y$ if $\neg R(x,y)$ holds.
For two graphs $G_1$ and $G_2$, we say that $G_1$ is \textit{fully adjacent} to $G_2$ if, for every $1 \leq c \leq n$ and $v \in G_1$ of color $c$, $v$ is adjacent to all the vertices in $G_2$ of color $\neq c$.
$G_1$ is \textit{fully separated} from $G_2$ if every $v \in G_1$ is separated from every $v_2 \in G_2$.
We say that a graph $G$ is \textit{monochromatic} if there exists $1 \leq c \leq n$ such that $C_c(x)$ holds for every $x \in G$.

In Section \ref{sec:colored}, we study the model companion of $\colorgT$.
The following theorem summarizes the major results in Section \ref{sec:colored}.
\begin{theorem}\label{thm:main1}
	The model companion $\randgT$ of $\colorgT$ is complete, simple and has IP.
\end{theorem}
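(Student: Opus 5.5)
The plan is to first write down explicit axioms for a candidate $\randgT$ and show it is the model companion. I would take $\randgT$ to be $\colorgT$ together with the following extension axioms. For every color $c$, every finite tuple of distinct vertices $\bar a=(a_1,\ldots,a_k)$, and every partition of $\{i : \mycol(a_i)\neq c\}$ into sets $I$ and $J$, there is a vertex $v\notin\bar a$ of color $c$ with $R(v,a_i)$ for $i\in I$ and $\neg R(v,a_i)$ for $i\in J$. Each instance is first-order, because the colors of the $a_i$ are fixed by a disjunction over color assignments.

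To show that every model of $\colorgT$ embeds into a model of $\randgT$, I would realize the missing one-point extensions one at a time. Adding a single new vertex of color $c$ with the prescribed edges never violates axioms (1)--(5). Iterating $\omega$ times and taking unions gives the required model. Conversely, I would check that models of $\randgT$ are existentially closed. The class of finite $n$-partite graphs has the amalgamation property: take the free amalgam with no new edges, which keeps the same colors and remains $n$-partite. So a finite existential configuration over $\bar a$ is consistent with $\colorgT$ exactly when it is consistent with the diagram, and it can be realized in a model of $\randgT$ by adding its new vertices one by one via the extension axioms.

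For completeness, I would run a back-and-forth between any two models of $\randgT$. Partial isomorphisms between finite substructures preserve colors, and the extension axioms extend them one vertex at a time. Hence all models are $\omega$-saturated-equivalent, i.e.\ elementarily equivalent. Alternatively, the theory is $\aleph_0$-categorical, being the theory of the Fraïssé limit of finite $n$-partite graphs, and it has no constants. The same back-and-forth also gives quantifier elimination, and shows that $\myacl(A)=A$.

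IP is immediate from the formula $R(x,y)$ restricted to $C_1(x)\wedge C_2(y)$, which uses $n\ge 2$. Given infinitely many vertices $b_i$ of color $2$ and any $S\subseteq\omega$, the extension axioms plus compactness give a vertex $a$ of color $1$ with $R(a,b_i)$ if and only if $i\in S$. So $R$ shatters every set.

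For simplicity, I would use the Kim--Pillay characterization. Define $A\ind_C B$ if and only if $A\cap B\subseteq C$, and verify invariance, symmetry, monotonicity, finite character, extension, local character, and the independence theorem over models. The independence theorem is the main obstacle, and I would handle it by a free amalgamation argument. Given $a_1\ind_M b$ and $a_2\ind_M c$ with $\mytp(a_1/M)=\mytp(a_2/M)$ and $b\ind_M c$, quantifier elimination reduces the task to constructing a graph on $M\,b\,c\,a$ that extends the three prescribed configurations $Mab$, $Mac$, $Mbc$. Since the three pairwise pieces overlap only in $M$ together with the respective points, there is no conflict on any edge. Colors of the new vertices are determined by $\mytp(a/M)$, so they agree. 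Leaving all other pairs non-adjacent keeps the graph $n$-partite, because the only constraint, that same-colored vertices are non-adjacent, is preserved by every given piece and by non-edges. Saturation then embeds this graph over $Mbc$. Local character holds with $|T|$-bounds, and extension holds by free amalgamation. This proves that $\randgT$ is simple, and in fact that $\myindfork$ equals $\myindalg$.
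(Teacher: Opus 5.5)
Your proposal is correct and follows essentially the paper's route: back-and-forth/Fra\"iss\'e theory for completeness and quantifier elimination, iterated one-point extensions for the model companion, shattering by $R$ for IP, and the Kim--Pillay criterion with naive independence $A\cap B\subseteq C$, verified via free amalgamation, for simplicity. The differences are cosmetic: you add an explicit distinctness clause $v\notin\bar a$ to the extension axioms, you get model completeness from existential closedness rather than from quantifier elimination, and transitivity, the one Kim--Pillay condition you did not list, is immediate for naive independence.
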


Let $\overline{m}=(m_1,\ldots,m_n)$ be a sequence of positive integers.
Let $\compgraph$ be the graph consisting of $m_i$ vertices of color $i$ for $1 \leq i \leq n$ and every two vertices with different colors are adjacent.
Let $\frcolgT$ be the theory of \textit{$\compgraph$-free} $n$-partite graph, that is, every model of $\frcolgT$ does not have a copy of $\compgraph$ as a subgraph.

Section \ref{sec:free} is devoted for a study of the model companion of $\frcolgT$.
The following theorem summarizes the major results in Section \ref{sec:free}.
\begin{theorem}\label{thm:main2}
	Suppose $n>2$. The model companion $\randfrcolgT$ of $\frcolgT$ is complete, and has $\mytptwo$, $\mysop_3$ and $\mynsop_4$.
	Forking independence coincides with dividing independence in this theory.
\end{theorem}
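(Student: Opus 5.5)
We prove Theorem \ref{thm:main2} in five stages: existence and completeness of the model companion, $\mytptwo$, $\mysop_3$, $\mynsop_4$, and forking $=$ dividing.

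\textbf{Existence and completeness.} The class $\frcolgK$ of finite $\compgraph$-free $n$-partite graphs has the hereditary property. It also has the free amalgamation property: glue two graphs over a common subgraph, adding no new edges. Any copy of $\compgraph$ in the amalgam is a clique, so it lies entirely inside one factor. Hence the class has a Fraïssé limit $M$, which is ultrahomogeneous. For every finite $A \subseteq B$ in $\frcolgK$, the extension property ``every copy of $A$ extends to a copy of $B$'' is first-order. Let $\randfrcolgT$ consist of $\frcolgT$ together with these extension axioms. Its models are exactly the existentially closed models of $\frcolgT$: every model of $\frcolgT$ embeds into a model of $\randfrcolgT$ by iterated free amalgamation, and conversely an existentially closed model satisfies the axioms. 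A back-and-forth argument gives quantifier elimination and completeness. Since the language has no constants, all models contain the same empty substructure. Algebraic closure is trivial, $\myacl(A)=A$, by free amalgamation.

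\textbf{$\mytptwo$.} We adapt Chernikov's argument for $K_m$-free graphs, using $n>2$. Colors matter because, when $n=2$, the bipartite case is $\mynsop_1$ by \cite{CK} and so is $\myntptwo$-like in the relevant sense. Choose colors $1,2,3$, and let $D$ be a copy of $\compgraph$ with one vertex $v_1$ of color $1$ and one vertex $v_2$ of color $2$ removed. Set
\[\varphi(x;y_1y_2\bar d) = R(x,y_1)\wedge R(x,y_2)\wedge \text{``}x\text{ fully adjacent to }\bar d\text{''},\]
where $x$ has color $3$, $y_1$ has color $1$, $y_2$ has color $2$, and $\bar d$ is a realization of $D$ minus one vertex of color 3 (shared across the array). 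Alternatively, use a parameter pair $(y,z)$ with $R(y,z)$, together with the fixed remainder.

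Build the array $a_{i,j}=(b_{i,j},c_{i,j})$ as follows. Row $i$ has pairwise distinct $b_{i,j}$ that are all adjacent to a fixed $c_i$ varying along the row. Arrange things so that, within a row, two instances force $x$ to complete a copy of $\compgraph$: this gives $2$-inconsistency. Along a path $f$, we take the graph in which the $c$'s and $b$'s from different rows are left non-adjacent; this avoids cliques, so the path is consistent by the extension axioms and compactness. I expect tuning this so that paths are consistent while rows are $2$-inconsistent to be the main combinatorial obstacle. The standard trick is to let row-inconsistency come from edges inside the parameters of the same row, which are absent across rows.

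\textbf{$\mysop_3$.} We exhibit an indiscernible sequence and formulas witnessing $\mysop_3$, via cyclic triangle-type configurations. With three colors $1,2,3$ and the extra vertices fully adjacent, membership of a triangle among vertices of three colors completes $\compgraph$. Concretely, take $a_i=(x_i,y_i,z_i)$ with $x_i,y_i,z_i$ of colors $1,2,3$. Let $\varphi(\bar a,\bar a')$ assert edges $x\!-\!y'$, $y\!-\!z'$, and $\psi$ assert complementary edges in a pattern such that a $3$-cycle of $\varphi\cup\psi$ would produce a colored triangle plus the fixed remainder, i.e.\ a copy of $\compgraph$. Consistency of $\{\varphi(a_i,a_j): i<j\}$ follows because the chain pattern avoids triangles. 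This uses $n>2$.

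\textbf{$\mynsop_4$.} Following Conant, we show that free amalgamation theories with trivial $\myacl$ satisfy the criterion of \cite[Theorem 1.1]{Conant}: any free amalgamation class in a finite relational language whose generic theory has disintegrated algebraic closure is $\mynsop_4$. We check that the free-amalgamation axioms in Conant's sense hold, with the free amalgamation as the independence relation. \textbf{Forking $=$ dividing.} Again following \cite{Conant2}, which treats general free amalgamation theories, we get $\ind^f=\ind^d$, and both equal the relation $A\ind_C B$ iff $A\cap B\subseteq C$ together with a condition on edges. The key step is showing that this relation has full existence and implies non-dividing, via free amalgamation of indiscernible copies. Most of this is imported from Conant's abstract results once the hypotheses are verified.
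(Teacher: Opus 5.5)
Your proposal rests on a false claim. $\frcolgK$ does not have free amalgamation, and once some $m_c\ge 2$ it does not even have amalgamation (Lemma \ref{lem:bipartite}). Your reason, that a copy of $\compgraph$ is a clique and so lies in one factor, overlooks that $\compgraph$ is complete \emph{multipartite}. Same-colored vertices are never adjacent, so a copy can take vertices of one common color from both factors while all other colors lie in the base. This has several consequences:
\begin{itemize}
\item The extension axioms you write down are jointly inconsistent with $\frcolgT$. Apply the axioms for $(C,A_1)$ and $(C,A_2)$ of Lemma \ref{lem:bipartite} to one copy of $C$; this produces $q_1\neq q_2$ completing a copy of $\compgraph$.
\item There is no Fra\"iss\'e limit.
\item Quantifier elimination fails once some $m_c\ge 2$; the paper has it only for $\overline{m}=(1,\ldots,1)$ (Lemma \ref{lem:rand_qe}).
\item $\myacl$ is not trivial. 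The exactly $m_c-1$ vertices of color $c$ fully adjacent to a copy of $\subcompgraph^c$ are algebraic over it, and $\myacl(A)$ is the smallest model of $\frcolgTc$ containing $A$ (Proposition \ref{prop:fr_monster_basic}(3)).
\end{itemize}
So your completeness argument collapses. The paper instead axiomatizes the model companion by $\frcolgTc$ plus extension axioms only for \emph{safe pairs}. It proves model completeness by Robinson's test using the ``exactly $m_c-1$'' axiom (Lemma \ref{lem:fr_model_complete}), and gets completeness from joint embedding via disjoint unions (Corollary \ref{cor:complete}).

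The same error sinks your $\mynsop_4$ and forking$=$dividing steps, which are imports from Conant whose hypotheses fail here. The class is not a free amalgamation class, and $\myacl$ is not disintegrated. Moreover, freely amalgamating algebraically closed sets over an algebraically closed base need not give an algebraically closed set, which Conant's framework requires of the independence relation. For example, take $n=3$ and $\overline{m}=(2,1,2)$, with $u$ of color $2$ and each $x_i$ of color $1$ adjacent to $u$. Amalgamate $\{u,x_1\}$ and $\{u,x_2\}$ freely over $\{u\}$. Then $\{x_1,x_2,u\}$ is a copy of $\subcompgraph^3$ lacking its required completing vertex of color $3$. Also, \cite{Conant2} treats only Henson graphs. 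Here $\myinddiv$ is not ``$A\cap B\subseteq C$ plus an edge condition''; it is $\myindalg$ plus the absence of the three types of exceptional graphs in Theorem \ref{thm:dividing}.

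The paper proves $\mynsop_4$ directly. It reduces via Proposition \ref{prop:almost_qe} and indiscernibility to a basic existential formula and forms the minimally adjacent $4$-cycle of witnesses. A case analysis using $n>2$ and the consistency of three consecutive instances shows this cycle is $\compgraph$-free, hence embeddable in $\mathbb M$. For forking$=$dividing, the paper proves Theorem \ref{thm:dividing}. It then checks extension for $\myinddiv$ by amalgamating freely over $\myacl(BC)$, which is legitimate by Lemma \ref{lem:amalgam_free_colored} precisely because $\myacl(BC)$ is a model of $\frcolgTc$.

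Finally, your $\mytptwo$ and $\mysop_3$ sketches point the right way but are unfinished; the $\mytptwo$ array is not coherently described. They also use single vertices, which only works when $m_1=m_2=m_3=1$. You need $\overline{x}$ of color $1$ and length $m_1$, color-$2$ tuples $\overline{b}$ of length $m_2$, and copies $\overline{c}$ of $\compgraph\langle\{3,\ldots,n\}\rangle$.
\begin{itemize}
\item For $\mytptwo$, make $\overline{b}_{ij}$ fully adjacent to $\overline{c}_{i'j'}$ iff $i=i'$ and $j\neq j'$. Then two instances in one row yield the copy $\overline{x}\,\overline{b}_{ij}\overline{c}_{ij'}$ of $\compgraph$. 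Paths are consistent by the safe-pair axiom, not your extension axioms: along a path no $\overline{b}$ is adjacent to any $\overline{c}$, so the parameters contain no copy of $\subcompgraph^1$.
\item For $\mysop_3$, let $\phi$ say that $\overline{x}_1$, $\overline{y}_1$, $\overline{z}_1$ are fully adjacent to $\overline{y}_2$, $\overline{z}_2$, $\overline{x}_2$ respectively. A $3$-cycle then contains a copy of $\compgraph$, while the increasing chain is $\compgraph$-free by a maximal-index argument.
\end{itemize}
Also, the bipartite theory of \cite{CK} is $\mynsop_1$ yet $\mytptwo$, so it is not ``$\myntptwo$-like''.
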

Let us compare the above theorem with the results in \cite{CK}, which examines $\randfrcolgT$ for $n=2$.
The theory $\randfrcolgT$ for $n>2$ shares several common properties with the case of $n=2$.
For instance, they are $\mytptwo$, and forking independence coincides with dividing independence in these theories.
However, $\randfrcolgT$ has $\mynsop_2$ for $n=2$, but has $\mysop_3$ for $n>2$.

We employ standard conventions used in the papers of model theory.
We represent a structure and its universe by the same symbol such as $M$ and $\mathbb M$.
We use a small alphabet such as $a$ and $x$ to represent a single element of a universe or a single variable.
We use an overlined small alphabet such as $\overline{a}$ and $\overline{x}$ to represent a tuple of elements of a universe or a tuple of variables.
In this note, a tuple means a sequence of elements of finite length not containing two identical elements.
A tuple $\overline{a}$ of finite length is sometimes identified with the sets of elements in $\overline{a}$.
We write $AB$ instead of $A \cup B$ for sets $A$ and $B$. 
The cardinality of a set $S$ and the length of a tuple $\overline{a}$ are denoted by $|S|$ and $|\overline{a}|$, respectively.
Let $\overline{x}=(x_1,\ldots,x_n)$ is a tuple of $n$-variables.
We denote $\exists x_1,\ldots,x_n$ by $\exists \overline{x}$.
We define $\forall \overline{x}$ in the same manner.
For a unary predicate $P$, $P(\overline{x})$ means $\bigwedge_{i=1}^n P(x_i)$.
The algebraic closure of a small set $A$ in a monster model is denoted by $\myacl(A)$.

For a graph $G$, we denote the set of vertices in $G$ by the same symbol $G$.
For a tuple of $\overline{a}$ of vertices in a graph, the set of vertices in $\overline{a}$ and the graph consisting of $\overline{a}$ are denoted by the same symbol $\overline{a}$.
For two tuples $\overline{a}$ and $\overline{b}$, the concatenation of $\overline{a}$ with $\overline{b}$ is denoted by $\overline{a}\overline{b}$ or $\overline{a}{}^{\frown}\overline{b}$.

\section{Generic $n$-partite graphs}\label{sec:colored}

We study the theory of generic $n$-partite graphs $\randgT$ in this section.
%Differently from the theory of random colored trees, $\randgT$ has IP.
%Note that each model of $\randgT$ is a $K_{n+1}$-free graph if we forget the colors, but we show that $\randgT$ is simple.
%$\randgT$ is rather similar to the theory of random graphs, which has IP and is simple.
We recall a basic definition.
\begin{definition}
	Let $\mathcal L$ be a language.
	Let $\mathbf K$ be a family of finitely generated $\mathcal L$-structures with $\emptyset \in \mathbf K$ which is closed under $\mathcal L$-isomorphisms.
	The following properties are well-known.
	\begin{enumerate}
		\item[(a)] (Heredity) For every $ A \in \mathbf K$, every finite substructure of $ A$ is a member of $\mathbf K$.
		\item[(b)] (Amalgamation) Let $ A,  B_1,  B_2 \in \mathbf K$ and $f_i: A \to  B_i$ be embeddings for $i=1,2$. There exist $ C \in \mathbf K$ and embeddings $g_i: B_i \to  C$ for $i=1,2$ such that $g_1 \circ f_1=g_2 \circ f_2$.
	\end{enumerate}
	
	Suppose $\mathcal L$ is a relational language.
	We say that $\mathbf K$ possesses the \textit{free amalgamation property} if, for all $A_1, A_2, C \in \mathbf K$ with $C \subseteq A_1 \cap A_2$, there exist $D \in \mathbf K$ and $\mathcal L$-embeddings $f_i:A_i \to D$ such that $f_1(C)=f_2(C)$, $D=f_1(A_1) \cup f_2(A_2)$ and, for every symbol $R \in \mathcal L$ and every $\overline{a} \in (A_1A_2C)^n$, $\overline{a} \in (A_iC)^n$ for some $i=1,2$ if $D \models R(\overline{a})$ under the identification of $A_i$ with $f_i(A_i)$.
\end{definition}

Let $\randgT$ be the extension of $\colorgT$ by the following sentences (6):
Let $1 \leq k \leq n$ and $m_1,\ldots, m_n$ be nonnegative integers.
Let $\sigma: \bigcup_{i=1}^n (\{i\} \times \{1,\ldots,m_i\}) \to \{0,1\}$ be an arbitrary map.
We put $R^0(x,y)=\neg R(x,y)$ and $R^1(x,y)=R(x,y)$.
\begin{enumerate}[start=6]
	\item $\displaystyle\forall \overline{x^1}, \ldots, \forall \overline{x^n}\ \left(\bigwedge_{i=1}^n C_i(\overline{x^i}) \rightarrow \exists y\ \bigwedge_{i \neq k}\bigwedge_{j=1}^{m_i}R^{\sigma(i,j)}(x_j^i,y) \wedge C_k(y)\right),$\label{cond:random}
	where $\overline{x^i}=(x_1^i,\ldots x^i_{m_i})$ for $1 \leq i \leq n$.
\end{enumerate}
The theory $\randgT$ is called the theory of \textit{generic $n$-partite graphs}.

Let $\colorgK$ be the collection of finite models of $\colorgT$ under the identification of structures isomorphic to each other.
For every $B_1, B_2 \in \colorgK$ having a common substructure $A \in \colorgK$, the \textit{free amalgam} of $B_1$ and $B_2$ over $ A$ is the model $ M$ of $\colorgT$ such that there exist $\colorgL$-embeddings $f_i: B_i \to M$ such that $f_1(B_1) \cap f_2(B_2)=f_1(A)=f_2(A)$, $f_1(B_1) \cup f_2(B_2)=M$, and $f_1(B_1) \setminus f_1(A)$ is fully separated from $f_2(B_2) \setminus f_2(A)$. 
It is easy to see that the free amalgam exists uniquely up to isomorphism.
We denote the free amalgam of $B_1$ and $B_2$ over $A$ by $B_1 \otimes_{A} B_2$.

For convenience, we assume that the empty structure $\emptyset$ is also a member of $\colorgK$.
The following lemma is obvious.
\begin{lemma}\label{lem:pre_amalgamation}
	$\colorgK$ enjoys the heredity property and the free amalgamation property.
\end{lemma}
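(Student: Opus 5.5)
Both properties follow directly from the fact that the axioms (1)--(5) of $\colorgT$ are universal sentences, together with the explicit construction of the free amalgam $B_1 \otimes_A B_2$ given just before the statement.

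\emph{Heredity.} Each of the axioms (1)--(5) is a universal sentence. Hence, if $A \in \colorgK$ and $B \subseteq A$ is a substructure, $B$ satisfies (1)--(5), since truth of universal sentences passes down to substructures. Every substructure of a finite structure is finite, so $B \in \colorgK$. The empty structure belongs to $\colorgK$ by convention, which covers the degenerate case.

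\emph{Free amalgamation.} Let $A_1, A_2, C \in \colorgK$ with $C \subseteq A_1 \cap A_2$, where $C$ is a common substructure of both. After renaming elements, assume $A_1 \cap A_2 = C$. Define $D$ on the universe $A_1 \cup A_2$ as follows.
\begin{itemize}
\item A vertex of $D$ gets color $k$ if it has color $k$ in whichever $A_i$ contains it. This is well defined on $C$, because $C$ is a substructure of both $A_i$.
\item $R^D(x,y)$ holds if and only if $x,y \in A_i$ and $A_i \models R(x,y)$ for some $i=1,2$.
\end{itemize}
This $D$ is exactly $A_1 \otimes_C A_2$, and the inclusions $f_i : A_i \to D$ are the required embeddings. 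Checking this takes four steps.
\begin{enumerate}
\item \emph{The $f_i$ are embeddings.} For $x,y \in A_1$, if the pair is also in $A_2$, then $x,y \in C$. Since $C$ is a common substructure, $A_1$ and $A_2$ agree on $R(x,y)$. So no new edges are added inside $A_1$, and likewise inside $A_2$.
\item \emph{Axioms (1)--(4).} Irreflexivity, symmetry, and the requirement that each vertex has exactly one color hold in $D$. Each of these is inherited pointwise or edgewise from the $A_i$.
\item \emph{Axiom (5).} Every edge of $D$ lies inside some $A_i$, which is a model of $\colorgT$. Hence its endpoints have different colors.
\item \emph{The freeness clause.} $D = f_1(A_1) \cup f_2(A_2)$ holds by construction. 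Any $R$-related pair lies in a single $A_i$, that is, in $(A_iC)^2$. In particular, $A_1 \setminus C$ is fully separated from $A_2 \setminus C$.
\end{enumerate}

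There is no real obstacle. The only point needing care is step 1, where one must check that the edge relation on $C$ is consistent between $A_1$ and $A_2$. That consistency is precisely the assumption that $C$ is a common substructure.
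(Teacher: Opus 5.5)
Your proof is correct and is exactly the routine verification the paper has in mind; the paper gives no argument, calling the lemma obvious. The key points are that axioms (1)--(5) are universal, and that the edge relation of $A_1\otimes_C A_2$ is well defined because $C$ is a common substructure. The freeness clause for the unary predicates $C_k$ holds trivially, so treating only $R$ suffices.
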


By \cite[Theorem 4.4.4]{TZ}, $\colorgK$ has the Fra\"iss\'e limit $\randgF$.
$\myth(\randgF)$ is $\omega$-categorical and admits elimination of quantifiers by \cite[Theorem 4.4.7]{TZ}.

\begin{theorem}\label{thm:equaliyt_theory}
	The equality $\randgT \equiv \myth(\randgF)$ holds.
	In particular, $\randgT$ is complete.
\end{theorem}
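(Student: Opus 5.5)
The proof has two directions, and both go through the standard characterization of the Fra\"iss\'e limit of a class of finite relational structures with amalgamation: a countable structure whose age is contained in $\colorgK$ is isomorphic to $\randgF$ if and only if it is weakly homogeneous (has the extension property).

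\textbf{Step 1: $\randgF \models \randgT$.} Axioms (1)--(5) hold because every finite substructure of $\randgF$ lies in $\colorgK$. For (6), fix $1\le k\le n$, vertices $\overline{a^i}$ of color $i$, and a map $\sigma$. Let $A$ be the finite substructure on all the $\overline{a^i}$. Form $B$ by adding a new vertex $b$ of color $k$, adjacent to $a^i_j$ ($i\neq k$) exactly when $\sigma(i,j)=1$, and separated from the vertices of color $k$ in $A$. This $B$ is a model of $\colorgT$: no edge joins two vertices of the same color, and $R$ stays symmetric and irreflexive. So $B\in\colorgK$, and the extension property of $\randgF$ embeds $B$ over $A$, which yields the witness $y$. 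This direction is routine.

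\textbf{Step 2: every countable $M\models\randgT$ is isomorphic to $\randgF$.} The age of $M$ is contained in $\colorgK$ by (1)--(5). It remains to check the extension property: given finite $A\subseteq B$ in $\colorgK$ and an embedding $f:A\to M$, extend $f$ to $B$. By induction on $|B\setminus A|$ it suffices to treat $B=A\cup\{b\}$. Let $k=\mycol(b)$, and list the vertices of $f(A)$ of color $i\neq k$ as $\overline{x^i}$. Take $m_k=0$, or alternatively list the color-$k$ vertices with arbitrary $\sigma$; those conjuncts are omitted anyway since the conjunction ranges over $i\neq k$. Define $\sigma(i,j)=1$ exactly when $b$ is adjacent to the corresponding vertex of $A$. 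Axiom (6) then gives $y\in M$ of color $k$ with the prescribed adjacencies to the vertices of color $\neq k$. Adjacency of $y$ to color-$k$ vertices is automatically absent by (5), which matches $B$.

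The main obstacle is that $y$ must not already lie in $f(A)$. If $y=f(a)$, then $a$ has color $k$, and the formula only constrains vertices of other colors, so this could happen. To prevent it, I will enlarge the parameter tuple. Using (6) with suitable $\sigma$, first produce, for each color $i\neq k$, a fresh vertex $z_i$ of color $i$; since $n\ge2$, such an $i$ exists. Then require $y$ to be adjacent to $z_i$ while every color-$k$ vertex of $f(A)$ is separated from it. Concretely: pick some color $i_0\neq k$, and apply (6) with $k:=i_0$ to get $z$ of color $i_0$ that is separated from all color-$k$ vertices of $f(A)$ (and has arbitrary relations to the rest). Then apply (6) again with parameters $f(A)\cup\{z\}$, demanding adjacency to $z$. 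The resulting $y$ differs from every color-$k$ element of $f(A)$, and it differs from elements of other colors by (4).

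\textbf{Step 3: conclusion.} Step 2 shows that $\randgT$ is $\omega$-categorical with countable model $\randgF$. Every model of $\randgT$ is infinite, since (6) combined with the distinctness trick produces new elements indefinitely. By L\"owenheim--Skolem and the \L o\'s--Vaught test, $\randgT$ is complete. Since $\randgF\models\randgT$ by Step 1, we get $\randgT\equiv\myth(\randgF)$.
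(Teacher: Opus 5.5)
\textbf{The distinctness repair in Step 2 has a gap.} Your overall architecture matches the paper's. Step 1 is the paper's argument via $\colorgK$-saturation of $\randgF$. Step 2 is the paper's induction on $|B\setminus A|$ using (6), followed by uniqueness of the countable $\colorgK$-saturated structure. Using \L o\'s--Vaught in Step 3 instead of the paper's direct L\"owenheim--Skolem reduction is an inessential variation. You are right that the witness $y$ given by (6) might equal some $f(a)$ with $\mycol(a)=k$; the paper passes over this silently. But your repair does not work as written, because the auxiliary vertex $z$ comes from (6) and has exactly the same problem: nothing prevents $z\in f(A)$.

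Here is a concrete failure. Let $A=\{a,a'\}$ with $\mycol(a)=k$, $\mycol(a')=i_0$, and $a,a'$ separated. Let $b$ have color $k$ and be separated from $a'$. The only constraint you can impose on $z$ is ``color $i_0$ and separated from $f(a)$'', since there is no other vertex of color $\neq i_0$ in $f(A)$. The vertex $z=f(a')$ satisfies this. Your second demand $R(y,z)$ then contradicts the required $\neg R(y,f(a'))$. Making $z$ fresh by the same device needs a fresh vertex of another color, and the regress does not terminate. Your Step 3 claim that every model is infinite relies on the same trick, so it is unsupported too.

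\textbf{The missing ingredient} is a lemma that every color class of a model $M\models\randgT$ is infinite.
\begin{enumerate}
\item Each color class is nonempty, by (6) with all $m_i=0$.
\item If color $i$ contains distinct $x_1,\dots,x_p$ and $k\neq i$, apply (6) to this tuple with each of the $2^p$ maps $\sigma$. This gives $2^p$ vertices of color $k$, pairwise distinct because their neighbourhoods in $\{x_1,\dots,x_p\}$ differ.
\item Since $n\geq 2$, alternate this between two colors ($1\to 2\to 4\to 16\to\cdots$) to make both infinite. One more application makes every other color infinite.
\end{enumerate}

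With the lemma, the extension step goes through. For each color-$k$ vertex $u\in f(A)$, choose a vertex $w_u$ of some color $i_0\neq k$ outside $f(A)$, with $w_u\neq w_{u'}$ for $u\neq u'$. Add the $w_u$ to the color-$i_0$ parameters of (6) and require $R(y,w_u)\leftrightarrow\neg R(u,w_u)$. Since the $w_u$ lie outside $f(A)$, this does not conflict with the adjacencies prescribed by $b$. It forces $y\neq u$ for every color-$k$ vertex $u$ of $f(A)$, and (4) handles vertices of other colors. The lemma also directly gives that models of $\randgT$ are infinite, which Step 3 needs.
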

\begin{proof}
	First, we show that $\randgF$ is a model of $\randgT$.
	Let $1 \leq k \leq n$ and $m_1,\ldots, m_n$ be nonnegative integers.
	Let $\sigma: \bigcup_{i=1}^n \{i\} \times \{1,\ldots,m_i\} \to \{0,1\}$ be an arbitrary map.
	For $1 \leq i \leq n$, $X_i$ be an arbitrary monochromatic subset of $\randgF$ of cardinality $m_i$ whose vertices are of color $i$.
	We enumerate $X_i$ as $(x_1^i, \ldots, x_{m_i}^i)$.
	We want to find $y \in G$ such that $\randgF \models \bigwedge_{i \neq k}\bigwedge_{j=1}^{m_i}R^{\sigma(i,j)}(x_j^i,y) \wedge C_k(y)$.
	Since $\randgF$ is the Fra\"iss\'e limit of $\colorgK$, there exists $H \in \colorgK$ such that $H$ is isomorphic to $X:={\bigcup_{i=1}^n X_i}$ as $\colorgL$-structures.
	Let $f$ be the isomorphism between them.
	We identify $H$ with $X$ under the isomorphism $f$.
	Let $H'$ be the $n$-partite graph expanded from $H$ by a single vertex $y'$ as follows:
	$x^i_j$ is adjacent to $y'$ if and only if $i \neq k$ and $\sigma(i,j)=1$.
	We paint the node $y'$ by the color $k$.
	$H'$ is obviously a member of $\colorgK$.
	Since the Fra\"iss\'e limit $\randgF$ is $\colorgK$-saturated in the sense of \cite[Definition 4.4.1]{TZ}, there exists an embedding $g:H' \to \randgF$ extending $f:H \to X$.
	Put $y=g(y') \in \randgF$, then $\randgF \models \bigwedge_{i \neq k}\bigwedge_{j=1}^{m_i}R^{\sigma(i,j)}(x_j^i,y) \wedge C_k(y)$.
	This means that $\randgF$ is a model of $\randgT$.
	
	Next, we show that every model of $\randgT$ is a model of $\myth(\randgF)$.
	Let $M$ be an arbitrary model of $\randgT$.
	Our task is to show that $\myth(M)=\myth(\randgF)$.
	By the L\"owenheim-Skolem theorem \cite[Corollary 2.3.2]{TZ}, there exists a countable model of $\myth(M)$.
	We may assume that $M$ is countable by replacing $M$ with a countable model of $\myth(M)$.
	
	Let $A, B \in \colorgK$ with $A \subseteq B$.
	Let $f:A \to M$ be an embedding.
	Since $B$ is a finite structure, by an easy induction on $|B \setminus A|$, we can easily extend $f$ to an embedding $g:B \to M$ using property (\ref{cond:random}).
	This means that $M$ is $\colorgK$-saturated.
	By \cite[Theorem 4.4.2]{TZ}, $\myth(\randgF)$ is $\omega$-categorical.
	Therefore, $M$ is isomorphic to $\randgF$.
	This implies that $\myth(M) =\myth(\randgF)$.
\end{proof}

We introduce some features of $\randgT$.

\begin{proposition}\label{prop:model_compansion}
	$\randgT$ is a model companion of $\colorgT$.
\end{proposition}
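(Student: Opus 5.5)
To show that $\randgT$ is a model companion of $\colorgT$, I will check the two defining conditions: $\randgT$ is model complete, and every model of either theory embeds into a model of the other.

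The embedding condition in one direction is immediate. Every model of $\randgT$ is already a model of $\colorgT$, since $\randgT$ extends $\colorgT$. For the other direction, let $M \models \colorgT$. I will build a chain $M=M_0\subseteq M_1\subseteq\cdots$ of models of $\colorgT$. At each stage $M_{\ell+1}$ is obtained from $M_\ell$ by adjoining, for every instance of axiom (\ref{cond:random}) with parameters in $M_\ell$, a new vertex $y$ of color $k$ with exactly the prescribed adjacencies. The parameters are required to satisfy $C_i(\overline{x^i})$, so every vertex to which $y$ is made adjacent has color $i\neq k$. Distinct new vertices are left separated from one another. Hence axioms (1)--(5) are preserved and $M_{\ell+1}\models\colorgT$. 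The union $\bigcup_\ell M_\ell$ then satisfies (\ref{cond:random}), because each instance has its finitely many parameters in some $M_\ell$ and receives a witness in $M_{\ell+1}$. So $M$ embeds in a model of $\randgT$.

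For model completeness, I will use the fact that $\myth(\randgF)$ admits quantifier elimination. Theorem \ref{thm:equaliyt_theory} shows that $\randgT$ and $\myth(\randgF)$ have the same models, so $\randgT$ has quantifier elimination, and any theory with quantifier elimination is model complete. To be safe I will also give the direct argument. Let $M\subseteq N$ be models of $\randgT$ and let $\overline a\in M$. Then $\mytp^{M}(\overline a)$ and $\mytp^{N}(\overline a)$ both contain the same quantifier-free type. By quantifier elimination, the quantifier-free type determines the complete type, so $M\preceq N$.

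The one step that needs care is the quantifier elimination statement: the cited theorem gives it for $\myth(\randgF)$, and I must make sure it transfers to $\randgT$. This is exactly what Theorem \ref{thm:equaliyt_theory} provides, since it shows the two theories have the same models and therefore the same consequences. If one preferred to avoid relying on quantifier elimination, the alternative would be a back-and-forth argument. It would use the extension property for finite $A\subseteq B$ in $\colorgK$, which the proof of Theorem \ref{thm:equaliyt_theory} derived from (\ref{cond:random}) in $\omega$-saturated models, and show that finite partial isomorphisms between models are elementary.
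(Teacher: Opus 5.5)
Your proposal is correct and takes the same approach as the paper. You get model completeness from quantifier elimination for $\myth(\randgF)$, transferred to $\randgT$ via Theorem \ref{thm:equaliyt_theory}, and you embed an arbitrary model of $\colorgT$ into the union of a countable chain in which each stage adds fresh witnesses for axiom (\ref{cond:random}) over the previous stage. The only cosmetic difference is that the paper indexes the new vertices by a color $c$ and a finite set of vertices of color $\neq c$ to be joined to, rather than by instances of the axiom.
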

\begin{proof}
	Recall that $\myth(\randgF)$ admits quantifier elimination.
	By Theorem \ref{thm:equaliyt_theory}, $\randgT$ admits quantifier elimination.
	A nontrivial part is that every model $M$ of $\colorgT$ is a substructure of some model $N$ of $\randgT$.
	Put $M_0=M$.
	We define $M_i$ for $i<\omega$ as follows:
	
	For $1 \leq c \leq n$, let $\mathcal S_{i,c}$ the collection of finite sets $A$ of vertices in $M_{i-1}$ of color $\neq c$.
	We define $M_i$ as an extension of $M_{i-1}$ by adding a new vertex $p_{c,A}$ of color $c$ for every $1 \leq c \leq n$ and $A \in \mathcal S_{i,c}$ and joining $p_{c,A}$ with every vertex in $A$ by edges.
	It is obvious that $N=\bigcup_{i=1}^\infty M_i$ is a model of $\randgT$.
\end{proof}

\begin{definition}[{\cite{Simon_NIP}}]
	A theory $T$ has \textit{IP} if there exist a formula $\phi(\overline{x},\overline{y})$ and a subset $A$ of tuples from a model such that, for an arbitrary finite subset $B$ of $A$ and an arbitrary subset $I$ of $B$, $ \exists \overline{x}\ \bigwedge_{\overline{a} \in I} \phi(\overline{x},\overline{a}) \wedge \bigwedge_{\overline{a} \in B \setminus I} \neg \phi(\overline{x},\overline{a})$ holds.
	We say that $A$ is \textit{shuttered} by the formula $\phi(\overline{x},\overline{y})$ if the above condition is satisfied.
	We say that $T$ has \textit{NIP} if it does not have IP. 
\end{definition}

\begin{proposition}\label{prop:ip}
	$\randgT$ has IP.
\end{proposition}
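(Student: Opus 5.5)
We will exhibit the formula $\phi(x,y):=R(x,y)$ together with an infinite set of parameters of a single color, and show that this set is shattered using axiom (\ref{cond:random}) of $\randgT$. Since $\randgT$ is complete by Theorem \ref{thm:equaliyt_theory}, it is enough to work inside one model $M$ of $\randgT$, for instance $\randgF$.

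First, we produce infinitely many vertices of color $1$ in $M$. We apply (\ref{cond:random}) with $k=1$, taking the parameter tuples empty except for previously chosen elements. Because the tuple convention in the paper requires distinct entries, each new witness must be a new vertex. More precisely, suppose $a_1,\ldots,a_\ell$ of color $1$ have been found. Take any vertex $b$ of color $2$; such a vertex exists by (\ref{cond:random}) with $k=2$ and all $m_i=0$. Then apply (\ref{cond:random}) with $k=1$, $m_2=1$, $\overline{x^2}=(b)$, and use both values of $\sigma$. This gives a vertex of color $1$ adjacent to $b$ and one separated from $b$, so there are at least two color-$1$ vertices. To get unboundedly many, the simplest route is to note that the axioms force infinitely many vertices of each color. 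Alternatively, we can use $\omega$-saturation of the countable Fra\"iss\'e limit: every finite graph in $\colorgK$ embeds into $\randgF$, in particular the graph consisting of $\ell$ isolated vertices of color $1$. Let $A=\{a_j : j<\omega\}$ be an infinite set of vertices of color $1$.

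Next, we check shattering. Let $B\subseteq A$ be finite, enumerated as $(a_1,\ldots,a_{m})$, and let $I\subseteq B$. Apply (\ref{cond:random}) with $k=2$, $m_1=m$, $\overline{x^1}=(a_1,\ldots,a_m)$, and $m_i=0$ for $i\neq 1$. Take $\sigma(1,j)=1$ if $a_j\in I$ and $\sigma(1,j)=0$ otherwise. This is legitimate because $1\neq k$. The axiom yields $y$ of color $2$ with $R(y,a)$ for $a\in I$ and $\neg R(y,a)$ for $a\in B\setminus I$. By symmetry of $R$, this is exactly $\exists x\,\bigwedge_{a\in I}\phi(x,a)\wedge\bigwedge_{a\in B\setminus I}\neg\phi(x,a)$. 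Hence $A$ is shattered by $\phi$, and $\randgT$ has IP.

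The only mildly delicate step is securing an infinite monochromatic set, which the shattering step needs in order to apply the axiom to parameters of a single color. The cleanest way is to invoke the $\colorgK$-saturation of $\randgF$ established in the proof of Theorem \ref{thm:equaliyt_theory}.
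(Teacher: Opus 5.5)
Your proof is correct and is essentially the paper's argument: axiom (\ref{cond:random}) yields a witness of a fixed color realizing any prescribed adjacency pattern to finitely many vertices of other colors, so $R$ shatters such a set. The paper shatters all vertices of color $\neq 1$ with witnesses of color $1$, while you shatter color-$1$ vertices with witnesses of color $2$; your check, via the universality of $\randgF$, that the shattered set is infinite covers a point the paper leaves implicit, though your earlier remark that the tuple convention forces new witnesses is not a valid argument and should be dropped.
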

\begin{proof}
	Let $M$ be a model of $\randgT$.
	Let $A$ be the subgraph of $M$ consisting of vertices whose color is not $1$.
	Let $B$ be an arbitrary finite subset of $A$ and $I$ be an arbitrary subset of $B$.
	By (\ref{cond:random}), we can find a vertex $y_I$ whose color is $1$ such that $\mathcal M \models R(x,y_I) \Leftrightarrow x \in I$ for $x \in B$.
	This means $A$ is shuttered by the formula $R(\cdot,\cdot)$.
	Therefore, $\randgT$ has IP.
\end{proof}

\begin{definition}
	A formula $\phi(\overline{x},\overline{y})$ has the \textit{tree property} with respect to $k$ if there is a tree of parameters $(\overline{a}_s\;|\; \emptyset \neq s \in \omega^{<\omega})$ such that $(\phi(\overline{x},\overline{a}_{s{}^\frown i})\;|\; i<\omega)$ is $k$-inconsistent for every $s \in \omega^{<\omega}$, and $\{\phi(\overline{x},\overline{a}_s)\;|\; \emptyset \neq s \subseteq \sigma\}$ is consistent for every $\sigma \in \omega^{\omega}$. 
	A theory $T$ is \textit{simple} if there is no formula with the tree property.
\end{definition}

\begin{proposition}\label{prop:simple}
	$\randgT$ is simple.
\end{proposition}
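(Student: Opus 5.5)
The plan is to prove simplicity via the Kim--Pillay characterization. We exhibit an independence relation on small subsets of a monster model $\mathbb M$ of $\randgT$ and verify the axioms: invariance, monotonicity, base monotonicity, transitivity, normality, symmetry, full existence, finite character, local character, and the independence theorem over models. Then $\randgT$ is simple and the relation coincides with forking independence. Since $\randgT$ eliminates quantifiers (Proposition \ref{prop:model_compansion}) and $\colorgL$ is relational, algebraic closure is trivial: $\myacl(A)=A$. This is because by (\ref{cond:random}) and quantifier elimination, every nonalgebraic quantifier-free type over $A$ has infinitely many realizations. We therefore take the candidate relation
\[A \myindzero_C B \iff A\cap B\subseteq C,\]
which is the free-amalgamation independence used for the random graph.

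Most axioms are routine set-theoretic checks.
\begin{itemize}
\item Invariance, monotonicity, base monotonicity, transitivity, normality, symmetry and finite character follow directly from the definition in terms of intersections.
\item Local character holds with bound $|A|$: take $C_0=A\cap B$.
\item For full existence, given $A,B,C$ we realize $\mytp(A/C)$ by some $A'$ disjoint from $BC\setminus C$. By quantifier elimination, $\mytp(A/C)$ is determined by colors and edges among $AC$. We then form the free amalgam $AC\otimes_C BC$ (Lemma \ref{lem:pre_amalgamation}, extended to infinite structures by compactness), which is a model of $\colorgT$. It embeds into $\mathbb M$ over $BC$: the model companion property plus saturation and quantifier elimination make $\mathbb M$ existentially closed and saturated, so any model of $\colorgT$ extending $BC$ of small size embeds over $BC$.
\end{itemize}

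The key step is the independence theorem over a model $M$ (indeed over any set $C$). We are given $A\myindzero_C B$, $\overline a\myindzero_C A$, $\overline b\myindzero_C B$ with $\mytp(\overline a/C)=\mytp(\overline b/C)$. We must find $\overline d$ realizing $\mytp(\overline a/CA)\cup\mytp(\overline b/CB)$ with $\overline d\myindzero_C AB$. We may assume $\overline a,\overline b$ disjoint from $AB\setminus C$; elements in $C$ are handled trivially.

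By quantifier elimination, the required type is specified as follows:
\begin{itemize}
\item a set of new vertices $\overline d$ with the colors of $\overline a$;
\item the same edges among themselves and to $C$ as $\overline a$ (equivalently $\overline b$);
\item edges to $A\setminus C$ copied from $\overline a$, and edges to $B\setminus C$ copied from $\overline b$;
\item anything, say no edges, between $\overline d$ and any remaining part.
\end{itemize}
Because $A\cap B\subseteq C$, the sets $A\setminus C$ and $B\setminus C$ are disjoint, so these prescriptions never conflict.

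The only thing to check is that the resulting structure is a model of $\colorgT$. This needs just axiom (5): each prescribed edge joins vertices of distinct colors, since it was copied from an existing edge in $\mathbb M$. This is exactly where the absence of a forbidden $\compgraph$ constraint makes everything free. The structure $ABC\overline d$ is therefore a model of $\colorgT$ extending $ABC$, and as before it embeds into $\mathbb M$ over $ABC$, giving the desired $\overline d$.

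The main obstacle I expect is bookkeeping rather than combinatorics. First, we must justify that an arbitrary small model of $\colorgT$ extending a small subset of $\mathbb M$ embeds over it. This follows from quantifier elimination, finite satisfiability by extension axioms (\ref{cond:random}), and saturation. Second, we must confirm the precise formulation of the Kim--Pillay theorem, including stationarity-free independence over models. If citing Kim--Pillay is undesirable, the fallback is a direct argument: a formula $\phi(\overline x,\overline a)$ divides over $C$ only if it forces some $x_i\in\overline a\setminus C$. From this we would deduce that forking equals $\myindzero$ and that local character holds, which excludes the tree property.
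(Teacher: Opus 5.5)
Your proposal is correct and follows essentially the same route as the paper: the Kim--Pillay theorem with the naive independence $A\cap B\subseteq C$, existence via a free amalgam embedded over the base, and the independence theorem by copying edges from $\overline a$ over $A\setminus C$ and from $\overline b$ over $B\setminus C$, which cannot conflict because these sets are disjoint. The one difference is the embedding step: you embed the resulting small model of $\colorgT$ into $\mathbb M$ via saturation and the extension property, whereas the paper builds the new tuple coordinate by coordinate from axiom (6), so your version is a slightly cleaner justification.
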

\begin{proof}
	Let $\mathbb M$ be a monster model of $\randgT$.
	Using the Kim-Pillay theorem \cite[Theorem 7.3.13]{TZ}, we show that $\randgT$ is simple.
	
	Let $A,B,C$ be a small subset of $\mathbb M$.
	We define the naive independence $A\myindzero_CB$ by $A \cap B \subseteq C$.
	We have only to show that $A\myindzero_CB$ satisfies conditions (a) through (f) in \cite[Theorem 7.3.13]{TZ}.
	
	Condition (a) called monotonicity and transitivity says that \[
	\overline{a}\myindzero_ABC
	 \Leftrightarrow  \overline{a}\myindzero_AB\text{ and } \overline{a}\myindzero_{AB}C.\]
	
	Condition (b) called symmetry says that ${\overline{a}}\myindzero_{A}{\overline{b}} \Leftrightarrow {\overline{b}}\myindzero_{A}{\overline{a}}$.
	
	Condition (c) called finite character says that ${\overline{a}}\myindzero_{A}{B}$ holds if ${\overline{a}}\myindzero_{A}{\overline{b}}$ for every tuple $\overline{b}$ from $B$ of finite length.
	
	These conditions are obviously satisfied.
	
	Condition (d) called local character says that there exists a cardinal $\kappa$, such that for all $\overline{a}$ and $B$, there exists $B_0 \subseteq B$ of cardinality less than $\kappa$ such that ${\overline{a}}\myindzero_{B_0}{B}$.
	We put $B_0 = \overline{a} \cap B$, then the relation $\myindzero$ satisfies local character with $\kappa=\omega$.
	
	Condition (e) called existence says that, for all $\overline{a}, A, C$, there exists $\overline{a'}$ such that $\mytp(\overline{a'}/A)=\mytp(\overline{a}/A)$ and ${\overline{a'}}\myindzero_{A}{C}$.
	Enumerate $\overline{a}=(a_1,\ldots,a_l)$. 
	By permuting the coordinate, we may assume that there exists $0 \leq m \leq l$ such that $a_i \in A$ if and only if $1 \leq i \leq m$.
	%Put $a'_i:=a_i$ for $1 \leq i \leq m$.
	Let $1 \leq k(i) \leq n$ be the color of $a_i$ for $m<i \leq l$.
	%For every finite subset $A_0$ of $A$ and $m<i \leq n$, we define $C(A_0):=\{b \in A_0\;|\; \mathbb M \models R(b,a_i)\}$ and $N(A_0):=A_0 \setminus C(A_0)$.
	
	Fix finite subsets $A_0 \subseteq A$ and $C_0 \subseteq C$.
	Put $c_i:=a_i$ for $1 \leq i \leq m$, and find $c_i$ for $m<i \leq l$ so that $\mathbb M \models C_{k(i)}(c_i)$, and $\mathbb M \models R(x,c_i)$ if and only if $\mathbb M \models R(x,a_i)$ for every $x \in A_0 \cup \{c_j\;|\;j<i\}$ by induction on $i$.
	It is possible by condition (\ref{cond:random}).
	Let $A'_0=A_0c_1\ldots c_l$.
	Consider the free amalgam $A'_0 \otimes_{A_0} A_0C_0$.
	There exists an embedding $A'_0 \otimes_{A_0} A_0C_0 \hookrightarrow \mathbb M$ fixing $A_0C_0$ by the axiom of $\randgT$.
	Therefore, the set consisting of atomic $\colorgL(A)$-formulas, their negations satisfied by $\overline{a}$ and the formulas of the form $x_i \neq c$ for $c \in C$ and $m<i \leq l$ is a partial type $q(\overline{x})$.
		
	Let $\overline{a'}=(a'_1,\ldots,a'_l)$ be a realization of $q(\overline{x})$ in $\mathbb M$.
	We have $\overline{a'} \cap C \subseteq A$. 
	For every quantifier-free $\colorgL(A)$-formula $\phi(\overline{x})$, $\mathbb M \models \phi(\overline{a'})$ if and only if $\mathbb M \models \phi(\overline{a})$.
	Since $\randgT$ admits quantifier elimination, we have $\mytp(\overline{a'}/A)=\mytp(\overline{a}/A)$.
	
	Condition (f) called independence over models says that, for every model $ M$, for every tuples $\overline{a}, \overline{b}, \overline{a'}, \overline{b'}$, if ${\overline{a}}\myindzero_{M}{\overline{b}}$, ${\overline{a}}\myindzero_{M}{\overline{a'}}$, ${\overline{b}}\myindzero_{M}{\overline{b'}}$ and $\mytp(\overline{a'}/M) =\mytp(\overline{b'}/M)$, then there exists a tuple $\overline{c}$ such that $\mytp(\overline{c}/M\overline{a}) =\mytp(\overline{a'}/M\overline{a})$, $\mytp(\overline{c}/M\overline{b}) =\mytp(\overline{b'}/M\overline{b})$ and ${\overline{c}}\myindzero_{M}{\overline{a}\overline{b}}$.
	
	Let $\overline{a'}=(a'_1,\ldots, a'_l)$ and $\overline{b'}=(b'_1,\ldots, b'_l)$.
	Observe that, for every $1 \leq i \leq l$ and $1 \leq j \leq n$, we have $\mathbb M \models C_j(a'_i) \Leftrightarrow \mathbb M \models C_j(b'_i)$.
	We also have $a'_i \in M$ if and only if $b'_i \in M$, and if $a'_i \in M$, we have $a'_i=b'_i$.
	We also have $\mathbb M \models R(m,a'_i) \leftrightarrow R(m,b'_i)$ for $1 \leq i \leq l$ and $m \in M$.
	In the same manner as the proof for condition (e), we can construct $\overline{c}=(c_1,\ldots, c_n)$ so that $c_i=a'_i=b'_i$ if $a'_i \in M$, $\mathbb M\models C_k(c_i) \leftrightarrow C_k(a'_i)$, $\mathbb M \models R(m,c_i) \leftrightarrow R(m,b'_i)$ for $m \in M$,  $\mathbb M \models R(\alpha,c_i) \leftrightarrow R(\alpha,a'_i)$ for $\alpha \in \overline{a}$, $\mathbb M \models R(\beta,c_i) \leftrightarrow R(\beta,b'_i)$ for $\beta \in \overline{b}$, and $c_i \notin \overline{a}\overline{b} \setminus M$ unless $a'_i \in M$.
	By quantifier elimination, we have $\mytp(\overline{c}/M\overline{a}) =\mytp(\overline{a'}/M\overline{a})$, $\mytp(\overline{c}/M\overline{b}) =\mytp(\overline{b'}/M\overline{b})$ and ${\overline{c}}\myindzero_{M}{\overline{a}\overline{b}}$.
\end{proof}

\section{Generic $\compgraph$-free $n$-partite graphs}\label{sec:free}

Let $n>1$ be a positive integer, and let $\overline{m}=(m_1,\ldots,m_n)$ be a sequence of positive integers.
We fix $n$ and $\overline{m}$ in this section.
Let $\compgraph$ be the graph consisting of $m_i$ vertices of color $i$ for $1 \leq i \leq n$ and every two vertices with different colors are adjacent.
For $I \subseteq \{1,\ldots,n\}$, let $\compgraph\langle I \rangle$ be the complete graph consisting of $m_c$ vertices of color $c$ for $c \in I$.
In other words, $\compgraph\langle I \rangle$ is a subgraph of $\compgraph$ constructed by removing all the vertices in $\compgraph$ of color $\notin I$.
For $1 \leq c \leq n$, $\subcompgraph^{c}$ is the graph constructed from $\compgraph$ by removing all the vertices of color $c$, that is, $\subcompgraph^{c} := \compgraph\langle \{1,\ldots,n\} \setminus \{c\} \rangle$.
Let $\frcolgT$ be the theory of \textit{$\compgraph$-free} $n$-partite graphs, that is, every model of $\frcolgT$ does not have a copy of $\compgraph$ as a subgraph.

Let $\frcolgK$ be the collection of finite models of $\frcolgT$ under the identification of structures isomorphic to each other.
\begin{lemma}\label{lem:bipartite}
	$\frcolgK$ does not possess the amalgamation property if $m_1 \geq 2$.
\end{lemma}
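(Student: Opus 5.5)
The plan is to exhibit an explicit amalgamation problem $A \subseteq B_1$, $A \subseteq B_2$ in $\frcolgK$ that has no solution. The idea is to add two new vertices of color $1$, each completing an almost-copy of $\compgraph$. A witness vertex in $A$ will stop the two new vertices from being identified, so any amalgam contains both, and together they produce a copy of $\compgraph$. The hypothesis $m_1 \geq 2$ is used so that two missing color-$1$ vertices fit into the count.

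\textbf{Construction.} Let $D$ be a copy of $\subcompgraph^{1}$, i.e.\ $m_c$ vertices of each color $c \neq 1$, pairwise adjacent across different colors. Let $a_1,\ldots,a_{m_1-2}$ be vertices of color $1$, each adjacent to every vertex of $D$; this makes sense because $m_1 \geq 2$. Let $e$ be a further vertex of color $2$ that is separated from everything. Put $A = D \cup \{a_1,\ldots,a_{m_1-2}\} \cup \{e\}$. Let $B_1 = A \cup \{b_1\}$, where $b_1$ has color $1$ and is adjacent to all of $D$ and to $e$. Let $B_2 = A \cup \{b_2\}$, where $b_2$ has color $1$ and is adjacent to all of $D$ but separated from $e$.

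\textbf{Membership in $\frcolgK$.} Any copy of $\compgraph$ contains exactly $m_1$ vertices of color $1$, all adjacent to $m_c$ vertices of each other color. In $B_1$ there are only $m_1-1$ vertices of color $1$, so $B_1$ cannot contain $\compgraph$, and the same holds for $B_2$ and $A$. The vertex $e$ plays no role here: a complete subgraph containing $e$ has at most two vertices, whereas $|\compgraph| \geq m_1 + m_2 \geq 3$. All structures are clearly $n$-partite graphs, so $A, B_1, B_2 \in \frcolgK$.

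\textbf{Failure of amalgamation.} Suppose $C \in \frcolgK$ and $g_i : B_i \to C$ are embeddings agreeing on $A$. Since $g_1(b_1)$ is adjacent to $g_1(e) = g_2(e)$ and $g_2(b_2)$ is not, we get $g_1(b_1) \neq g_2(b_2)$. Neither of them lies in $g(A)$, because embeddings are injective. Consider the set consisting of $g(D)$, $g(a_1),\ldots,g(a_{m_1-2})$, $g_1(b_1)$ and $g_2(b_2)$. It has exactly $m_1$ vertices of color $1$, all adjacent to every vertex of $g(D)$. The vertices of $g(D)$ are pairwise adjacent across colors, since $D$ embeds as a copy of $\subcompgraph^1$. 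Hence this set spans a copy of $\compgraph$ inside $C$, contradicting $C \in \frcolgK$.

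There is no serious obstacle. The only delicate point is ruling out that the amalgam identifies $b_1$ with $b_2$, and the vertex $e$ handles this. One should also double-check that adding $e$ and its edge to $b_1$ cannot itself create a forbidden subgraph, which is the size count given above.
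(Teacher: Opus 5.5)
Your proof is correct and is essentially the paper's argument: the paper uses the same construction (a copy of $K_{m_1-2,m_2}$ plus an isolated color-$2$ witness $p$, with $q_1$ adjacent to $p$ and $q_2$ not). The paper writes this only for $n=2$ and says the general case is ``the same manner'', whereas you carry it out directly for arbitrary $n$, with $D$ a copy of $\subcompgraph^{1}$ in place of the $m_2$ color-$2$ vertices.
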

\begin{proof}
	We consider the case in which $n=2$.
	We can prove it for $n>2$ in the same manner.
	
	Let $C$ be the disjoint union of a vertex $p$ of color $2$ and a copy of the complete graph $K_{m_1-2,m_2}$.
	We suppose $p$ is not adjacent to any vertices in $C$.
	Let $A_i$ be the bipartite graph by adding a new vertex $q_i$ of color $1$ to $C$ for $i=1,2$.
	We join $q_1$ with every vertices of color $2$ in $C$ by edges, and join $q_2$ with every vertices of color $2$ except $p$ by edges.
	Assume for contradiction that there exists an amalgamation $D$ of $A_1$ and $A_2$ over $C$ in $\frcolgK$.
	Let $f_i:A_i \to D$ be the embedding over $C$.
	$f_1(q_1)$ is adjacent to $p$, but $f_2(q_2)$ is not.
	Therefore, we have $f_1(q_1) \neq f_2(q_2)$.
	$\{f_1(q_1), f_2(q_2)\} \cup (C \setminus \{p\})$ is isomorphic to $K_{m_1,m_2}$, which is absurd.
\end{proof}

Conant and Kruckman examine the model companion $\randfrcolgT$ of $\frcolgT$ in \cite{CK} when $n=2$.
We want to generalize some of their results to the case of $n>2$. 

Let $\frcolgTc$ be the extension of $\frcolgT$ such that a model $G$ of $\frcolgT$ is a model of $\frcolgTc$ if and only if, for every $1 \leq c \leq n$ and every tuples $\overline{v}_j$ of vertices of color $j$ of length $m_j$ for $1 \leq j \leq n$ with $j \neq c$, the following condition is satisfied: 
\begin{enumerate}
	\item[(*)] If $(\overline{v}_j\;|\; 1 \leq j \leq n, j \neq c)$ is a copy of $\subcompgraph^c$, there exist exactly $m_c-1$ vertices $y_1,\ldots, y_{m_c-1}$ such that every $y_k$ is fully adjacent to $(\overline{v}_j\;|\; 1 \leq j \leq n, j \neq c)$ for $1 \leq k < m_c$. 
\end{enumerate}

\begin{lemma}\label{lem:existence_free_completion}
	Let $G$ be a model of $\frcolgT$.
	There exists a model $F(G)$ of $\frcolgTc$ which is an extension of $G$ as an $n$-partite graph. 
\end{lemma}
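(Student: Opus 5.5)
We construct $F(G)$ as the union of an $\omega$-chain $G=G_0\subseteq G_1\subseteq\cdots$ of models of $\frcolgT$. Each $G_{i+1}$ is obtained from $G_i$ by adding new vertices that repair every deficient copy of $\subcompgraph^c$ in $G_i$.

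First, condition (*) can only be about \emph{at least} $m_c-1$ vertices. In any model of $\frcolgT$, a copy $S$ of $\subcompgraph^c$ has at most $m_c-1$ vertices of color $c$ fully adjacent to it, since $m_c$ such vertices together with $S$ would form a copy of $\compgraph$. Here "fully adjacent" forces such a vertex $y$ to be of color $c$: $y$ is adjacent to every vertex of $S$, and $S$ has vertices of every color other than $c$. So it suffices to produce an extension that is still $\compgraph$-free and in which every copy of $\subcompgraph^c$ has at least $m_c-1$ common neighbours of color $c$.

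\emph{One step.} Given $G_i\models\frcolgT$, fix $1\le c\le n$ and a copy $S$ of $\subcompgraph^c$ in $G_i$. Let $d(c,S)$ be the number of color-$c$ vertices of $G_i$ fully adjacent to $S$, so $d(c,S)\le m_c-1$. Add $m_c-1-d(c,S)$ new vertices of color $c$, each adjacent to exactly the vertices of $S$. Do this simultaneously and independently for all pairs $(c,S)$; new vertices are never adjacent to each other. Call the result $G_{i+1}$. It is clearly an $n$-partite graph, since the vertices of $S$ all have colors $\neq c$.

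The main point is that $G_{i+1}$ is still $\compgraph$-free. Suppose $K\subseteq G_{i+1}$ is a copy of $\compgraph$. Since $G_i$ is $\compgraph$-free, $K$ contains a new vertex $y$, added for some pair $(c,S)$.
\begin{enumerate}
\item The neighbourhood of $y$ is exactly $S$. In $K$, $y$ is adjacent to all $m_j$ vertices of each color $j\neq c$, and $S$ has exactly $m_j$ vertices of color $j$. Hence $K\setminus C_c=S$.
\item The $m_c$ vertices of color $c$ in $K$ are all fully adjacent to $S$. Each is either an old vertex, and there are only $d(c,S)$ of these, or a new vertex added for some pair $(c,S')$ with $S\subseteq N(y')=S'$. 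Since $|S|=|S'|$, this forces $S'=S$.
\item So at most $d(c,S)+(m_c-1-d(c,S))=m_c-1$ such vertices exist, which is a contradiction.
\end{enumerate}

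\emph{The limit.} Put $F(G)=\bigcup_{i<\omega}G_i$. It is $\compgraph$-free, because any copy of $\compgraph$ is finite and so lies in some $G_i$. Given any copy $S$ of $\subcompgraph^c$ in $F(G)$, it lies in some $G_i$, and by construction it has at least $m_c-1$ fully adjacent color-$c$ vertices in $G_{i+1}$. By the first remark it has exactly $m_c-1$, so $F(G)\models\frcolgTc$.

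The only delicate step is the freeness check in the one-step construction: that a new vertex's neighbourhood pins down the whole non-$c$ part of the clique, and that new vertices for different $S'$ cannot pile up on the same $S$.
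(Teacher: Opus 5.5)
Your proof is correct and is essentially the paper's construction: an $\omega$-chain in which fresh color-$c$ vertices are attached exactly to each deficient copy of $\subcompgraph^c$, followed by a union. You differ only in making all repairs at once rather than by the paper's color-by-color transfinite enumeration, and in actually verifying the $\compgraph$-freeness step that the paper calls obvious. One small caveat: your remark that a fully adjacent vertex must have color $c$ does not follow from the paper's literal definition of ``fully adjacent''. Still, reading (*) as counting color-$c$ vertices is clearly the intended meaning, and it is the reading the paper uses later.
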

\begin{proof}
	Put $G_0:=G$.
	We construct an extension $G_k$ of $ G$ by induction on $k$.
	Let $1 \leq l \leq n$.
	As an intermediate step, for every $1 \leq l \leq n$, we construct an extension $G_{k,l}$ of $ G_k$ by only adding vertices of color $l$.
	For convenience, we put $ G_{k,0}:= G_k$.
	
	Let $\mathfrak C_i$ be the collection of tuples $(\overline{v}_j\;|\; 1 \leq j \leq n, j \neq i)$ of tuples $\overline{v}_j$ of vertices in $G_{k,i-1}$ of color $j$ of length $m_j$ which makes a copy of $\subcompgraph^i$.
	Enumerate elements in $\mathfrak C_i$.
	We have $\mathfrak C_i=((\overline{v}^{\lambda}_j\;|\; 1 \leq j \leq n, j \neq i)\;|\; \lambda<|\mathfrak C_i|)$.
	We construct $G_{k,i,\lambda}$ so that $G_{k,i,\lambda} \subseteq G_{k,i,\lambda'}$ if $\lambda<\lambda'$ as follows:
	For $(\overline{v}^{\lambda}_j\;|\; 1 \leq j \leq n, j \neq i)$ in $\mathfrak C_i$, we add new vertices $\overline{v}_{k,i,\lambda}$ of color $i$ so that condition (*) is satisfied.
	We do not join new vertices with the vertices which do not appear in $(\overline{v}^{\lambda}_j\;|\; 1 \leq j \leq n, j \neq i)$.
	Put $G_{k,i,\lambda}=\bigcup_{\lambda'<\lambda} G_{k,i,\lambda'} \cup \overline{v}_{k,i,\lambda}$.
	It is obvious that $G_{k,i,\lambda}$ is $\compgraph$-free if $ G_{k,i,\lambda'}$ is so for $\lambda'<\lambda$.
	By transfinite induction on $\lambda<|\mathfrak C|$, $G_{k,i,\lambda}$ is $\compgraph$-free.

	We put $G_{k,i}:= \bigcup_{\lambda<|\mathfrak C|}G_{k,i,\lambda}$ and $G_k=\bigcup_{i=1}^n G_{k,i}$.
	Put $F( G):=\bigcup_{k=0}^{\infty}  G_k$.
	$F(G)$ is $\compgraph$-free.
	It is easy to prove that $F(G)$ satisfies condition (*).
	Therefore, $F(G)$ is a model of $\frcolgTc$.
\end{proof}

Let $B$ be a model of $\colorgT$.
Let $\mydiag_B(\overline{z})$ denote the set of all atomic and negated atomic formulas which are true in  $B$.
If $A$ is a substructure of $B$ and the variables $\overline{x}$ and $\overline{y}$ correspond to the vertices in $A$ and $B \setminus A$, respectively, we denote $\mydiag_{B}(\overline{z})$ by $\mydiag_{A,B}(\overline{x};\overline{y})$.
If $A$ is finite, we identify $\mydiag_{A}(\overline{z})$ with $\bigwedge_{\phi \in \mydiag_{A}(\overline{z})}\phi$.
We call a pair $(A,B)$ a \textit{safe pair} if, $A$ is a subgraph of a $\compgraph$-free graph $B$ and the following condition is satisfied:
\begin{itemize}
	\item For every $p \in B \setminus A$, $\{v \in A\;|\; B \models R(v,p)\}$ does not contain a copy of $\subcompgraph^c$, where $c=\mycol(p)$.
%	\item $|B \setminus A|=1$ and, for a unique vertex $p$ in $B \setminus A$, the graph $\{q \in A\;|\; B \models R(p,q)\}$ does not contain a copy of $\subcompgraph^c$, where $c$ is the color of $p$.
%	\item $|B \setminus A|>1$ and there exists an enumeration $v_1,\ldots,v_k$ of $B \setminus A$ such that $(Av_1\ldots v_{i-1},Av_1,\ldots v_i)$ is a safe pair for each $1 \leq i \leq k$.
\end{itemize}
%For a set of formulas $\Delta(\overline{x},\overline{y})$, the subset of $\Delta$ containing those formulas which only mention variables in $\overline{x}$ is denoted by $\widehat{\Delta}(\overline{x})$. 

Let $\randfrcolgT$ be the extension of $\frcolgTc$ by adding all the sentences of the form:
$$\forall \overline{x}\ \left(\mydiag_{A}(\overline{x}) \rightarrow \exists \overline{y}\ \mydiag_{A,B}(\overline{x};\overline{y})\right)$$ for every safe pair $(A,B)$ of finite graphs.
%From now on, $\mathbb M$ is a monster model of $\randfrcolgT$.

\begin{lemma}\label{lem:existence_free_completion2}
	Let $H$ be a model of $\frcolgT$.
	There exists a model $G(H)$ of $\randfrcolgT$ which is an extension of $H$ as an $n$-partite graph. 
\end{lemma}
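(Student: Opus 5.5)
We build $G(H)$ as the union of an $\omega$-chain, in the same spirit as Proposition \ref{prop:model_compansion} and Lemma \ref{lem:existence_free_completion}. Put $H_0 := F(H)$, using Lemma \ref{lem:existence_free_completion}, so that $H_0 \models \frcolgTc$. Given $H_k \models \frcolgTc$, we first form $H_k'$ by realizing every instance of the extension axioms over $H_k$. Concretely, consider all pairs $(\overline{a},(A,B))$ where $(A,B)$ is a finite safe pair and $\overline{a}$ is a copy of $A$ in $H_k$. For each such pair we add a fresh copy of $B \setminus A$, attached to $\overline{a}$ as prescribed by $\mydiag_{A,B}$. Distinct added copies are fully separated from each other, and each is fully separated from $H_k \setminus \overline{a}$. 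In other words, we take iterated free amalgams of $H_k$ with the graphs $B$ over $A$. We then set $H_{k+1} := F(H_k')$ and $G(H) := \bigcup_k H_k$.

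The main step is to check that $H_k'$ is still $\compgraph$-free. Suppose $K$ is a copy of $\compgraph$ in $H_k'$, and suppose it meets some added copy of $B\setminus A$. Since added copies are fully separated from everything outside their own $B$, every vertex of $K$ lies in that single copy of $B$, except for vertices having no neighbours in $K$. But $K$ is complete multipartite with all $m_i \geq 1$ and $n \geq 2$, so every vertex of $K$ has a neighbour in $K$. Hence $K \subseteq B$, which contradicts $B$ being $\compgraph$-free. Otherwise $K \subseteq H_k$, which is impossible. Passing to $F(H_k')$ preserves $\compgraph$-freeness by Lemma \ref{lem:existence_free_completion}.

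Next we verify the axioms in the union. $\compgraph$-freeness is a universal property and so holds in $G(H)$. Any finite configuration lies in some $H_k$, so the extension axioms for safe pairs hold: a copy of $A$ in $G(H)$ lies in some $H_k$, and the witness is added in $H_k'$.

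Condition (*) is the delicate point, and I expect it to be the main obstacle. It requires \emph{exactly} $m_c-1$ common full neighbours of color $c$ for each copy of $\subcompgraph^c$, so later stages must never add extra such neighbours. There are two sources of possible extra neighbours. First, a vertex $p$ added in $H_k'$ from a safe pair could be fully adjacent to a copy $\overline{v}$ of $\subcompgraph^c$ with $\mycol(p)=c$. Safety forbids this when $\overline{v} \subseteq A$. If $\overline{v}$ meets the rest of $B$, then $\overline{v}\subseteq B$, and one needs $B$ to contain only $m_c-1$ such vertices. This is not automatic, so I would first try to strengthen the construction: replace each $B$ by its completion inside the chain, or better, verify that $H_k$ already contains $m_c-1$ such neighbours, which together with $p$ would form a $\compgraph$ in the amalgam. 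A copy of $\compgraph$ that meets the new copy would, however, have to lie in $B$ by the argument above, so this does not immediately yield a contradiction. A cleaner fix is to restrict the pairs used at stage $k$ to those $B$ that themselves satisfy (*) relative to their own copies, or to note that (*) failing inside $B$ only creates copies of $\subcompgraph^c$ which the next $F$ step completes. Second, applying $F$ adds new neighbours only to copies of $\subcompgraph^c$ that were newly created, and I would check that it never adds to copies already saturated. Once the "exactly" clause is secured at every stage, it persists in the union, since each stage is an extension in which no existing copy gains new full neighbours. This shows $G(H) \models \randfrcolgT$.
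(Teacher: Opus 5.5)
Your construction (iterated free amalgams with $B$ over copies of $A$ for safe pairs $(A,B)$, then the completion $F$, then a union of the chain) is the same as the paper's. However, your proof that $H_k'$ is $\compgraph$-free is wrong. This is precisely the step where the safe-pair hypothesis has to be used, and your argument never uses it.

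If $u\in K$ lies in an added copy $B^*\setminus\overline a$, full separation only tells you that the $K$-neighbours of $u$ lie in $B^*$. A vertex $w\in K$ of the same colour as $u$ is not adjacent to $u$, and all of its $K$-neighbours may lie in $\overline a\subseteq H_k$. So $w$ can sit in $H_k\setminus\overline a$ (or in another added copy) even though it has neighbours in $K$, and ``every vertex of $K$ has a neighbour in $K$, hence $K\subseteq B$'' does not follow.

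In fact your argument applies verbatim to non-safe pairs, where the conclusion is false. For $m_c\ge 2$, let $A\subseteq H_k$ be a copy of $\subcompgraph^c$ and $B=A\cup\{p\}$ with $p$ of colour $c$ fully adjacent to $A$. Then $B$ is $\compgraph$-free. But $A$, $p$ and the $m_c-1$ full neighbours of $A$ provided by (*) in $H_k$ form a copy of $\compgraph$ in $H_k\otimes_A B$. This is exactly the configuration you run into in your last paragraph and dismiss by appealing to the faulty step.

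The repair is the paper's argument. Suppose $K$ meets $B^*\setminus\overline a$ in $u$ and also contains some $w\notin B^*$. Then $u$ and $w$ are non-adjacent, hence share a colour $c$. Every vertex of $K$ of colour $\neq c$ is adjacent to both, so it lies in $B^*$ but not in $B^*\setminus\overline a$, i.e.\ in $\overline a$. Thus the neighbourhood of $u$ in $\overline a$ contains a copy of $\subcompgraph^c$, contradicting safety. Otherwise $K\subseteq B^*\cong B$, contradicting that $B$ is $\compgraph$-free.

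The same use of safety settles condition (*), which you leave hedged among several fixes. Restricting to $B$ satisfying (*) would, moreover, endanger the extension axioms for the other safe pairs. The needed points are these:
\begin{itemize}
\item An added vertex of colour $c$ has its neighbourhood in $H_k$ inside $\overline a$. So, by safety, it is not fully adjacent to any copy of $\subcompgraph^c$ lying in $H_k$.
\item $F$ attaches each new vertex to exactly one copy and adds nothing to copies that are already saturated.
\item A copy not contained in $H_k$ has at most $m_c-1$ full neighbours of colour $c$, because $H_k'$ is $\compgraph$-free. The next application of $F$ brings this to exactly $m_c-1$, after which it never changes.
\end{itemize}
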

\begin{proof}
	Consider the family $\mathfrak C$ of all the safe pairs $(A,B)$ of finite graphs under the identification of isomorphic graphs.
	Observe that $\mathfrak C$ is countable.
	Enumerate all the elements in $\mathfrak C$, say $(A_1, B_1), \ldots$.  
	
	Put $H_0:=H$.
	We define a model $H_k$ of $\frcolgT$ for $k>0$ by induction on $k$.
	As an intermediate step, for every $l \geq 1$, we construct a model $H_{k-1,l}$ of $\frcolgT$ such that, for each $\overline{a} \in  H_{k-1}$, there exists $\overline{b} \in  H_{k-1,l}$ such that $H_{k-1,l} \models (\mydiag_{ A_l}(\overline{a}) \rightarrow \mydiag_{ A_l, B_l}(\overline{a};\overline{b}))$.
	Enumerate tuples of elements $(\overline{a}_{\lambda}\;|\;\lambda < | H_{k-1}|)$ in $ H_{k-1}$ such that $ H_{k-1} \models \mydiag_{ A_l}(\overline{a}_{\lambda})$.
	We will construct $ N_{k-1,l,\lambda}$ so that 
	\begin{itemize}
		\item $N_{k-1,l,\lambda} \models \exists \overline{y}\ \mydiag_{A_l,B_l}(\overline{a}_{\lambda};\overline{y})$,
		\item $N_{k-1,l,\lambda}$ is a substructure of $N_{k-1,l,\lambda'}$ whenever $\lambda<\lambda'$, and 
		\item $N_{k-1,l,\lambda}$ is $\compgraph$-free.
	\end{itemize}
	Put $M_{k-1,l,\lambda}:=\bigcup_{\lambda'<\lambda} N_{k-1,l,\lambda'}$.
	Then, $M_{k-1,l,\lambda}$ is $\compgraph$-free.
	Let $N_{k-1,l,\lambda}:= M_{k-1,l,\lambda} \otimes_{\overline{a}_{\lambda}} B_l$ under the identification of $\overline{a}_{\lambda}$ with $ A_l$.
	We show that $ N_{k-1,l,\lambda}$ is $\compgraph$-free.
	Assume for contradiction that $ N_{k-1,l,\lambda}$ contains a copy $K$ of $\compgraph$.
	Put $ Q_1:= K \cap  M_{k-1,l,\lambda} \setminus \overline{a}_{\lambda}$, $ Q_2:= K \cap  B_l \setminus \overline{a}_{\lambda}$ and $ Q_3:= K \cap\overline{a}_{\lambda}$.
	By the definition of free amalgam, every vertex in $ Q_1$ is fully separated from $ Q_2$.
	Therefore, the vertices of $ Q_1$ and $ Q_2$ have an identical color, say $c$.
	We have $ Q_2 \neq \emptyset$ because $ M_{k-1,l,\lambda}$ is $\compgraph$-free.
	$ Q_3$ must contain a copy of $\subcompgraph^c$, and every vertex in $ Q_2$ is fully adjacent to this copy.
	This contradicts that $( A_l, B_l)$ is a safe pair.
	We have shown that the free amalgam $ N_{k-1,l,\lambda}$ is $\compgraph$-free.
	
	Put $H_{k,l}:=\bigcup_{\lambda}N_{k-1,l,\lambda}$.
	It is obvious that, for each $\overline{a} \in G_{k-1}$, there exists ${b} \in H_{k-1,l}$ such that $G_{k-1,l} \models (\mydiag_{ A_l}(\overline{a}) \rightarrow \mydiag_{A_l, B_l}(\overline{a};{b}))$.
	We put $ H_k:=F\left(\bigcup_{l=1}^{\infty} H_{k-1,l}\right)$, where $F(\cdot)$ is the operator defined in Lemma \ref{lem:existence_free_completion}.
	Put $G( H):=\bigcup_{k=0}^\infty  H_k$.
	It is a routine to prove that $G( H)$ is a model of $\randfrcolgT$.
	We omit the details.
\end{proof}

\begin{lemma}\label{lem:fr_model_complete}
	$\randfrcolgT$ is model complete.
\end{lemma}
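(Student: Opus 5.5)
I will use Robinson's test: it suffices to show that whenever $M \subseteq N$ are models of $\randfrcolgT$, every existential $\colorgL(M)$-sentence true in $N$ is true in $M$. Concretely, I fix a finite tuple $\overline{a}$ from $M$ and a finite tuple $\overline{b}$ from $N \setminus M$ witnessing a quantifier-free formula. It is enough to find $\overline{b'}$ in $M$ with $\overline{a}\overline{b'} \cong \overline{a}\overline{b}$ over $\overline{a}$.

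\textbf{Step 1: enlarge the base.} Let $A \subseteq M$ be finite containing $\overline{a}$, and put $B = A\overline{b} \subseteq N$, which is $\compgraph$-free. The plan is to choose $A$ so that $(A,B)$ is a safe pair. If this works, the axiom $\forall \overline{x}\,(\mydiag_A(\overline{x}) \rightarrow \exists \overline{y}\,\mydiag_{A,B}(\overline{x};\overline{y}))$ holds in $M$ and yields the required $\overline{b'}$, since $M \models \mydiag_A(A)$.

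\textbf{Step 2: ensure safety, using condition (*).} Suppose $p \in \overline{b}$ has color $c$ and its neighbourhood in $A$ contains a copy $K$ of $\subcompgraph^c$. Since $M \models \frcolgTc$, there are exactly $m_c-1$ vertices $y_1,\ldots,y_{m_c-1}$ in $M$ fully adjacent to $K$. The same $y_k$ are fully adjacent to $K$ in $N$, and $N \models \frcolgTc$ says there are exactly $m_c-1$ such vertices in $N$. Hence $p$ must be one of the $y_k$, so $p \in M$, contradicting $p \notin M$.

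So safety holds for any finite $A$, and no special choice is needed. Note that this uses (*) for $c$ ranging over colours and tuples of distinct vertices, and relies on "exactly" being the precise count in $N$ as well. The counting is over vertices of colour $c$; vertices of another colour cannot be fully adjacent to a full $\subcompgraph^c$ in a way that counts here, and I should double-check this, though (*) is stated for all $y$. I expect this uniqueness count to be the key step. The main obstacle is ensuring that the phrase "exactly $m_c-1$ vertices" in (*) is interpreted so that it is preserved upward to $N$. In particular, it must count vertices of colour $c$, since a vertex of colour $c$ adjacent to the whole copy is what matters for $\compgraph$.

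\textbf{Step 3: conclude.} By Step 2, $(A,B)$ is a safe pair of finite graphs. Applying the corresponding axiom of $\randfrcolgT$ in $M$ gives $\overline{b'} \in M$ realizing $\mydiag_{A,B}(A;\overline{y})$. Thus $M$ satisfies the existential sentence, and by Robinson's test $\randfrcolgT$ is model complete.
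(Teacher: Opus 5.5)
Your proof is correct and is essentially the paper's argument: Robinson's test plus the safe-pair axioms, with condition (*) in $M$ forcing any new vertex whose neighbourhood in $A$ contains a copy of $\subcompgraph^c$ to lie in $M$ (reading (*) as counting vertices of colour $c$, as the paper itself does). The differences are minor: you treat the whole tuple $\overline{b}$ at once, which the multi-vertex definition of safe pair permits and which is cleaner than the paper's reduction to a single vertex, and you get the contradiction from the exact count in $N$ rather than from $N$ being $\compgraph$-free.
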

\begin{proof}
	We use Robinson's test \cite[Lemma 3.2.7]{TZ}.
	Let $M_i$ be models of $\randfrcolgT$ for $i=1,2$ such that $M_1$ is a substructure of $M_2$.
	Let $\phi(\overline{x})$ be a quantifier-free $\colorgL(M_1)$-formula with $M_2 \models \exists \overline{x} \ \phi(\overline{x})$.
	We have to show that $M_1 \models \exists \overline{x} \ \phi(\overline{x})$.
	There exists a finite substructure $A$ of $M_1$ such that parameters appearing in $\phi(\overline{x})$ is contained in  $A$.
	Let $\overline{b}$ be the realization of $\phi(\overline{x})$ in $M_2$.
	Let $B:=A\overline{b}$ be the substructure of $M_2$.
	If we can find $\overline{b'} \in M_1$ such that $\overline{a}\overline{b'}$ is isomorphic to $B$, then we have $M_1 \models \phi(\overline{b'})$.
	We want to find such a $\overline{b'} \in M_1$.
	
	Let $k$ be the length of the tuple $\overline{b}$.
	By induction on $k$, we can easily reduce to the case in which $k=1$.
	If $(A,B)$ is a safe pair, we can find $\overline{b'} \in M_1$ such that $M_1 \models \phi(\overline{b'})$ by the definition of $\randfrcolgT$.
	Suppose not.
	We want to show that $\overline{b} \in M_1$.
	Let $c$ be the color of $\overline{b}$.
	Let $A':=\{q \in A\;|\; B \models R(\overline{b},q)\}$.
	The graph $A'$ contains a copy $K$ of $\subcompgraph^c$.
	Since $M_1$ is a model of $\frcolgTc$, there exist vertices $p_1,\ldots, p_{m_c-1}$ of color $c$ such that these vertices are fully adjacent to $K$.
	If $\overline{b} \notin M_1$, $M_2$ contains a copy of $\compgraph$, which is absurd.
	We have shown that $\overline{b} \in M_1$.
\end{proof}

In a special case, $\randfrcolgT$ admits quantifier elimination.
\begin{lemma}\label{lem:rand_qe}
	If $m_i=1$ for every $1 \leq i \leq n$, $\randfrcolgT$ admits quantifier elimination.
\end{lemma}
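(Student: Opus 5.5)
Since all $m_i=1$, the graph $\subcompgraph^c$ is a single complete $(n-1)$-partite graph with one vertex of each color $\neq c$, and condition (*) in $\frcolgTc$ asks for exactly $m_c-1=0$ extra vertices. So (*) is vacuous, $\frcolgTc=\frcolgT$, and a model of $\randfrcolgT$ is a $\compgraph$-free $n$-partite graph satisfying the extension axioms for safe pairs. A pair $(A,B)$ with $B=A\cup\{p\}$ is safe exactly when the neighbourhood of $p$ in $A$ contains no clique with one vertex of each color $\neq\mycol(p)$. The plan is the usual back-and-forth test: for models $M,N$ of $\randfrcolgT$ with $N$ $\omega$-saturated (or $|M|^+$-saturated), every partial isomorphism $f:A\to N$ from a finite (or small) substructure $A\subseteq M$ extends to any $a\in M$. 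This gives quantifier elimination, for instance via \cite[Theorem 3.2.5]{TZ} or the equivalent criterion with finite substructures.

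Fix $f:A\to N$ and $a\in M\setminus A$ of color $c$. Let $B=A\cup\{a\}\subseteq M$ and consider the pair $(A,B)$, identifying $A$ with $f(A)$. If $(A,B)$ is safe, the axiom $\forall\overline{x}\,(\mydiag_A(\overline{x})\to\exists y\,\mydiag_{A,B}(\overline{x};y))$ holds in $N$. Applying it to $f(A)$ gives $b\in N$ with $f(A)b\cong Ab$, and we extend $f$ by $a\mapsto b$. Here $B$ is $\compgraph$-free because it lies in $M$, which is needed in the definition of a safe pair.

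If $(A,B)$ is not safe, the neighbourhood of $a$ in $A$ contains a copy $K$ of $\subcompgraph^c$, i.e.\ vertices $k_j$ of each color $j\neq c$, pairwise adjacent, all adjacent to $a$. Then $K\cup\{a\}$ is a copy of $\compgraph$ inside $M$, which is impossible. So this case never occurs, and every one-point extension is safe. This is the crucial use of $m_i=1$: in the general case the non-safe situation forces $a\in\myacl(A)$ as in the proof of Lemma \ref{lem:fr_model_complete}, and the $m_c-1$ algebraic points spoil quantifier elimination.

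I expect the only real obstacle to be bookkeeping: the extension axioms are stated for finite safe pairs, so the extension step should be done for finite $A$. Then apply the standard criterion that quantifier elimination follows if, for every pair of models and every finite partial isomorphism between substructures, one-point extensions exist into an $\omega$-saturated elementary extension. The empty substructure is handled the same way, with $A=\emptyset$ and the safe pair $(\emptyset,\{a\})$. This also gives completeness as a byproduct.
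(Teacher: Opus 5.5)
Your proposal is correct and matches the paper's proof in substance. Both reduce quantifier elimination to one-point extensions and realize these via the safe-pair axioms; the paper uses the common-substructure test of \cite[Theorem 3.2.5]{TZ}, while you use a back-and-forth system of finite partial isomorphisms. Your explicit check that every one-point extension is automatically safe (a non-safe one would give a copy of $\compgraph$ inside $M$) supplies the justification that the paper asserts without proof.
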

\begin{proof}
	We apply a standard QE test \cite[Theomre 3.2.5]{TZ}.
	Let $M$ and $N$ be two models of $\randfrcolgT$ and $A$ be a common substructure of $M$ and $N$.
	A basic formula is an atomic $\colorgL(A)$-formula or the negations of an atomic $\colorgL(A)$-formula.
	Let $\phi(x)$ be a conjunction of basic formulas $\theta_1(x),\ldots,\theta_k(x)$ with a single variable $x$ such that $M \models \exists x \ \phi(x)$.
	We show that $N \models \exists x \ \phi(x)$.
	
	Let $v \in M$ with $M \models \phi(v)$ and $\overline{a}$ be elements in $A$ appearing in the formula $\phi(x)$.
	The pair $(\overline{a},\overline{a}v)$ is a safe pair.
	By the definition of $\randfrcolgT$, there exists an embedding $f:\overline{a}v \to N$ fixing $\overline{a}$ pointwise.
	It is obvious that $N \models \phi(\overline{a},f(v))$.
\end{proof}

\begin{theorem}\label{thm:fr_model_companion}
	$\randfrcolgT$ is the model companion of both $\frcolgT$ and $\frcolgTc$.
\end{theorem}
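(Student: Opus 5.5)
To show that $\randfrcolgT$ is the model companion of $T\in\{\frcolgT,\frcolgTc\}$, I will verify the usual three conditions: $\randfrcolgT$ is model complete, every model of $T$ embeds in a model of $\randfrcolgT$, and every model of $\randfrcolgT$ embeds in a model of $T$ (see \cite[Definition 3.2.8]{TZ}). Model completeness is Lemma \ref{lem:fr_model_complete}, so it can be invoked directly and is the same for both choices of $T$.

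For embedding models of $T$ into models of $\randfrcolgT$, Lemma \ref{lem:existence_free_completion2} already does the work. Since $\frcolgTc$ extends $\frcolgT$, any model $H$ of either theory is a model of $\frcolgT$. The lemma then gives $G(H)\models\randfrcolgT$ extending $H$ as an $n$-partite graph, i.e.\ as an $\colorgL$-structure. The converse direction is immediate: $\randfrcolgT$ extends $\frcolgTc$, which extends $\frcolgT$. So every model of $\randfrcolgT$ is itself a model of both theories, and the identity serves as the embedding.

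This settles both claims, because the definition of model companion asks only for mutual embeddability of models together with model completeness. I should still check that uniqueness of the model companion is not an issue. Both $\frcolgT$ and $\frcolgTc$ have the same models up to mutual embedding: every model of $\frcolgT$ embeds in one of $\frcolgTc$ by Lemma \ref{lem:existence_free_completion}. Hence they share the same model companion, which is consistent with the statement.

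The main point of care is not any single step but whether Lemma \ref{lem:existence_free_completion2}, whose proof ends with ``routine'', genuinely yields a model of $\randfrcolgT$. In particular, one must check two things about the union $\bigcup_k H_k$. First, it satisfies condition (*) exactly: the applications of $F$ at each stage create the required $m_c-1$ witnesses, and no later step creates more. Second, it satisfies every safe-pair axiom. Both follow because any finite configuration in the union lies in some $H_{k-1}$ and is handled at stage $k$.

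A subtle point concerns the word ``exactly'' in (*). Later free amalgams along safe pairs might add a new vertex fully adjacent to a copy of $\subcompgraph^c$, which would produce a $m_c$-th witness. But such an amalgam would then contain a copy of $\compgraph$, contradicting the $\compgraph$-freeness established in the proof. So the count in (*) is preserved, and with the lemma assumed, the theorem follows.
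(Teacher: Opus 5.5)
Your proposal is correct and follows the paper's own argument. Model completeness comes from Lemma~\ref{lem:fr_model_complete}, any model of $\frcolgT$ (hence of $\frcolgTc$) extends to a model of $\randfrcolgT$ by Lemma~\ref{lem:existence_free_completion2} (built on the operator $F$ of Lemma~\ref{lem:existence_free_completion}), and the converse is trivial since $\randfrcolgT \supseteq \frcolgTc \supseteq \frcolgT$. Your remarks on why the union in Lemma~\ref{lem:existence_free_completion2} satisfies (*) exactly, via $\compgraph$-freeness, and all safe-pair axioms correctly fill in the details the paper leaves as routine.
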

\begin{proof}
	This theorem follows from Lemma \ref{lem:existence_free_completion}, Lemma \ref{lem:existence_free_completion2} and Lemma \ref{lem:fr_model_complete}.
\end{proof}

\begin{corollary}\label{cor:complete}
	$\randfrcolgT$ is complete.
\end{corollary}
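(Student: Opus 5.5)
Since $\randfrcolgT$ is the model companion of $\frcolgT$ (Theorem \ref{thm:fr_model_companion}), it is model complete. A model complete theory is complete as soon as any two of its models embed into a common model of the theory. Indeed, if $M_1 \subseteq N$ and $M_2 \subseteq N$ are models of a model complete theory, both inclusions are elementary, so $M_1 \equiv N \equiv M_2$. The plan is therefore to show that any two models $M_1, M_2$ of $\randfrcolgT$ embed jointly into some model of $\randfrcolgT$.

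The key step is a joint embedding into a model of $\frcolgT$. Take $M_1$ and $M_2$ with disjoint universes and form the disjoint union $H := M_1 \sqcup M_2$, with no edges between $M_1$ and $M_2$. This is the free amalgam $M_1 \otimes_{\emptyset} M_2$ over the empty structure. Colors and edges inside each $M_i$ are inherited, so $H$ is an $n$-partite graph. We check that $H$ is $\compgraph$-free. Suppose $K \subseteq H$ is a copy of $\compgraph$. Since $n \geq 2$ and every $m_i \geq 1$, $K$ contains vertices of at least two colors. Any two vertices of $K$ with different colors are adjacent, so they lie in the same $M_i$. Any vertex of $K$ also has a neighbour in $K$ of a different color, and so lies in the same $M_i$ as that neighbour. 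Hence $K$ lies entirely in $M_1$ or entirely in $M_2$, contradicting that each $M_i$ is $\compgraph$-free. This is the same free-amalgam argument used in Lemma \ref{lem:existence_free_completion2}, with the base set empty. Thus $H \models \frcolgT$.

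Now apply Lemma \ref{lem:existence_free_completion2} to $H$. This gives a model $G(H)$ of $\randfrcolgT$ extending $H$, and hence extending both $M_1$ and $M_2$ as substructures. By model completeness (Lemma \ref{lem:fr_model_complete}), $M_1 \preceq G(H)$ and $M_2 \preceq G(H)$. Therefore $M_1 \equiv M_2$, and $\randfrcolgT$ is complete.

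There is no real obstacle here. The only point needing care is the $\compgraph$-freeness of the disjoint union. That check uses the standing assumptions $n > 1$ and $m_i \geq 1$, which guarantee that $\compgraph$ is connected. Everything else is a direct application of the earlier lemmas.
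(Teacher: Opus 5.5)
Your proof is correct and matches the paper's argument: the paper also takes the disjoint union of $M_1$ and $M_2$, extends it to a model of $\randfrcolgT$, and uses model completeness to get $M_1 \equiv M \equiv M_2$. The only difference is that you explicitly verify that the disjoint union is $\compgraph$-free, using that $\compgraph$ is connected when $n \geq 2$ and all $m_i \geq 1$; the paper states this without proof.
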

\begin{proof}
	Let $M_i$ be models of $\randfrcolgT$ for $i=1,2$.
	The disjoint union $N$ of $M_1$ and $M_2$ is a model of $\frcolgT$.
	By Theorem \ref{thm:fr_model_companion}, there exists an extension $M$ of $N$, which is a model of $\randfrcolgT$.
	Since $M_i$ is a substructure of $M$ and $\randfrcolgT$ is model complete by Theorem \ref{thm:fr_model_companion}, $M_i \equiv M$ for $i=1,2$.
	This implies $\myth(M_1)=\myth(M)=\myth(M_2)$.
	We have shown that $\randfrcolgT$ is complete.
\end{proof}

\begin{lemma}\label{lem:amalgam_free_colored}
	Let $A$ be a model of $\frcolgTc$, and let $B_1$ and $B_2$ be models of $\frcolgT$ with $A \subseteq B_i$ for $i=1,2$.
	The free amalgam $M:=B_1 \otimes_{A} B_2$ is $\compgraph$-free.
	
	In addition, $\frcolgTc$ possesses the amalgamation property.
\end{lemma}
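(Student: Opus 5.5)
Assume for contradiction that $M=B_1\otimes_A B_2$ contains a copy $K$ of $\compgraph$; the argument follows the one used in the proof of Lemma \ref{lem:existence_free_completion2}. Put $Q_1:=K\cap (B_1\setminus A)$, $Q_2:=K\cap(B_2\setminus A)$ and $Q_3:=K\cap A$. Since $B_1$ and $B_2$ are $\compgraph$-free, both $Q_1$ and $Q_2$ are nonempty. By the definition of the free amalgam, $Q_1$ is fully separated from $Q_2$. Any two vertices of $K$ with different colors are adjacent, so all vertices of $Q_1\cup Q_2$ share a single color, say $c$. Hence every vertex of $K$ of color $\neq c$ lies in $Q_3\subseteq A$. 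Let $K'$ be the set of these vertices; it is a copy of $\subcompgraph^c$ contained in $A$.

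Now use that $A\models\frcolgTc$. By condition (*), there are exactly $m_c-1$ vertices of color $c$ in $A$ that are fully adjacent to $K'$. I would read this "exactly" as counting inside the structure $A$ itself, so there are at most $m_c-1$ such vertices in $A$. The key step is to transfer this bound to $B_1$ and $B_2$, using that each $B_i$ is $\compgraph$-free. Write $y_1,\ldots,y_{m_c-1}\in A$ for these vertices. Then $K'\cup\{y_1,\ldots,y_{m_c-1}\}$ is a copy of $\compgraph$ with exactly one vertex of color $c$ removed. Any vertex of color $c$ in $B_i\setminus A$ that is fully adjacent to $K'$ would complete this to a copy of $\compgraph$ inside $B_i$, which is impossible. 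Since $Q_1\neq\emptyset$ and every vertex of $Q_1$ has color $c$ and is fully adjacent to $K'$, this is a contradiction. So $M$ is $\compgraph$-free. (The same argument without the nonemptiness observation also shows that $K$ has at most $m_c-1$ color-$c$ vertices in $A$, so some vertex must lie outside $A$.)

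For the amalgamation property of $\frcolgTc$, take $A,B_1,B_2\models\frcolgTc$ with $A\subseteq B_i$. By the first part, $B_1\otimes_A B_2$ is a model of $\frcolgT$. Apply Lemma \ref{lem:existence_free_completion} to get $F(B_1\otimes_A B_2)\models\frcolgTc$ extending it, with both $B_i$ embedded over $A$.

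The main obstacle I expect is the subtle point in the second paragraph. One must make sure the witnesses $y_k$ of condition (*) for $K'$ already lie in $A$, and that being a model of $\frcolgTc$ really limits the color-$c$ common neighbours of $K'$ in $B_i$. Both follow because $A$ is itself a model of $\frcolgTc$ and each $B_i$ is $\compgraph$-free. A secondary point is whether amalgamation should mean embeddings of $\frcolgTc$-models, possibly requiring $F(\cdot)$ to preserve the $B_i$ as substructures. Since $F$ only adds vertices, this is automatic.
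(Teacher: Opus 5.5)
Your proof is correct and follows the paper's argument. Both isolate a copy $K'$ of $\subcompgraph^c$ inside $A$, take the $m_c-1$ color-$c$ witnesses from condition (*) in $A$, and add a vertex of $Q_1\subseteq B_1\setminus A$ to get a copy of $\compgraph$ in $B_1$; both then obtain amalgamation by applying $F$ to the free amalgam. Your explicit check that $Q_1$ and $Q_2$ are both nonempty before concluding they share a color is slightly more careful than the paper's ordering, and only the existence half of (*) is actually needed.
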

\begin{proof}
	We show that the free amalgam $M:=B_1 \otimes_{A} B_2$ is $\compgraph$-free.
	Assume for contradiction that $M$ contains a copy $K$ of $\compgraph$.
	Put $Q_i:=K \cap B_i \setminus A$ for $i=1,2$ and  $Q_3:=K \cap A$.
	By the definition of free amalgam, $Q_1$ is fully separated from $Q_2$.
	Therefore, the vertices of $Q_1$ and $Q_2$ have an identical color, say $c$.
	In particular, $Q_3$ contains a copy $K'$ of $\subcompgraph^c$. 
	Since $B_2$ is $\compgraph$-free, we have $Q_1 \neq \emptyset$. Let $q$ be a vertex in $Q_1$.
	Since $A$ is a model of $\frcolgTc$, there exists $m_c-1$ vertices $p_1,\ldots, p_{m_c-1}$ fully adjacent to $K'$.
	The subgraph $K' \cup \{p_1,\ldots,p_{m_c-1},q\}$ of $B_1$ is a copy of $\compgraph$, which is absurd.
	We have shown that $ M:=B_1 \otimes_{A} B_2$ is $\compgraph$-free.
	
	Let $ C:=F( B_1 \otimes_{ A}  B_2)$ be the graph constructed in Lemma \ref{lem:existence_free_completion}.
	$ C$ is a model of $\frcolgTc$.
	The embeddings $ B_i \to  C$ fixes $ A$ pointwise.
	This means that $\frcolgTc$ has the amalgamation property.
\end{proof}

\begin{corollary}\label{cor:fr_model_completion}
	$\randfrcolgT$ is the model completion of $\frcolgTc$.
\end{corollary}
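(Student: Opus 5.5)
We will use the standard characterization: a model companion $T^*$ of a theory $T$ is the model completion of $T$ if and only if $T$ has the amalgamation property. Equivalently, for every model $A$ of $T$, the theory $T^* \cup \mydiag(A)$ is complete. By Theorem \ref{thm:fr_model_companion}, $\randfrcolgT$ is already the model companion of $\frcolgTc$. By Lemma \ref{lem:amalgam_free_colored}, $\frcolgTc$ has the amalgamation property. So the corollary follows at once from this criterion, for instance from the version in \cite{TZ}, where a model companion of a theory with amalgamation is its model completion. If we cite it, the proof is two lines.

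If instead we want a direct argument, we verify completeness of $\randfrcolgT \cup \mydiag(A)$ for every model $A$ of $\frcolgTc$. Let $M_1, M_2$ be models of $\randfrcolgT$, both containing $A$ as a substructure. Each $M_i$ is a model of $\frcolgTc$, so by Lemma \ref{lem:amalgam_free_colored} we obtain a model $C$ of $\frcolgTc$ together with embeddings $M_i \to C$ over $A$. Concretely, $C = F(M_1 \otimes_A M_2)$, where the free amalgam is $\compgraph$-free because $A \models \frcolgTc$. Lemma \ref{lem:existence_free_completion2} then extends $C$ to a model $N$ of $\randfrcolgT$. By model completeness (Lemma \ref{lem:fr_model_complete}), $M_i \preceq N$ for $i=1,2$. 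Hence $M_1$ and $M_2$ satisfy the same $\colorgL(A)$-sentences, and $\randfrcolgT \cup \mydiag(A)$ is complete.

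The only point needing care is that the amalgamation in Lemma \ref{lem:amalgam_free_colored} really takes place over $A$ with $M_1, M_2$ arbitrary (possibly infinite) models of $\frcolgT$. The lemma is stated for exactly this generality, and the embeddings $M_i \to C$ fix $A$ pointwise by construction. Beyond this, no serious obstacle is expected.
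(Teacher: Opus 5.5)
Your proof is correct and is the paper's argument: the paper also combines Theorem \ref{thm:fr_model_companion} with the amalgamation property from Lemma \ref{lem:amalgam_free_colored} and appeals to the standard criterion, citing \cite[Proposition 3.5.18]{ChanK}. Your second paragraph is a valid unwinding of that criterion here. It needs no preliminary extension of $M_1,M_2$ to models of $\frcolgTc$, since $\randfrcolgT$ contains $\frcolgTc$, and it makes the argument independent of the exact hypotheses in whichever reference one cites.
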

\begin{proof}
	This corollary follows from Lemma \ref{lem:amalgam_free_colored}, Theorem \ref{thm:fr_model_companion} and \cite[Propositon 3.5.18]{ChanK}. 
\end{proof}

In the rest of this section, let $\mathbb M$ be a monster model of $\randfrcolgT$.

\begin{proposition}\label{prop:fr_monster_basic}
	The following assertions hold: 
	\begin{enumerate}
		\item[(1)] Let $ A$ and $ B$ be models of $\frcolgTc$ and $\frcolgT$, respectively.
		Let $f: A \to \mathbb M$ and $g: A \to  B$ be $\colorgL$-embeddings.
		Then, there exists an $\colorgL$-embedding $h: B \to \mathbb M$ such that $f=h \circ g$.  
		\item[(2)] Suppose $ A \subseteq \mathbb M$ is a model of $\frcolgTc$, and $C \subseteq  A$.
		Let $\overline{a} \in  A$ and $f: A \to \mathbb M$ is an $\colorgL$-embedding fixing $C$ pointwise.
		Let $ M$ be a small elementary submodel of $\mathbb M$ with $ A \subseteq  M$.
		Then, we can extend $f$ to an elementary embedding $g: M \to \mathbb M$, and $\mytp(\overline{a}/C)=\mytp(f(\overline{a})/C)$.
		\item[(3)] Let $A$ be a small subset of $\mathbb M$.
		The algebraic closure $\myacl(A)$ of $A$ coincides with the set of vertices of a smallest model of $\frcolgTc$ containing $A$.
	\end{enumerate}	
\end{proposition}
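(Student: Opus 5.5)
For (1), I will combine Lemma \ref{lem:amalgam_free_colored} with saturation of $\mathbb M$. Identify $A$ with $f(A)$ and with $g(A)$. By Lemma \ref{lem:existence_free_completion} we may enlarge $B$ to $F(B)$, so it suffices to embed $F(B)$ over $A$. Write $A' := f(A)\subseteq\mathbb M$, a model of $\frcolgTc$. The free amalgam $\mathbb M\otimes_{A'} B$ is $\compgraph$-free by Lemma \ref{lem:amalgam_free_colored}, since $A'$ models $\frcolgTc$; here a small elementary submodel $N\supseteq A'$ can replace $\mathbb M$. By Lemma \ref{lem:existence_free_completion2} this amalgam extends to a model $N'$ of $\randfrcolgT$. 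Model completeness (Lemma \ref{lem:fr_model_complete}) gives $N\preceq N'$. Since $\mathbb M$ is saturated and $N\preceq\mathbb M$, the type of (the image of) $B$ over $N$ inside $N'$ is realized in $\mathbb M$, and this yields $h$ with $f=h\circ g$.

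For (2), the key point is that $\mytp(\overline a/C)$ is determined by the isomorphism type of $A$ over $C$ together with the fact that $A$ is a model of $\frcolgTc$. My plan is to show that $\mathrm{id}_A$ and $f$ both extend to partial elementary maps. Work in $M\preceq\mathbb M$ and consider $f:A\to f(A)$. Both $A$ and $f(A)$ are models of $\frcolgTc$, the latter because (*) is expressed by existence and ``exactly $m_c-1$'' adjacent vertices and $f$ is an isomorphism onto its image. I will run a back-and-forth between $M$ and $\mathbb M$ over $f$, using (1) repeatedly. To extend $f$ to one new element $b\in M$, pass to $F(Ab)\subseteq M$: taking the closure in $M$, the smallest model of $\frcolgTc$ containing $Ab$ exists inside $M$ because $M$ satisfies (*) with exactly $m_c-1$ witnesses. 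Then apply (1) to $f:A\to\mathbb M$ and $A\hookrightarrow F(Ab)$.

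This gives an embedding $g:M\to\mathbb M$ extending $f$. By model completeness of $\randfrcolgT$ (Lemma \ref{lem:fr_model_complete}), any embedding between models is elementary, so $g$ is elementary. Hence $\mytp(\overline a/C)=\mytp(g(\overline a)/C)=\mytp(f(\overline a)/C)$, because $g$ fixes $C$. For the extension step, the closure argument needs care: the extension must stay inside models of $\frcolgTc$ at each stage. Taking unions of chains, which preserve (*), handles limit stages, so I can build $g$ on all of $M$ directly by transfinite induction, enumerating $M$.

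For (3), let $\langle A\rangle$ denote the closure of $A$ under (*)-witnesses, that is, the union of iteratively adding the $m_c-1$ fully adjacent vertices to each copy of $\subcompgraph^c$. This is the smallest model of $\frcolgTc$ containing $A$, and each added vertex is one of finitely many solutions of a formula over earlier elements, so $\langle A\rangle\subseteq\myacl(A)$. For the converse, take $b\notin\langle A\rangle$. Let $D$ be the smallest model of $\frcolgTc$ containing $\langle A\rangle b$, and form free amalgams of many copies of $D$ over $\langle A\rangle$. By Lemma \ref{lem:amalgam_free_colored} iterated, these remain $\compgraph$-free. By (1), they embed into $\mathbb M$ over $\langle A\rangle$, and by (2) all the copies of $b$ have the same type over $A$. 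This gives infinitely many realizations, so $b\notin\myacl(A)$.

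I expect the main obstacle to be (2), specifically making the iterated use of (1) legitimate. Each stage's domain has to be a model of $\frcolgTc$ embedded in $M$, and this relies on the uniqueness of witnesses in (*) in order to stay inside $M$.
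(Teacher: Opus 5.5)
Your proposal is correct and follows essentially the paper's route: a free amalgam over a $\frcolgTc$-model, Lemma~\ref{lem:existence_free_completion2}, model completeness and saturation/homogeneity of $\mathbb M$ for (1); model completeness for (2); and, for (3), amalgamating copies over the closure of $A$ and using (1) and (2) to produce too many realizations. The one inefficiency is in (2): since $M$ is itself a small model of $\frcolgT$, applying (1) once with $B=M$ and the inclusion $A\hookrightarrow M$ gives the extension $g$ directly. So the transfinite one-element-at-a-time construction, and the ``main obstacle'' you anticipate, are unnecessary, although your construction is also valid.
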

\begin{proof}
	(1) We may assume that $f$ is an inclusion.
	Let $M \subseteq \mathbb M$ be a small submodel of $\randfrcolgT$ containing $A$.
	By Lemma \ref{lem:existence_free_completion2}, there exists a model $N$ of $\randfrcolgT$ containing $B$.
	By Lemma \ref{lem:amalgam_free_colored}, $N':=M \otimes_A N$ is $\compgraph$-free.
	We can embed $N'$ into $\mathbb M$ by Lemma \ref{lem:existence_free_completion2} and saturation of $\mathbb M$.
	Therefore, we assume that $N'$ is a substructure of $\mathbb M$.
	Let $M'$ and $B'$ be copies of $M$ and $B$ in $N'$, respectively.
	Let $\sigma:M' \to M$ and $g':B \to B'$ be isomorphisms.
	We can extend $\sigma$ to an automorphism $\overline{\sigma}:\mathbb M \to \mathbb M$ by \cite[Corollary 6.1.8]{TZ} because $\mathbb M$ is a monster model.
	The map $\overline{\sigma} \circ g'$ is a desired embedding.
	
	(2) Let $ M$ be a small elementary submodel of $\mathbb M$ with $ A \subseteq  M$.
	By (1), we can extend $f$ to an embedding $g: M \to \mathbb M$.
	Since $\randfrcolgT$ is model complete by Lemma \ref{lem:fr_model_complete}, $g$ is an elementary embedding.
	Therefore, $\mytp(\overline{a}/C)=\mytp(g(\overline{a})/g(C))=\mytp(f(\overline{a})/C)$.
	
	(3) As an intermediate step, we show the following claim:
	\medskip
	
	\textbf{Claim A.} $\myacl(A)$ is a model of $\frcolgTc$.
	\begin{proof}[Proof of Claim]
		Let $\overline{v}$ be a copy of $\subcompgraph^c$ in $\myacl(A)$ for some color $c$.
		Since $\mathbb M$ is a model of $\frcolgTc$, there exist exactly $m_c-1$ vertices $\overline{w}$ in $\mathbb M$ of color $c$ fully adjacent to $\overline{v}$.
		We have $\overline{w} \in \myacl(\myacl(A))$.
		By \cite[Lemma 5.6.6]{TZ}, the equality $\myacl(A)=\myacl(\myacl(A))$ holds.
		This means $\overline{w} \in \myacl(A)$, which shows that $\myacl(A)$ is a model of $\frcolgTc$.
	\end{proof}
	It is easy to show that the intersection of all the models of $\frcolgTc$ containing $A$ is also a model of $\frcolgTc$.
	We omit the proof.
	Let $ C$ be this intersection.
	Claim A implies $ C \subseteq \myacl(A)$.
	Therefore, we may suppose that $A= C$ without loss of generality.
	In particular, we may assume that $A$ is a model of $\frcolgTc$.
	
	Assume for contradiction that $ C \neq \myacl(A)$.
	Choose $b_1 \in \myacl(A) \setminus  C$.
	The type $\mytp(b_1/A)$ has finitely many realization, say $b_1,\ldots, b_k$.
	Consider the free amalgam $\myacl(A) \otimes_{ C}\myacl(A)$ which is a model of $\frcolgT$ by Lemma \ref{lem:amalgam_free_colored}.
	There exists an $\colorgL$-embedding $\myacl(A) \otimes_{ C}\myacl(A) \hookrightarrow \mathbb M$ fixing $C$ pointwise by (1).
	This induces two $\colorgL$-embeddings $g,h:\myacl(A) \to \mathbb M$ fixing $ C$ pointwise such that $g(\myacl(A) \setminus  C) \cap h(\myacl(A) \setminus  C)=\emptyset$.
	By (2), $g(b_1)\ldots, g(b_k), h(b_1),\ldots, h(b_k)$ are solutions of $\mytp(b_1/A)$, which contradicts that $\mytp(b_1/A)$ has only $k$ solutions.
\end{proof}

\begin{definition}
	A formula $\phi(\overline{x})$ is called a \textit{basic existential formula} if there exists $ A,  B \in \frcolgK$ such that $ A \subseteq B$ and $\phi(\overline{x})$ is equivalent to $\exists \overline{y}\ \mydiag_{ A,  B}(\overline{x'},\overline{y})$ modulo $\randfrcolgT$, where $\overline{x'}$ is a subtuple of $\overline{x}$.
\end{definition}

The following proposition coincides with \cite[Proposition 2.17]{CK} if $n=2$.
We give a more elementary proof than that of \cite[Proposition 2.17]{CK}.

\begin{proposition}\label{prop:almost_qe}
	Every formula is equivalent to a disjunction of finitely many basic existential formulas modulo $\randfrcolgT$.
\end{proposition}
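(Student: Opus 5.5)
We plan to show that the type of a finite tuple $\overline{a}$ over $\emptyset$ is determined by the isomorphism type of a suitable finite part of $\myacl(\overline{a})$, and then to conclude by compactness. The key tool is Proposition \ref{prop:fr_monster_basic}(2): if two tuples $\overline{a}$ and $\overline{a'}$ have closures $\myacl(\overline{a})$ and $\myacl(\overline{a'})$, which are models of $\frcolgTc$ by Proposition \ref{prop:fr_monster_basic}(3), and there is an isomorphism $\myacl(\overline{a}) \to \myacl(\overline{a'})$ sending $\overline{a}$ to $\overline{a'}$, then $\mytp(\overline{a})=\mytp(\overline{a'})$. Apply (2) with $C=\emptyset$, $A=\myacl(\overline{a})$ and $f$ the isomorphism viewed as an embedding into $\mathbb M$.

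Next we reduce this infinite condition to finitary ones. For every finite $B \subseteq \myacl(\overline{a})$ containing $\overline{a}$, the formula $\exists \overline{y}\ \mydiag_{\overline{a},B}(\overline{x};\overline{y})$ is a basic existential formula true of $\overline{a}$; note $B \in \frcolgK$. We claim that $\mytp(\overline{a})$ is implied by the set $\Sigma_{\overline{a}}(\overline{x})$ of all these formulas. Suppose $\overline{a'}$ satisfies $\Sigma_{\overline{a}}$. Then for each finite $B$ there is a partial embedding $B \to \mathbb M$ sending $\overline{a}$ to $\overline{a'}$. Its image lies in $\myacl(\overline{a'})$, because a copy of a finite closed-off piece of $\myacl(\overline{a})$ realized over $\overline{a'}$ is algebraic over $\overline{a'}$: the defining $\subcompgraph^c$ witnesses have exactly $m_c-1$ extensions by condition (*).

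To assemble these finite embeddings into a single isomorphism $\myacl(\overline{a}) \cong \myacl(\overline{a'})$ over $\overline{a}\mapsto\overline{a'}$, we use that $\myacl(\overline{a})$ is countable and built in $\omega$ stages. Each stage adds the $m_c-1$ forced vertices over the finitely many copies of $\subcompgraph^c$, so every stage is finite and the tree of partial embeddings is finitely branching. König's lemma then gives an embedding $\myacl(\overline{a}) \to \myacl(\overline{a'})$. Surjectivity follows by the symmetric argument: the image is a model of $\frcolgTc$ containing $\overline{a'}$, hence contains $\myacl(\overline{a'})$ by Proposition \ref{prop:fr_monster_basic}(3).

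With the claim in hand, compactness finishes the proof. A formula $\psi(\overline{x})$ holds at $\overline{a}$ exactly when $\Sigma_{\overline{a}} \vdash \psi$, so some single basic existential formula $\theta_{\overline{a}} \in \Sigma_{\overline{a}}$ implies $\psi$; here we use that $\Sigma_{\overline{a}}$ is closed under finite conjunctions, since the union of finite sets $B$ is again finite. The family $\{\theta_{\overline{a}}\}$ covers $\psi(\mathbb M)$, so by compactness finitely many suffice, and $\psi$ is equivalent to their disjunction. I expect the main obstacle to be the König/algebraicity step. We must make sure the finite pieces are chosen compatibly and that the witnesses are forced into $\myacl(\overline{a'})$. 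If this becomes delicate, we instead take $B$ to be the $k$-th stage of the closure construction, which is $\emptyset$-definable in a uniform way from $\overline{a}$.
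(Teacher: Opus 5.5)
Your argument is correct, but it takes a different route from the paper.

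The paper argues syntactically. By model completeness (Lemma~\ref{lem:fr_model_complete}), every formula is equivalent to an existential formula $\exists\overline{y}\,\psi(\overline{x},\overline{y})$. One then puts $\psi$ in disjunctive normal form, pads each disjunct with tautologies until it decides colors, equalities and adjacencies of all its variables (so it becomes some $\mydiag_{A_i,B_i}$), and distributes $\exists\overline{y}$ over the disjunction.

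You argue semantically. The type of $\overline{a}$ is determined by the existence of an embedding of $\myacl(\overline{a})$ with $\overline{a}\mapsto\overline{a'}$ (Proposition~\ref{prop:fr_monster_basic}(2),(3)). This is captured by the basic existential formulas attached to finite pieces of $\myacl(\overline{a})$, and compactness turns it into a finite disjunction.

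The paper's proof is shorter and needs only model completeness. Yours needs the description of $\myacl$, the extension property behind Proposition~\ref{prop:fr_monster_basic}(2), and completeness of $\randfrcolgT$ (Corollary~\ref{cor:complete}). In return it gives a sharper normal form, in the spirit of \cite[Proposition 2.17]{CK}. If you take $B$ to be a closure stage, the witnesses $\overline{y}$ in each disjunct are algebraic over $\overline{x}$. Moreover, every disjunct is automatically consistent with $B\in\frcolgK$, a point the paper leaves implicit.

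Two simplifications are available:
\begin{itemize}
\item Proposition~\ref{prop:fr_monster_basic}(2) only requires an embedding, so your surjectivity argument onto $\myacl(\overline{a'})$ is superfluous.
\item K\"onig's lemma can be replaced by saturation of $\mathbb M$. The union of the diagrams $\mydiag_{\overline{a},B}(\overline{a'};\overline{y}_B)$ is a finitely satisfiable small type over $\overline{a'}$, and a realization is the required embedding.
\end{itemize}

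If you keep K\"onig, fix two inaccuracies. First, the image of an arbitrary finite $B$ need not lie in $\myacl(\overline{a'})$: a vertex whose justifying copy of $\subcompgraph^c$ is not in $B$ is unconstrained. Second, finite branching requires the image of the $k$-th closure stage of $\overline{a}$ to lie in the (finite) $k$-th closure stage of $\overline{a'}$, not merely in $\myacl(\overline{a'})$, which can be infinite. Both facts follow from your stage-by-stage induction using the exactly $m_c-1$ witnesses.
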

\begin{proof}
	Let $\phi(\overline{x})$ be an arbitrary formula.
	Since $\randfrcolgT$ is model complete by Lemma \ref{lem:fr_model_complete}, $\neg\phi(\overline{x})$ is equivalent to a universal formula by \cite[Lemma 3.2.7]{TZ}.
	Therefore, $\phi$ is equivalent to an existential formula.
	We may assume that $\phi(\overline{x})=\exists \overline{y}\ \psi(\overline{x},\overline{y})$, where $\psi(\overline{x},\overline{y})$ is a quantifier-free formula.
	The formula $\psi(\overline{x},\overline{y})$ is of the form $\psi(\overline{x},\overline{y})=\bigvee_{i=1}^k \chi_i(\overline{x},\overline{y})$, where $\chi_i (\overline{x},\overline{y})$ is a conjunction of atomic formulas and negated atomic formulas.
	Let $\operatorname{var}(\psi)$ be the set of variables appearing in $\psi(\overline{x},\overline{y})$.
	Replace 
	\begin{align*}
		&\psi(\overline{x},\overline{y}) \wedge \bigwedge_{v \in \operatorname{var}(\psi)}\left(\bigvee_{i=1}^nC_i(v)\right) \\
		&\qquad \wedge \bigwedge_{v,w \in \operatorname{var}(\psi), v \neq w}((v=w) \vee ( v \neq w)) \wedge (R(v,w) \vee \neg R(v,w))),
	\end{align*}
	which is equivalent to $\psi(\overline{x},\overline{y})$ modulo $\randfrcolgT$, with $\psi(\overline{x},\overline{y})$.
	We may assume that, for every $v \in \operatorname{var}(\psi)$, $\chi_i (\overline{x},\overline{y})$ implies $C_{k_v}(v)$ for some $1 \leq k_v \leq n$ thanks to this replacement.
	In the same manner, we may assume that, for every $v,w \in \operatorname{var}(\psi)$ with $v \neq w$, $\chi_i (\overline{x},\overline{y})$ implies either the formula $v=w$ or the formula $v \neq w$.
	We can employ the similar assumption for $R(v,w)$ and $\neg R(v,w)$.
	Substitute $w$ into every occurrence $v$ in $\chi_i$ if $\chi_i$ implies the formula $v=w$, then $\chi_i$ is of the form $\mydiag_{A_i,B_i}(\overline{x'}_i,\overline{y'}_i)$ for some graphs $A_i$ and $B_i$ with $A_i \subseteq B_i$, where $\overline{x'}_i$ and $\overline{y'}_i$ are subsequences of $\overline{x}$ and $\overline{y}$, respectively.
	Since $\phi(\overline{x})$ is equivalent to $\exists \overline{y}\ \bigvee_{i=1}^k \chi_i(\overline{x},\overline{y})$, $\phi(\overline{x})$ is equivalent to $\bigvee_{i=1}^k (\exists \overline{y}\  \chi_i(\overline{x},\overline{y}))$, and this is equivalent to $\bigvee_{i=1}^k (\exists \overline{y'}_i\  \mydiag_{A_i,B_i}(\overline{x'}_i,\overline{y'}_i))$.
\end{proof}

\begin{definition}[{\cite{Chernikov,Shelah}}]
	A theory $T$ is $\mytptwo$ if there exist a formula $\phi(\overline{x},\overline{y})$ and a collection of tuples in a model $(\overline{a}_{ij}\;|\;i,j<\omega)$ such that, for every $\sigma:\omega \to \omega$, $\{\phi(\overline{x},\overline{a}_{i\sigma(i)})\}$ is consistent, and $\phi(\overline{x},\overline{a}_{i,j}) \wedge  \phi(\overline{x},\overline{a}_{i,j'})$ is inconsistent for $i<\omega$ and $j<j'<\omega$.
	We say that $T$ is $\myntptwo$ if $T$ is not $\mytptwo$.
	
%	Two elements $\eta,\mu \in 2^{<\omega}$ are comparable if $\eta$ is a concatenation of $\mu$ with some sequence or $\mu$ is a concatenation of $\eta$ with some sequence.
%	For $\mu \in 2^{\omega}$, $\mu|_k$ is the sequence consisting of the first $k$ elements in $\mu$.
%	$T$ is $\mysop_2$ if there exist a formula $\phi(\overline{x},\overline{y})$ and $(\overline{a}_{\eta}\;|\;\eta \in 2^{<\omega})$ be a collection of tuples in a model such that, for every $\eta,\mu \in 2^{<\omega}$, $\{\phi(\overline{x},\overline{a}_{\eta|_k})\;|\;k \leq |\eta|\}$ is consistent, and $\phi(\overline{x},\overline{a}_{\eta}) \wedge  \phi(\overline{x},\overline{a}_{\mu})$ is inconsistent whenever $\eta$ and $\mu$ are incomparable.
%	We say that $T$ is $\mynsop_2$ if $T$ is not $\mysop_2$.
	
	For $k \geq 3$, $T$ is $\mysop_k$ if there exist a formula $\phi(\overline{x},\overline{y})$ with $|\overline{x}|=|\overline{y}|$ and a collection $(\overline{a}_i\;|\;i<\omega)$ of tuples in a model such that
	\begin{itemize}
		\item $\mathbb M \models \phi(\overline{a}_i,\overline{a}_j)$ if $i<j$;
		\item $\phi(\overline{x}_1,\overline{x}_2) \wedge \cdots \wedge \phi(\overline{x}_{k-1},\overline{x}_k) \wedge \phi(\overline{x}_k,\overline{x}_1)$ is inconsistent.
	\end{itemize}
	We say that $T$ is $\mynsop_k$ if $T$ is not $\mysop_k$.
%	
%	$T$ is $\mysop$ if there exist a formula $\phi(\overline{x},\overline{y})$ and $(\overline{a}_i\;|\;i<\omega)$ be a collection of tuples in a model such that $\phi(\overline{x},\overline{a}_i)\wedge \phi(\overline{x},\overline{a}_j)$ is consistent if and only if $i<j$.
%	We say that $T$ is $\mynsop$ if $T$ is not $\mysop$.
\end{definition}

\begin{proposition}\label{prop:fr_TP2}
	If $n>2$, $\randfrcolgT$ is $\mytptwo$.
\end{proposition}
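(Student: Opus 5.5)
The plan is to adapt the $\mytptwo$ configuration for generic $K_m$-free graphs \cite[Example 3.13]{Chernikov}, using a third color to separate the two halves of each parameter. Fix the colors $1,2,\ldots,n$ with $n \geq 3$. Let $\overline{x}=(x_1,\ldots,x_{m_1})$ be a tuple of $m_1$ variables. Let $\overline{y}=(\overline{y}_B,\overline{y}_C)$, where $\overline{y}_B$ has $m_2$ variables and $\overline{y}_C$ has $m_3+\cdots+m_n$ variables.

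Define $\phi(\overline{x};\overline{y}_B,\overline{y}_C)$ to say the following:
\begin{itemize}
\item every $x_k$ has color $1$;
\item the $x_k$ are pairwise distinct;
\item every $x_k$ is adjacent to every vertex in $\overline{y}_B$ and in $\overline{y}_C$.
\end{itemize}
The parameter $\overline{a}_{ij}=(B_{ij},C_{ij})$ will satisfy these conditions:
\begin{itemize}
\item $B_{ij}$ consists of $m_2$ vertices of color $2$;
\item $C_{ij}$ is a copy of $\compgraph\langle\{3,\ldots,n\}\rangle$;
\item inside row $i$, $B_{ij}$ is fully adjacent to $C_{ij'}$ for all $j \neq j'$;
\item $B_{ij}$ is fully separated from $C_{ij}$;
\item distinct rows are fully separated from each other.
\end{itemize}

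\emph{Row inconsistency.} If $\overline{x}$ realizes both $\phi(\overline{x};\overline{a}_{ij})$ and $\phi(\overline{x};\overline{a}_{ij'})$ with $j\neq j'$, then $\overline{x}\cup B_{ij}\cup C_{ij'}$ is a copy of $\compgraph$. This is impossible in a model of $\frcolgT$, so $\phi(\overline{x},\overline{a}_{ij})\wedge\phi(\overline{x},\overline{a}_{ij'})$ is inconsistent. This step is where $n>2$ is used: for $n=2$ there is no $C$-part to play against the $B$-part.

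\emph{Path consistency.} Fix $\sigma:\omega\to\omega$. By compactness it suffices to treat finitely many rows $i<N$ and finitely many columns. Consider the finite $n$-partite graph $D$ made of the parameter configuration above together with $m_1$ new vertices of color $1$. These new vertices are joined exactly to the vertices in $B_{i\sigma(i)}\cup C_{i\sigma(i)}$ for $i<N$.

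The main point is that $D$ is $\compgraph$-free. Any copy of $\compgraph$ in $D$ needs $m_1$ vertices of color $1$, so it uses all of $\overline{x}$. Its remaining vertices must then lie among the neighbours of $\overline{x}$, namely $\bigcup_{i<N}B_{i\sigma(i)}C_{i\sigma(i)}$. Since distinct rows are fully separated and $n \geq 3$, the copy would have to live inside a single $B_{i\sigma(i)}\cup C_{i\sigma(i)}$. That set is not complete across colors $2$ and $3$, because $B_{i\sigma(i)}$ is separated from $C_{i\sigma(i)}$. The parameter configuration without $\overline{x}$ is $\compgraph$-free for the simpler reason that it has no vertex of color $1$.

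The empty graph is trivially a model of $\frcolgTc$. Hence Proposition \ref{prop:fr_monster_basic}(1), applied with $A=\emptyset$, embeds $D$ into $\mathbb M$. The same argument embeds any finite portion of the parameter configuration together with a witness for any finite portion of any path.

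Now let $\Sigma$ be the set of sentences in constants for all $\overline{a}_{ij}$ consisting of the following:
\begin{itemize}
\item the diagram of the parameter configuration;
\item for each $\sigma$ and each finite set of rows, the sentence $\exists\overline{x}\bigwedge_{i}\phi(\overline{x},\overline{a}_{i\sigma(i)})$.
\end{itemize}
By the embeddings just described, $\Sigma$ is finitely satisfiable in $\mathbb M$, so it is realized in $\mathbb M$ by saturation. The row inconsistency holds automatically by the first argument. Therefore $\phi$ witnesses $\mytptwo$.

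I expect the only delicate step to be the $\compgraph$-freeness check for $D$, and specifically making sure that no copy of $\compgraph$ can mix parameters from different rows or from different columns of the same row. The full separations between rows, and between $B_{ij}$ and $C_{ij}$, are chosen precisely to rule this out.
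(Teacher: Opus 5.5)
Your array, your formula $\phi$ and your row-inconsistency argument are the same as the paper's. Path consistency is handled differently. The paper embeds the color-$1$-free array into $\mathbb M$ once. It then gets each witness directly from the axiom for the safe pair $(K_{\sigma,k},K_{\sigma,k}\overline{x})$, where $K_{\sigma,k}$ is the union of the parameters along the first $k$ rows of $\sigma$. You instead embed a finite piece of the array together with a witness, and pass to a single array by compactness over $\Sigma$.

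As written, that compactness step is not justified. A finite subset of $\Sigma$ may contain the sentences $\exists\overline{x}\bigwedge_{i\in F}\phi(\overline{x},\overline{a}_{i\sigma(i)})$ for several different $\sigma$. So you need one copy of a finite part of the array carrying several witness tuples simultaneously, and your argument does not give this:
\begin{itemize}
\item Your $D$ contains only one witness tuple.
\item Separate embeddings of $D$ for different paths land on different copies of the array, and you have not shown these can be moved onto a common one.
\item Your freeness argument (``a copy of $\compgraph$ uses all of $\overline{x}$'') no longer applies once $D$ has more than $m_1$ vertices of color $1$.
\end{itemize}
The repair is short. Put one witness tuple for each of the finitely many paths into $D$. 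Any copy of $\compgraph$ in $D$ contains a vertex $p$ of color $1$, and its vertices of color $\neq 1$ form a copy of $\subcompgraph^{1}$ inside the neighbourhood of $p$. That neighbourhood is $\bigcup_{i}(B_{i\sigma(i)}\cup C_{i\sigma(i)})$ for the path $\sigma$ of $p$. It has no edge between a vertex of color $2$ and a vertex of color $3$, so it cannot contain a copy of $\subcompgraph^{1}$. This observation is exactly the safety of the pair above. So you could also drop $\Sigma$ and apply the safe-pair axiom to the already embedded array, as the paper does.
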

\begin{proof}
	Let $\overline{c}$ be the complete graph of consisting $m_i$ vertices of color $i$ for $3 \leq i \leq n$.
	Let $\overline{b}$ be the tuple of vertices of color $2$ of length $m_2$.
	Let $(\overline{b}_{i,j}\overline{c}_{i,j}\;|\;i,j<\omega)$ be copies of $\overline{b}\overline{c}$.
	We join vertices by edges so that $\overline{b}_{i,j}$ is fully adjacent to $\overline{c}_{i',j'}$ if and only if $i=i'$ and $j \neq j'$, and they are fully separated from each other otherwise.
	Since the graph constructed above does not have vertices of color $1$, it is a model of $\frcolgT$.
	Therefore, we can embed it into $\mathbb M$ by Theorem \ref{thm:fr_model_companion}.
	
	Let $\overline{x}$ be the tuple of variables of length $m_1$.
	Let $\phi(\overline{x},\overline{y},\overline{z})$ denote the formula saying the following:
	\begin{itemize}
		\item $\overline{x}$ is a tuple of vertices of color $1$ of length $m_1$;
		\item $\overline{y}$ is a tuple of vertices of color $2$ of length $m_2$;
		\item $\overline{z}$ is a copy of $\overline{c}$;
		\item $\overline{x}$ is fully adjacent to $\overline{y}$ and $\overline{z}$.
	\end{itemize}
	
	Let $\sigma: \omega \to \omega$ be an arbitrary map.
	We show that $\{\phi(\overline{x},\overline{b}_{i,\sigma(i)},\overline{c}_{i,\sigma(i)})\;|\;i<\omega\}$ is consistent.
	By saturation of $\mathbb M$, we have only to show that, for every $k<\omega$, $\{\phi(\overline{x},\overline{b}_{i,\sigma(i)},\overline{c}_{i,\sigma(i)})\;|\;i<k\}$ has a solution.
	%Let $\overline{x}=(x_1,\ldots,x_{m_1})$.
	Put $K_{\sigma,k}:= \overline{b}_{1,\sigma(1)}\overline{c}_{1,\sigma(1)}\ldots\overline{b}_{k,\sigma(k)}\overline{c}_{k,\sigma(k)}$.
	$(K_{\sigma,k}, K_{\sigma,k}\overline{x})$ is a safe pair. % for each $1 \leq i \leq k$.
	Therefore, we can find a solution using the axiom of $\randfrcolgT$.
	
	For every $i<\omega$ and $j, k<\omega$ with $j \neq k$, $\phi(\overline{x},\overline{b}_{i,j},\overline{c}_{i,j}) \wedge \phi(\overline{x},\overline{b}_{i,k},\overline{c}_{i,k})$ is inconsistent.
	In fact, if this formula has a solution $\overline{a}$, the graph $\overline{a}\overline{b}_{i,j}\overline{c}_{i,k}$ is a copy of $\compgraph$, which is a contradiction. 
\end{proof}
%
%\begin{remark}
%	Let $K$ be a copy of $K_{\overline{m}}\langle \{3,4,\ldots,n\} \rangle$ in $\mathbb M$.
%	It is easy to see that $M:=\{v \in \mathbb M\;|\; v \text{ is fully adjacent to }K\}$ is a model of $\randfrcolgT$ for $n=2$.
%	We can translate some results in \cite{CK} directly into the cases of $n>2$ using the above fact. 
%	For instance, we can show Proposition \ref{prop:fr_TP2} using this fact and \cite[Proposition 4.18]{CK} under the additional assumptions $m_1>1$ and $m_2>1$.
%	We can also show that there exists a formula which forks over $M$, but does not divide over $M$ in the same manner using \cite[Proposition 4.25]{CK}.
%\end{remark}

\begin{proposition}\label{prop:fr_SOP3}
	If $n>2$, $\randfrcolgT$ is $\mysop_3$.
\end{proposition}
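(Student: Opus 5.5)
We will exhibit an explicit $\mysop_3$ configuration. The idea is a ``cyclic'' adjacency pattern between three colour blocks: it can be realised along an infinite increasing chain without creating a copy of $\compgraph$, but any $3$-cycle of the formula forces a copy of $\compgraph$.

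\emph{The configuration.} Fix a copy $\overline{c}$ of $\compgraph\langle\{4,\ldots,n\}\rangle$; it is empty if $n=3$. For $i<\omega$, take pairwise disjoint tuples $\overline{u}_i,\overline{v}_i,\overline{w}_i$ consisting of $m_1$ vertices of colour $1$, $m_2$ vertices of colour $2$ and $m_3$ vertices of colour $3$, respectively. Join vertices by edges so that:
\begin{itemize}
\item $\overline{c}$ is fully adjacent to every $\overline{u}_i\overline{v}_i\overline{w}_i$;
\item for $i<j$, $\overline{u}_i$ is fully adjacent to $\overline{v}_j$, $\overline{v}_i$ is fully adjacent to $\overline{w}_j$, and $\overline{w}_i$ is fully adjacent to $\overline{u}_j$;
\item there are no other edges; in particular there are none inside a single triple $\overline{u}_i\overline{v}_i\overline{w}_i$.
\end{itemize}
Put $\overline{a}_i:=\overline{u}_i\overline{v}_i\overline{w}_i\overline{c}$.

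\emph{The graph is $\compgraph$-free.} A copy of $\compgraph$ must contain vertices $u,v,w$ of colours $1,2,3$ that are pairwise adjacent. Say $u\in\overline{u}_i$, $v\in\overline{v}_j$ and $w\in\overline{w}_k$. The only edges between these blocks are the ones listed above, so adjacency of $u$ and $v$ forces $i<j$, adjacency of $v$ and $w$ forces $j<k$, and adjacency of $w$ and $u$ forces $k<i$. This is a contradiction. Hence the graph is a model of $\frcolgT$, and by Theorem \ref{thm:fr_model_companion} it embeds into $\mathbb M$. This freeness check is the one point that needs care: the colour-$1$ part of a putative copy could mix vertices from several $\overline{u}_i$'s. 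The argument handles this because it uses only one vertex of each of the colours $1,2,3$.

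\emph{The formula.} Write $\overline{x}=\overline{x}^1\overline{x}^2\overline{x}^3\overline{x}^4$ and $\overline{y}=\overline{y}^1\overline{y}^2\overline{y}^3\overline{y}^4$, with lengths matching $\overline{a}_i$. Let $\phi(\overline{x},\overline{y})$ be the quantifier-free formula stating the following:
\begin{itemize}
\item each $\overline{x}^l$ and $\overline{y}^l$ for $l\le 3$ is a tuple of distinct vertices of colour $l$;
\item $\overline{x}^4=\overline{y}^4$, and this tuple is a copy of $\overline{c}$ (with the correct colours and complete adjacency);
\item $\overline{x}^4$ is fully adjacent to $\overline{x}^1\overline{x}^2\overline{x}^3$ and to $\overline{y}^1\overline{y}^2\overline{y}^3$;
\item $\overline{x}^1$ is fully adjacent to $\overline{y}^2$, $\overline{x}^2$ is fully adjacent to $\overline{y}^3$, and $\overline{x}^3$ is fully adjacent to $\overline{y}^1$.
\end{itemize}
By construction, $\mathbb M\models\phi(\overline{a}_i,\overline{a}_j)$ whenever $i<j$.

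\emph{The $3$-cycle is inconsistent.} Suppose $\overline{d}_1,\overline{d}_2,\overline{d}_3$ realise $\phi(\overline{x}_1,\overline{x}_2)\wedge\phi(\overline{x}_2,\overline{x}_3)\wedge\phi(\overline{x}_3,\overline{x}_1)$. Then:
\begin{itemize}
\item $\phi(\overline{d}_1,\overline{d}_2)$ makes $\overline{d}_1^1$ fully adjacent to $\overline{d}_2^2$;
\item $\phi(\overline{d}_2,\overline{d}_3)$ makes $\overline{d}_2^2$ fully adjacent to $\overline{d}_3^3$;
\item $\phi(\overline{d}_3,\overline{d}_1)$ makes $\overline{d}_3^3$ fully adjacent to $\overline{d}_1^1$.
\end{itemize}
All three share the common $\overline{d}^4$, which is a copy of $\overline{c}$ fully adjacent to each of these blocks. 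Hence $\overline{d}_1^1\overline{d}_2^2\overline{d}_3^3\overline{d}^4$ is a copy of $\compgraph$ in $\mathbb M$, which is absurd. Therefore $\randfrcolgT$ is $\mysop_3$.
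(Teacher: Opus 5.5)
Your proof is correct and is essentially the paper's argument: the same cyclic pattern ($\overline{u}_i$ to $\overline{v}_j$, $\overline{v}_i$ to $\overline{w}_j$, $\overline{w}_i$ to $\overline{u}_j$ for $i<j$) is embedded in $\mathbb M$, and a $3$-cycle of $\phi$ yields a copy of $\compgraph$. The paper differs in two details. It puts all colours $3,\ldots,n$ into the third block of each index, rather than sharing one fixed copy of $\compgraph\langle\{4,\ldots,n\}\rangle$ across all indices. It also checks $\compgraph$-freeness by a $j_{\max}$ case analysis, for which your one-vertex-per-colour argument forcing $i<j<k<i$ is a cleaner substitute.
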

\begin{proof}
	Let $\overline{a}$ be the tuple of vertices of color $1$ of length $m_1$.
	Let $\overline{b}$ be the tuple of vertices of color $2$ of length $m_2$.
	Let $\overline{c}$ be the complete graph of consisting $m_i$ vertices of color $i$ for $3 \leq i \leq n$.
	Let $(\overline{v}_j:=\overline{a}_j\overline{b}_{j}\overline{c}_{j}\;|\;j <\omega)$ be copies of $\overline{a}\overline{b}\overline{c}$.
	We join vertices by edges so that, if $i<j$,  $\overline{a}_i$ is fully adjacent to $\overline{b}_j$, $\overline{b}_i$ is fully adjacent to $\overline{c}_j$, and $\overline{c}_i$ is fully adjacent to $\overline{a}_j$,
	We show that $(\overline{v}_i\;|\;i<\omega)$ is $\compgraph$-free.
	Assume for contradiction that $(\overline{v}_i\;|\;i<\omega)$ contains a copy of $\compgraph$.
	Let $j_{\max}:=\max\{j \;|\; K \cap \overline{v}_j \neq \emptyset\}$.
	Choose $u \in \overline{v}_{j_{\max}} \cap K$.
	We only consider the case where $u$ is of color $1$, but we can lead to a contradiction in a similar manner in the other cases.
	There is no vertex of color $2$ in $\overline{v}_j$ adjacent to $u$ for $j \leq j_{\max}$, which contradicts that $K \subseteq \bigcup_{j \leq j_{\max}}\overline{v}_j$.% and there exists a vertex in $K$ of color $2$ adjacent to $u$.
	We have proved that $(\overline{v}_i\;|\;i<\omega)$ is $\compgraph$-free.
	By Theorem \ref{thm:fr_model_companion}, we may assume that $(\overline{v}_i\;|\;i<\omega)$ is a substructure of $\mathbb M$.
	
	Let $\overline{w}_i=\overline{x}_i\overline{y}_i\overline{z}_i$ for $i=1,2$.
	Let $\phi(\overline{w}_1,\overline{w}_2)$ denote the formula saying the following:
	\begin{itemize}
		\item $\overline{x}_1$ and $\overline{x}_2$ are tuples of vertices of color $1$ of length $m_1$;
		\item $\overline{y}_1$ and $\overline{y}_2$ are tuples of vertices of color $2$ of length $m_2$;
		\item $\overline{z}_1$ and $\overline{z}_2$ are copies of $\overline{c}$;
		\item $\overline{x}_1$ is fully adjacent to $\overline{y}_2$, $\overline{y}_1$ is fully adjacent to $\overline{z}_2$, and $\overline{z}_1$ is fully adjacent to $\overline{x}_2$.
	\end{itemize}
	We have $\mathbb M \models \phi(\overline{v}_i,\overline{v}_j)$ if $i < j$.
	We show that $\phi(\overline{w}_1,\overline{w}_2) \wedge \phi(\overline{w}_2,\overline{w}_3) \wedge \phi(\overline{w}_3,\overline{w}_1)$ is inconsistent.
	Assume for contradiction that it has a realization, say $\overline{v'}_1,\overline{v'}_2,\overline{v'}_3$.
	Let $\overline{a'}_i$ be the tuple of vertices of color $1$ in $\overline{v'}_i$, $\overline{b'}_i$ be the tuple of vertices of color $2$ in $\overline{v'}_i$, $\overline{c'}_i$ be the tuple of vertices of color $\neq 1,2$ in $\overline{v'}_i$ for $1 \leq i \leq 3$.
	It is obvious that $\overline{a'}_1\overline{b'}_2\overline{c'}_3$ is a copy of $\compgraph$, which is absurd. 
\end{proof}

\begin{proposition}\label{prop:fr_NSOP4}
	If $n>2$, $\randfrcolgT$ is $\mynsop_4$.
\end{proposition}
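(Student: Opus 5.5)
The plan is to adapt Conant's free-amalgamation criterion for $\mynsop_4$ (as in \cite{Conant}) to $\randfrcolgT$. Suppose for contradiction that $\phi(\overline{x},\overline{y})$ witnesses $\mysop_4$. By Ramsey and compactness we may take the witnessing sequence $(\overline{a}_i\;|\;i<\omega)$ to be indiscernible, and we may also pass to a small model of $\frcolgTc$ from which it is indiscernible. Concretely, let $A:=\myacl(\overline{a}_0\overline{a}_1)$. By Proposition \ref{prop:fr_monster_basic}(3), $A$ is a model of $\frcolgTc$. Let $C:=\myacl(\overline{a}_0)\cap\myacl(\overline{a}_1)$, which is also a model of $\frcolgTc$ as an intersection of such models.

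The goal is to produce $\overline{d}_1,\overline{d}_2,\overline{d}_3,\overline{d}_4$ forming a $4$-cycle with $\models\phi(\overline{d}_k,\overline{d}_{k+1})$ for all $k$ (indices mod $4$), contradicting the $\mysop_4$ condition. The cycle is built by iterated free amalgamation.

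First, take two copies $A_{12}$ and $A_{23}$ of $A$, glued along the copy of $\myacl(\overline{a}_1)$ in one and $\myacl(\overline{a}_0)$ in the other. This gluing is legitimate because $\mytp(\overline{a}_0)=\mytp(\overline{a}_1)$, so $\myacl(\overline{a}_0)\cong\myacl(\overline{a}_1)$ over the correspondence. Form the free amalgam over $\myacl(\overline{d}_2)$. Its vertex sets are models of $\frcolgTc$, so Lemma \ref{lem:amalgam_free_colored} gives that the amalgam is $\compgraph$-free.

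Repeat the construction to get a path $\overline{d}_1,\overline{d}_2,\overline{d}_3,\overline{d}_4$. Then close the cycle by amalgamating with a copy of $A$ realizing $(\overline{d}_4,\overline{d}_1)$ over $\myacl(\overline{d}_4)\cup\myacl(\overline{d}_1)$. This last gluing is not over a single model of $\frcolgTc$, and this is where $4$ matters. In the path, $\myacl(\overline{d}_1)$ and $\myacl(\overline{d}_4)$ are at distance $3$, so they are fully separated outside $C$-copies. So $D:=\myacl(\overline{d}_4)\otimes\myacl(\overline{d}_1)$ over the common part behaves like a free amalgam, and I must check two things: that the union of the path with the new copy of $A$ is $\compgraph$-free, and that $D$ embeds into $A$ consistently.

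By Proposition \ref{prop:fr_monster_basic}(1),(2), the resulting $\compgraph$-free graph, after applying $F(\cdot)$ from Lemma \ref{lem:existence_free_completion}, embeds into $\mathbb M$. Each consecutive pair $(\overline{d}_k,\overline{d}_{k+1})$ sits inside an isomorphic copy of $A=\myacl(\overline{a}_0\overline{a}_1)$, and $A$ is a model of $\frcolgTc$. Hence Proposition \ref{prop:fr_monster_basic}(2) gives $\mytp(\overline{d}_k\overline{d}_{k+1})=\mytp(\overline{a}_0\overline{a}_1)$, so $\phi(\overline{d}_k,\overline{d}_{k+1})$ holds for each $k$, contradicting the $\mysop_4$ witness.

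The main obstacle is the $\compgraph$-freeness check for the closing step. A copy $K$ of $\compgraph$ must be complete multipartite. Vertices in distinct "non-adjacent" pieces of the cycle are fully separated, so they must share a colour. I would argue as in Lemma \ref{lem:amalgam_free_colored}: the part of $K$ outside a single piece is monochromatic of some colour $c$, and the rest contains a copy of $\subcompgraph^c$ inside some $\myacl(\overline{d}_k)$. That set is a model of $\frcolgTc$, so $m_c-1$ vertices fully adjacent to this copy already exist there. This yields a copy of $\compgraph$ inside a single copy of $A$, which is absurd. With $4$ pieces, any clique meets at most two consecutive pieces plus shared closures, because opposite pieces are fully separated. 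This is exactly what fails for $3$ pieces, and it matches Proposition \ref{prop:fr_SOP3}.

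I would also try first to simply invoke Conant's general theorem \cite[Theorem 1.1 / free amalgamation theories]{Conant}. To do so, I would verify that $\myacl$ together with free amalgam over $\frcolgTc$-models gives a "free amalgamation theory" in his sense, using Proposition \ref{prop:fr_monster_basic} and Lemma \ref{lem:amalgam_free_colored}. His axioms are essentially the content of the argument above.
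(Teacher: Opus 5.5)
Your path $\overline d_1\overline d_2\overline d_3\overline d_4$ is fine, since each gluing is a free amalgam over a model of $\frcolgTc$. The closing step, however, has a genuine gap.

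\textbf{The closing gluing is not well defined.} In the path, $\myacl(\overline d_4)$ and $\myacl(\overline d_1)$ are fully separated off their common part. In $A=\myacl(\overline a_0\overline a_1)$, by contrast, there are in general edges between $\myacl(\overline a_0)$ and $\myacl(\overline a_1)$: this happens whenever $\phi$ forces adjacency between $\overline x$ and $\overline y$, as the formula in Proposition~\ref{prop:fr_SOP3} does. So $D$ does not sit inside $A$ via $\overline d_4\mapsto\overline a_0$, $\overline d_1\mapsto\overline a_1$. To insert $A_{41}$ you must add new edges between $\myacl(\overline d_4)$ and $\myacl(\overline d_1)$. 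After that, the graph is no longer an iterated free amalgam over models of $\frcolgTc$, and Lemma~\ref{lem:amalgam_free_colored} says nothing about it. Gluing the paths $\overline d_1\overline d_2\overline d_3$ and $\overline d_3\overline d_4\overline d_1$ along the opposite pair $\myacl(\overline d_1)\cup\myacl(\overline d_3)$ would be consistent, but that base is not a model of $\frcolgTc$ either.

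\textbf{More decisively, your $\compgraph$-freeness argument is false.} Full separation of opposite pieces only forces the vertices of $K$ there to share a colour. So $K$ may take colour $c$ from $\overline d_1,\overline d_3$, colour $c'$ from $\overline d_2,\overline d_4$, and the other colours from the shared part.

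For a concrete instance, let $m_c=m_{c'}=2$ and realize in $\mathbb M$ the tuples $\overline a_k=(u_k,v_k,\overline q)$, $k=0,1$, where:
\begin{itemize}
\item $\mycol(u_k)=c$ and $\mycol(v_k)=c'$;
\item $\overline q$ is a copy of $\compgraph\langle\{1,\ldots,n\}\setminus\{c,c'\}\rangle$ fully adjacent to $u_0,v_0,u_1,v_1$;
\item $u_0v_1$ and $v_0u_1$ are the only other edges.
\end{itemize}
Then $\overline a_0\overline a_1$ is $\compgraph$-free, $\myacl(\overline a_k)=\overline a_k$, and $\mytp(\overline a_0)=\mytp(\overline a_1)$ by Proposition~\ref{prop:fr_monster_basic}. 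Write $d_{k,u},d_{k,v}$ for the corresponding coordinates of $\overline d_k$. In your cycle, $\overline q$ is shared by all pieces, and once the edges of $A_{41}$ are added, $\{d_{1,u},d_{3,u},d_{2,v},d_{4,v}\}\cup\overline q$ is a copy of $\compgraph$.

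Hence no argument that, like yours, uses only $\myacl(\overline a_0\overline a_1)$ and $\mytp(\overline a_0)=\mytp(\overline a_1)$ can succeed. This pair is excluded only because it cannot occur in the indiscernible sequence: there $u_0,u_1,v_2,v_3,\overline q$ would form a copy of $\compgraph$.

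\textbf{What the paper does instead.} It reduces $\phi$ to a basic existential formula and forms the minimally adjacent $4$-cycle. A case analysis on $K\cap\overline b_i$ and $K\cap\overline c_{ij}$ then shows that a copy $K$ of $\compgraph$ in it must do one of three things:
\begin{itemize}
\item lie in a $3$-path, contradicting that $\phi(\overline x_1,\overline x_2)\wedge\phi(\overline x_2,\overline x_3)\wedge\phi(\overline x_3,\overline x_4)$ is realized by $\overline a_1,\ldots,\overline a_4$;
\item use only two colours, contradicting $n>2$;
\item contain a nonempty part with no neighbours in $K$.
\end{itemize}
Your sketch uses neither the longer segment of the sequence nor $n>2$.

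\textbf{Conant's theorem cannot simply be invoked.} Here a free amalgam of algebraically closed sets over an algebraically closed base need not be algebraically closed. A copy of $\subcompgraph^{c}$ whose $c'$-vertices are split between the two sides has its completing vertices outside the union. So the closure property his framework requires fails.
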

\begin{proof}
	Assume for contradiction that $\randfrcolgT$ is $\mysop_4$.
	Let $\phi(\overline{x},\overline{y})$ and $(\overline{a}_i\;|\;i<\omega)$ be witnesses of $\mysop_4$, that is $\psi(\overline{x}_1,\overline{x}_2,\overline{x}_3,\overline{x}_4):=\phi(\overline{x}_1,\overline{x}_2) \wedge \phi(\overline{x}_2,\overline{x}_3) \wedge \phi(\overline{x}_3,\overline{x}_4) \wedge \phi(\overline{x}_4,\overline{x}_1)$ is inconsistent and $\mathbb M \models \phi(\overline{a}_i,\overline{a}_j)$ if $i<j$.
	By \cite[Lemma 5.1.3]{TZ}, we may assume that $(\overline{a}_i\;|\;i<\omega)$ is an indiscernible sequence over $\emptyset$.
	By Proposition \ref{prop:almost_qe}, there exist basic existential formulas $\chi_1(\overline{x},\overline{y}), \ldots,  \chi_k(\overline{x},\overline{y})$ such that $\phi(\overline{x},\overline{y})=\bigvee_{i=1}^k \chi_i(\overline{x},\overline{y})$.
	We may assume that $\mathbb M \models \chi_1(\overline{a}_1,\overline{a}_2)$ without loss of generality.
	Since $(\overline{a}_i\;|\;i<\omega)$ is indiscernible, we have $\mathbb M \models  \chi_1(\overline{a}_i,\overline{a}_j)$ for $i<j$.
	It is obvious that $\chi_1(\overline{x}_1,\overline{x}_2) \wedge \chi_1(\overline{x}_2,\overline{x}_3)  \wedge \chi_1(\overline{x}_3,\overline{x}_4) \wedge \chi_1(\overline{x}_4,\overline{x}_1)$ is inconsistent.
	Therefore, we may assume that $\phi$ is a basic existential formula by replacing $\phi$ with $\chi_1$.
	
	Since $\phi$ is a basic existential formula, there exists a pair of graphs $(A,B)$ such that $\phi(\overline{x},\overline{y})=\exists \overline{z}\ \mydiag_{A,B}(\overline{x}{}^\frown\overline{y},\overline{z})$.
	We put $\zeta(\overline{x}, \overline{y},\overline{z}):=\mydiag_{A,B}(\overline{x}{}^\frown\overline{y},\overline{z})$.
	There is a model $C=(\overline{b}_1,\overline{b}_2,\overline{b}_3, \overline{b}_4,\overline{c}_{12}, \overline{c}_{23}, \overline{c}_{34},\overline{c}_{41})$ of $\colorgT$ satisfying the following conditions:
	\begin{enumerate}
		\item[(1)] $\mathbb M \models \zeta(\overline{b}_i,\overline{b}_j,\overline{c}_{ij})$ holds for $(i,j)=(1,2), (2,3), (3,4), (4,1)$;
		\item[(2)] Vertices in $C$ are minimally adjacent, that is, two vertices $p$ and $q$ in $C$ are adjacent to each other exactly when condition (1) does not hold if $p$ and $q$ are separated.
	\end{enumerate}
	Assume for contradiction that $C$ is $\compgraph$-free.
	We can embed $C$ into $\mathbb M$ by Theorem \ref{thm:fr_model_companion}.
	This means that $\psi$ has a realization, which contradicts that $\psi$ is inconsistent.
	Therefore, $C$ contains a copy of $\compgraph$, say $K$.
	Put $K_i:=K \cap \overline{b}_i$ and $K_{ij}:=K \cap \overline{c}_ij$.
	
	Put $$\eta(\overline{x}_1,\overline{x}_2,\overline{x}_3,\overline{x}_4):=\phi(\overline{x}_1,\overline{x}_2) \wedge \phi(\overline{x}_2,\overline{x}_3) \wedge \phi(\overline{x}_3,\overline{x}_4).$$
	Observe that $\eta(\overline{x}_1,\overline{x}_2,\overline{x}_3,\overline{x}_4)$ is consistent because $\mathbb M \models \eta(\overline{a}_1,\overline{a}_2,\overline{a}_3,\overline{a}_4)$.
	
	We consider separate cases. 
	Put $\mathcal I:=\{(1,2), (2,3), (3,4), (4,1)\}$.
	First consider the case in which $K_i=K_{ij}=\emptyset$ for some $(i,j)\in \mathcal I$.
	We may assume that $(i,j)=(4,1)$ without loss of generality.
	The formula $\eta(\overline{x}_1,\overline{x}_2,\overline{x}_3,\overline{x}_4)$ is inconsistent in $\randfrcolgT$, which is absurd.  
	
	Next suppose at least one of $K_i$ and $K_{ij}$ is not empty for every $(i,j)\in \mathcal I$.	
	Let us consider the case in which $K_2 \neq \emptyset$.
	By the assumption, at least one of $K_4$ and $K_{41}$ is not empty.
	Let $L \in \{K_4,K_{34},K_{41}\}$ with $L \neq \emptyset$.
	Since $K_2$ is fully separated with $L$, $K_2$ and $L$ are monochromatic and have the same color, say $c$.
	Since $K_{12}$ and $K_{23}$ are fully separated with $L$, each of them is either empty or is monochromatic of color $c$.
	If $K_1 \neq \emptyset$ and $K_3 \neq \emptyset$, they are monochromatic and have the same color, say $c'$, because they are fully separated with each other.
	This implies that $K$ has only two colors, which contradicts $n>2$.
	We have $K_1 = \emptyset$ or $K_3 = \emptyset$.
	Assume $K_1 = \emptyset$ without loss of generality.
%	If $K_{41}=\emptyset$, $\eta(\overline{x}_1,\overline{x}_2,\overline{x}_3,\overline{x}_4)$ is inconsistent, which is absurd.
%	Therefore, $K_{41} \neq \emptyset$.
	Assume for contradiction that $K_{12} \neq \emptyset$.
	Since $K_{12}$ and $K_2$ are monochromatic of color $c$, they are fully separated.
	Recall that $K_1=\emptyset$ and $K_{12}$ is fully separated with $K_i$ and $K_{ij}$ other than $K_1$ and $K_2$.
	Therefore, vertices in $K_{12}$ cannot be adjacent to vertices in $K \setminus K_{12}$, which contradicts the definition of complete graphs. 
	We have $K_{12}=\emptyset$.
	However, we have $K_1=K_{12}=\emptyset$, which contradicts the case hypothesis.
	
	Let us consider the case in which $K_2 = \emptyset$ and $K_{23} \neq \emptyset$.
	If $K_1=\emptyset$, $K_{12}=\emptyset$ because $K_{12}$ is fully separated with $K_i$ and $K_{ij}$ other than $K_1$ and $K_2$.
	However, this contradicts the case hypothesis.
%	This implies that $\eta(\overline{x}_2,\overline{x}_3,\overline{x}_4,\overline{x}_1)$ is inconsistent, which is absurd.
	Next suppose $K_1 \neq \emptyset$.
	Since $K_1$ is fully separated with $K_{23}$, they are monochromatic and have the same color, say $c$.
	If $K_{12}=\emptyset$, $\eta(\overline{x}_2,\overline{x}_3,\overline{x}_4,\overline{x}_1)$ is inconsistent, which is absurd.
	If $K_{12} \neq \emptyset$, $K_{12}$ is monochromatic and of color $c$ because $K_{12}$ is fully separated with $K_{23}$.
	Since $K_1$ and $K_{12}$ are monochromatic and of color $c$, they are fully separated with each other.
	In addition, we have $K_2=\emptyset$ by the case hypothesis.
	Vertices in $K_{12}$ cannot be adjacent to vertices in $K \setminus K_{12}$, which contradicts the definition of complete graphs. 
\end{proof}

\begin{definition}[{\cite{Adler,TZ}}]
	Let $\mathcal L$ be a language and $T$ be a complete $\mathcal L$-theory.
	Let $\mathbb M$ be a monster model.
	Let $A \subseteq \mathbb M$ be a small set.
	Let $\phi(\overline{x},\overline{y})$ be an $\mathcal L(A)$-formula.
	We say that a formula $\phi(\overline{x},\overline{b})$ \textit{($k$-)divides} over $A$ if there exists a sequence $(\overline{b}_i\;|\;i<\omega)$ of realizations of $\mytp(\overline{b}/A)$ such that $(\phi(\overline{x},\overline{b}_i)\;|\;i<\omega)$ is $k$-inconsistent, that is, for any $k$-element subset $I$ of $\mathbb N$, $\bigwedge_{i \in I} \phi(\overline{x},\overline{b}_i)$ is inconsistent.
	A set of formula $\pi(\overline{x})$ \textit{divides} over $A$ if $\pi(\overline{x})$ implies some  $\phi(\overline{x},\overline{b})$ which divides over $A$.
	The set of formula $\pi(\overline{x})$ \textit{forks} over $A$ if $\pi(\overline{x})$ implies a disjunction $\bigvee_{i=1}^l \phi_i(\overline{x},\overline{b}_i)$ of formulas $\phi_i(\overline{x},\overline{b}_i)$ each dividing over $A$.

	Let $A,B,C$ be subsets of $\mathbb M$.
	We write ${A}\myindalg_{C}{B}$ if and only if $\myacl(AC) \cap \myacl(BC)=\myacl(C)$.
	We write ${A}\myinddiv_{C}{B}$ if and only if one of (both of) the following equivalent conditions is satisfied:
	\begin{itemize}
		\item For any finite tuples $\overline{a}$ from $A$, $\mytp(\overline{a}/BC)$ does not imply an $\mathcal L(C)$-formula $\phi(\overline{x},\overline{b})$ for $\overline{b} \subseteq B$ which divides over $C$;
		\item For any $C$-indiscernible sequence $(\overline{b}_i\;|\ i<\omega)$ with $\overline{b}_0 \in BC$, there exists $A'$ such that $A' \equiv_{BC} A$ and the sequence $(\overline{b}_i\;|\ i<\omega)$ is $A'C$-indiscernible.
	\end{itemize}
	Here, $A' \equiv_{BC} A$ means that there is an automorphism of the monster model fixing $BC$ pointwise and maps $A'$ to $A$.
	We write ${A}\myindfork_{C}{B}$ if and only if one of (both of) the following equivalent conditions is satisfied:
	\begin{itemize}
		\item For any finite tuples $\overline{a}$ from $A$, $\mytp(\overline{a}/BC)$ does not imply an $\mathcal L(C)$-formula $\phi(\overline{x},\overline{b})$ for $\overline{b} \subseteq B$ which forks over $C$;
		\item For any small subset $\widehat B$ of $\mathbb M$ containing $B$, there exists $A'$ such that $A' \equiv_{BC}A$ and $A' \myinddiv_C\widehat{B}$.
	\end{itemize}
\end{definition}

\begin{definition}\label{def:fine_cover}
	Let $\overline{k}=(k_1,\ldots,k_n)$ be a tuple of nonnegative integers.
	Let $B,C$ be graphs with $B \cap \myacl(C)=\emptyset$.
	We say that $B$ has a \textit{nice $\overline{k}$-cover over $C$} if there exist subgraphs $U_1,\ldots U_l$ of $B$ satisfying the following:
	\begin{enumerate}
		\item[(i)] $U_i$ is a complete graph (possibly a singleton) for $1 \leq i \leq l$, and the equality $B=\bigcup_{i=1}^l U_i$ holds.
		\item[(ii)] Put $\mathfrak n_{ij}:=|\{v \in U_i\;|\; \mycol(v)=j\}|$.
		Then, $\sum_{i=1}^l \mathfrak n_{ij}=k_j$ for $1 \leq j \leq n$.
		\item[(iii)] Let $M_i$ be copies of $B \cup \myacl(C)$ with $M_0=B \cup \myacl(C)$ for $i<\omega$.
		The copy of $U_j$ in $M_i$ is denoted by $U_{ji}$. 
		We construct graphs $N_i$ for $i<\omega$ so that $M_{i'}$ is a subgraph of $N_i$ for $i' \leq i$  as follows:
		Put $N_0:= M_0$.
		For $i>0$, $N_i$ be the graph constructed from $N_{i-1} \otimes_{\myacl(C)} M_i$ by joining vertices in $M_i$ with vertices in $N_{i-1}$ by edges so that $U_{ij}$ is fully adjacent to $U_{i'j'}$ for every $i'<i$ and $1 \leq j'<j \leq l$, but $U_{ij}$ is fully separated from $U_{i'j'}$ for every $i'<i$ and $1 \leq j< j' \leq l$. 
		Then, $N_i$ is $\compgraph$-free for all $i<\omega$.
	\end{enumerate}
\end{definition}

\begin{theorem}\label{thm:dividing}
	Suppose $n>2$.
	In $\randfrcolgT$, $A \myinddiv_CB$ if and only if $A \myindalg_C B$ and there are no graphs $G$ in $\myacl(AC) \cup \myacl(BC)$ called exceptional graphs (with respect to $C$) satisfying one of the following conditions, where $G[c]:=\{v \in G\;|\; \mycol(v)=c\}$ for $1 \leq c \leq n$, $G_A:=G \cap \myacl(AC) \setminus \myacl(C)$, $G_B:=G \cap \myacl(BC) \setminus \myacl(C)$ and $G_C:=G \cap \myacl(C)$.
	\begin{enumerate}
		\item[(1)] Exceptional graphs $G$ of type $1$ satisfy the following:
		\begin{enumerate}
			\item[(a)] $G_A \neq \emptyset$ and $G_B \neq \emptyset$.
			The inequality $1 \leq |G[c]| \leq m_c$ holds for every $1 \leq c \leq n$, and
			$|G[c]| =m_c$ holds for $c \notin \mycol(G_B)$.
			\item[(b)] Every two vertices $p$ and $q$ in $G$ are adjacent to each other unless $p,q \in G_B$.
			\item[(c)] Put $k_c=m_c-|\{v \in G_A \cup G_C\;|\;\mycol(v)=c\}|$ for $1 \leq c \leq n$ and $\overline{k}=(k_1,\ldots,k_n)$.
			$G_B$ has a nice $\overline{k}$-cover over $C$.
		\end{enumerate} 
		\item[(2)] Exceptional graphs $G$ of type $2$ satisfy the following:
		\begin{enumerate}
			\item[(a)] $G_A \neq \emptyset$ and $G_B \neq \emptyset$;
			\item[(b)] $|\mycol(G_A)|=1$, $\mycol(G_A) \cap \mycol(G_B)=\emptyset$, and $1 \leq |G[c]| \leq m_c$ for $1 \leq c \leq n$, $|G[c]| \neq m_c$ if $c \in \mycol(G_A)$ and $|G[c]|=m_c$ if $c \in \mycol(G_C) \setminus \mycol(G_B)$;
			\item[(c)] %$G_A$ is fully adjacent to $G_B \cup G_C$;
			Every two vertices $p$ and $q$ in $G$ are adjacent to each other unless $p,q \in G_B$.
%			\item[(d)] $G$ does not contain a copy of $\subcompgraph^{c_A}$, where $c_m$ is the unique color in $\mycol(G_A)$.
			\item[(d)] Let $\mycol(G_A)=\{c_A\}$. There are no $v \in \myacl(C)$ of color $c_A$ fully adjacent to $G_B \cup G_C$ other than the vertices in $G_C$.
			\item[(e)] Put $k_c=0$ if $c =c_A$ and $k_c:=m_c-|\{v \in G_C\;|\;\mycol(v)=c\}|$ otherwise.
			Put $\overline{k}=(k_1,\ldots,k_n)$.
			$G_B$ has a nice $\overline{k}$-cover over $C$.
%			\item[(f)] Put $R_G:=\{v \in \myacl(C)\;|\; \mycol(v)=c_A, v \text{ is fully adjacent to } G_A \cup G_C\}$. Then, $|R_G| <m_{c_A}$.
		\end{enumerate} 
		\item[(3)] Exceptional graphs $G$ of type $3$ satisfy the following:
		\begin{enumerate}
			\item[(a)] $G_A \neq \emptyset$ and $|G_B| \geq 2$.
			\item[(b)] There exist $1 \leq c_0 \leq n$, a vertex $p_0$ of color $c_0$ and a copy $Q$ of $\subcompgraph^{c_0}$ such that $G=Q \cup \{p_0\}$ and $p_0 \in G_B$.
			\item[(c)] $p_0$ is fully adjacent to $G_A \cup G_C$.
		\end{enumerate} 
	\end{enumerate}
\end{theorem}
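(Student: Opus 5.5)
We prove the two directions separately, using the indiscernible-sequence characterization of $\myinddiv$ together with Proposition \ref{prop:fr_monster_basic}, which reduces type questions over a model of $\frcolgTc$ to questions about $\colorgL$-embeddings of $\compgraph$-free graphs. Throughout we may replace $A$, $B$, $C$ by $\myacl(AC)$, $\myacl(BC)$ and $\myacl(C)$. These are models of $\frcolgTc$ by Claim A in Proposition \ref{prop:fr_monster_basic}(3), and replacing sets by their algebraic closures does not change dividing independence.

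\emph{Necessity.} Suppose first that $A \myindalg_C B$ fails, so some $b \in \myacl(AC)\cap\myacl(BC)\setminus\myacl(C)$ exists. Then $x=b$ divides over $C$, because $\mytp(b/C)$ is nonalgebraic and a $C$-indiscernible sequence of conjugates of $b$ consists of distinct elements. Next suppose there is an exceptional graph $G$. We will build a $C$-indiscernible sequence $(G_{B,i}\;|\;i<\omega)$ of copies of $G_B$ over $\myacl(C)$ for which the formula $\phi(\overline{x},\overline{g}_B)$ expressing ``$\overline{x}$ together with $G_C$ and $\overline{g}_B$ forms the configuration $G$'' is $k$-inconsistent. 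The sequence is realized by the graphs $N_i$ of Definition \ref{def:fine_cover}(iii): the nice cover guarantees that the $N_i$ are $\compgraph$-free, so by Proposition \ref{prop:fr_monster_basic}(1) they embed over $\myacl(C)$, and by Ramsey and compactness we may take the sequence indiscernible. The point of the cover construction is that the $U_j$ in different copies are fully adjacent ``upward''. Choosing one vertex set $U_{j,i_j}$ from distinct indices $i_1<\dots<i_l$ therefore produces a complete graph whose color counts are exactly $\overline{k}$. Combined with $G_A\cup G_C$, this completes a copy of $\compgraph$, using condition (1)(a),(c) for type 1 and condition (2)(b),(e) for type 2. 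In type 2, condition (d) ensures that $m_{c_A}$ vertices of color $c_A$ are still missing, so that after the $G_A$ vertices are counted, realizations over $l$ distinct copies would create a $\compgraph$. Type 3 is simpler: the $m_{c_0}-1$ vertices fully adjacent to $Q\cap(G_A\cup G_C)$ are pinned down by $\frcolgTc$, and two distinct conjugates of $p_0$ over $C$ would yield too many vertices of color $c_0$ adjacent to a copy of $\subcompgraph^{c_0}$. Hence $\mytp(A/BC)$ divides in all three cases.

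\emph{Sufficiency.} Assume $A\myindalg_C B$ and that no exceptional graph exists. Let $(B_i\;|\;i<\omega)$ be $C$-indiscernible with $B_0=B$. Form the graph $D$ obtained by freely amalgamating a copy $A_i$ of $A$ with $\bigcup_j B_j$ in such a way that each $A B_j$ is isomorphic to $AB$ over $C$. Concretely, $A$ is joined to each $B_j$ exactly as it is to $B$, and $A$ is fully separated from everything else. We claim $D$ is $\compgraph$-free. Granting this, Proposition \ref{prop:fr_monster_basic}(1),(2) embed $D$ over $C\cup\bigcup B_j$, which gives $A'\equiv_{BC}A$ with $A'B_j\equiv_C AB$ for all $j$. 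A further Ramsey and compactness argument then upgrades this to $A'C$-indiscernibility of the sequence.

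The main obstacle is the claim that $D$ is $\compgraph$-free. Suppose a copy $K$ of $\compgraph$ lies in $D$. Since $AB$ itself is $\compgraph$-free, $K$ must meet $A$ and at least two of the $B_j$'s, or else it must use vertices of $B_j\setminus C$ together with nonadjacency structure coming from the indiscernible sequence. We will analyze $K$ by color classes. Vertices of $K$ in different $B_j$'s that are separated must share a color, because $K$ is complete multipartite. By indiscernibility, the pattern of adjacency among the $K\cap B_j$ pulls back to a nice cover of some $G_B\subseteq B$, since the $U$'s are the complete pieces and the ordering of indices gives the upward adjacency. Three cases then arise. If $K\cap A$ has several colors, we obtain a type 1 exceptional graph. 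If $K\cap A$ is monochromatic of a color not appearing in the $B$-part, we obtain type 2; here condition (d) must be checked using the fact that $C$ is a model of $\frcolgTc$. If $K$ uses a single vertex of $B_j\setminus C$ whose neighborhood is completed via the other $B_i$'s, we obtain type 3. Getting this case split exhaustive, and verifying the counting identities $\sum_i\mathfrak n_{ij}=k_j$, is where I expect most of the work to lie. The hypothesis $n>2$ enters in the same way as in Proposition \ref{prop:fr_NSOP4}, namely to exclude degenerate two-color configurations.
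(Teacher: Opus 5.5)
\textbf{Sufficiency.} Your graph $D$ consists only of $X=\myacl(C\cup\bigcup_j B_j)$ and one copy $A'$ of $\myacl(AC)$, and this does not give $A'B_j\equiv_C AB$. Proposition \ref{prop:fr_monster_basic}(2) needs the partial isomorphism to be defined on a model of $\frcolgTc$, and $\myacl(AC)\cup\myacl(BC)$ is usually not one. A copy of $\subcompgraph^{c}$ split between the two sides has its $m_c-1$ completing vertices in $\myacl(ABC)$, outside both sets. How these vertices (and other existential witnesses) attach to the rest is part of $\mytp(A/BC)$, but $D$ does not control it.

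The paper never tries to realize full types. By Proposition \ref{prop:almost_qe} it suffices to keep one basic existential formula $\exists\overline z\,\mydiag_{G_1,G_2}(\overline x\,\overline b_i,\overline z)$ consistent along the sequence, and such formulas transfer along embeddings. The witness is split into three parts:
\begin{itemize}
\item $\overline d$, algebraic over $C\overline b$, moved by $\sigma_i$;
\item $\overline e$, tightly adjacent to $\overline a$, merged into $\overline a$;
\item $\overline f$, loosely adjacent and non-algebraic, with a separate copy $\overline{f'}_i$ added for every $i$.
\end{itemize}
The $\compgraph$-freeness of $\mathfrak Y$ must then handle copies $K$ meeting some $\overline{f'}_k$, and that is exactly where type-3 graphs arise (Claim 6). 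In your $D$, every copy of $\compgraph$ meets $A'$ and no witnesses are duplicated, so only the analogues of Claims 2--5 (types 1 and 2) can occur. Your proposed type-3 mechanism matches no configuration in $D$. Adding a copy of $\myacl(ABC)$ over each $A'\cup B_j$ fixes the type problem, but that is the paper's $\mathfrak Y$ with its full case analysis.

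\textbf{Necessity.} Type 1 follows the paper. For type 2 your mechanism fails: since $|G[c_A]|<m_{c_A}$, solutions over $l$ copies do not produce a $\compgraph$. The paper instead uses the following chain:
\begin{itemize}
\item $G_C\cup\bigcup_j U_{i_jj}$ contains a copy of $\subcompgraph^{c_A}$, so by $\frcolgTc$ any solution lies among its $m_{c_A}-1$ completing vertices;
\item for each increasing $l$-tuple $\overline i$ it inserts fresh tuples $\overline p\langle\overline i\rangle$ of such vertices, which is allowed by (d);
\item two distinct $l$-subsets of an $(l+1)$-set force a common solution into disjoint tuples, so the formulas are $(l+1)$-inconsistent.
\end{itemize}

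For type 3 your step is invalid, because $Q\cap(G_A\cup G_C)$ is not a copy of $\subcompgraph^{c_0}$ (note $Q\cap G_B\neq\emptyset$). It also cannot be repaired, since the type-3 clause is too weak. Take $n=3$ and $\overline m=(1,1,1)$; then $\myacl$ is trivial and every $\compgraph$-free graph is a model of $\frcolgTc$. Let $C=\{e\}$, $A=\{a\}$, $B=\{b,p\}$, with colours $1,1,2,3$ for $a,e,b,p$ and edges exactly $ab,ap,eb,ep$. Then $G=\{a,b,p\}$, with $Q=\{a,b\}$ and $p_0=p$, is exceptional of type 3, and $A\myindalg_CB$.

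Nevertheless $A\myinddiv_CB$. For any $e$-indiscernible $(b_ip_i)_{i<\omega}$ in $\mytp(bp/e)$, no $b_i$ is adjacent to any $p_j$, since $e,b_i,p_j$ would then span a $\compgraph$. So a new colour-$1$ vertex joined to all $b_i,p_i$ embeds over $\{e\}\cup\{b_i,p_i\}_i$ by Proposition \ref{prop:fr_monster_basic}(1). By (2), it realizes the conjugate of $\mytp(a/ebp)$ over every $eb_ip_i$.

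The paper's type-3 construction breaks on the same example: its added edges $b_ip_{0,j}$ close triangles with $e$. What Claim 6 actually produces is an adjacency between $\sigma_{l_X}(p_0)$ and $\sigma_{k_F}(Q\cap G_B)$ across two members of the sequence, and the type-3 clause does not record this.
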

\begin{proof}
	\textbf{We show the `if' part.}
%	In any theory, $A \myinddiv_CB$ implies $A \myinddiv_{\myacl(C)}\myacl(BC)$ by \cite[Remark 5.4(3)]{Adler}.
%	Therefore, we may assume that $C=\myacl(C)$ and $B=\myacl(BC)$.
	Let $\overline{a}$ be a tuple of finite length from $A \cup \myacl(C)$.
	Let $\psi(\overline{x},\overline{b})$ be an arbitrary formula such that $\overline{b} \subseteq \myacl(BC)$ and $\mathbb M \models \psi(\overline{a},\overline{b})$.
	Let $(\overline{b}_i\;|\;i<\omega)$ be an indiscernible sequence over $C$ with $\overline{b}_0=\overline{b}$.
	In order to show $A \myinddiv_CB$, we have only to show that $\{\psi(\overline{x},\overline{b}_i)\;|\;i<\omega\}$ is consistent by \cite[Lemma 7.1.4]{TZ}.
	
	Suppose $\overline{a} \cap \myacl(C) \neq \emptyset$.
	Let $l=|\overline{a}|$ and $a_j$ be the $j$-th coordinate of $\overline{a}$.
	We may assume that $a_j \notin \myacl(C)$ if and only if $1 \leq j \leq k$.
	Put $\overline{a'}:=(a_1,\ldots,a_k)$ and $\overline{b'}:=(a_{k+1},\ldots,a_n)$.
	Let $\psi'(\overline{x'},\overline{b'},\overline{b})$ be the formula obtained from $\psi(\overline{x},\overline{b})$ by substituting $a_j$ to the $j$-th coordinate for $k<j \leq n$.
	The sequence $(\overline{b}_i\overline{b'}\;|\; i<\omega)$ is indiscernible over $C$ and, if $\{\psi'(\overline{x'},\overline{b'},\overline{b})\;|\;i<\omega\}$ is consistent, $\{\psi(\overline{x},\overline{b}_i)\;|\;i<\omega\}$ is also consistent.
	Therefore, we may assume that $\overline{a} \cap \myacl(C) = \emptyset$ without loss of generality.
	
	By Proposition \ref{prop:almost_qe}, $\psi$ is a union of finitely many basic existential formulas.
	At least one of these basic formulas belongs to $\mytp(\overline{a}/C\overline{b})$.
	Therefore, we may assume that $\psi$ is a basic existential formula without loss of generality.
	There exists a pair of graphs $(G_1,G_2)$ such that $\psi(\overline{x},\overline{y})= \exists \overline{z}\ \mydiag_{G_1,G_2}(\overline{x}{}^\frown\overline{y},\overline{z})$.
	Since we have $\mathbb M \models \psi(\overline{a},\overline{b})$, there exists $\overline{w} \subseteq \mathbb M$ such that $\mathbb M \models \mydiag_{G_1,G_2}(\overline{a}{}^\frown\overline{b},\overline{w})$.
	
	For every vertex $v \in \overline{w}$, $S_{\overline{a}}(v):=\{u \in \overline{a}\;|\; \mathbb M \models R(u,v)\}$.
	We say that $v \in \overline{w}$ is \textit{tightly adjacent} to $\overline{a}$ if $S_{\overline{a}}(v)$ contains a copy of $\subcompgraph^c$, where $c$ is the color of $v$.
	If $v \in \overline{w}$ is not tightly adjacent to $\overline{a}$, we say that $v$ is \textit{loosely adjacent} to $\overline{a}$.
	By permuting $\overline{w}$ if necessary, $\overline{w}$ is decomposed into three tuples $\overline{d}$, $\overline{e}$ and $\overline{f}$.
	\begin{itemize}
		\item $\overline{d}$ is the collection of vertices in $\overline{w}$ which are algebraic over $C\overline{b}$.
		\item $\overline{e}$ is the collection of vertices in $\overline{w}$ which are tightly adjacent to $\overline{a}$ and non-algebraic over $C\overline{b}$.
		\item $\overline{f}$ is the collection of vertices in $\overline{w}$ which are loosely adjacent to $\overline{a}$ and non-algebraic over $C\overline{b}$.
	\end{itemize}
	Put $\overline{d}_0=\overline{d}$, $\overline{e}_0=\overline{e}$ and $\overline{f}_0=\overline{f}$ for convenience.
	Since $(\overline{b}_i\;|\;i<\omega)$ is indiscernible and $\overline{b}_0=\overline{b}$, for every formula $\eta(\overline{z},\overline{b})$ in $\mytp(\overline{w}/C\overline{b})$, we have $\mathbb M \models \exists \overline{z}\ \eta(\overline{z},\overline{b}_i)$ for $0<i<\omega$.
	Therefore, $\pi_i(\overline{z}):=\{\eta(\overline{z},\overline{b}_i)\;|\; \eta(\overline{z},\overline{b}) \in \mytp(\overline{w}/C\overline{b})\}$ is a type over $C\overline{b}_i$.
	Let $\overline{w}_i$ is a realization of $\pi_i(\overline{z})$.
	Observe that the first $k$ entries in $\overline{w}_i$ are algebraic over $C\overline{b}_i$ and the other entries are not algebraic over $C\overline{b}_i$.
	Since $\mytp(\overline{b}/C)=\mytp(\overline{b}_i/C)$, there exists an automorphism $$\sigma_i:\mathbb M \to \mathbb M$$ fixing $C$ pointwise and $\sigma_i(\overline{b})=\overline{b}_i$ for $i<\omega$.
	Put $\overline{a}_i=\sigma_i(\overline{a})$, $\overline{d}_i=\sigma_i(\overline{d})$, $\overline{e}_i=\sigma_i(\overline{e})$ and $\overline{f}_i=\sigma_i(\overline{f})$.
	Observe that $\sigma_i(\myacl(C\overline{b}))=\myacl(C\overline{b}_i)$.
	
	Let $$X=\myacl\left(C \cup \bigcup_{i<\omega}\overline{b}_i\right).$$
	Observe that $\overline{d}_i$ is contained in $\myacl(C\overline{b}_i)$ for $i<\omega$.
	By the definition of tight adjacency and Proposition \ref{prop:fr_monster_basic}(3), $\overline{e}$ is contained in $\myacl(C\overline{a})$.
	%Therefore, by replacing $\overline{a}$ with $\overline{a}\overline{e}$, we may assume that $\overline{e}$ is empty.
	
	We construct a graph $\mathfrak Y$ by adding new vertices $\overline{a'}$, $\overline{e'}$ and $\overline{f'}_i$  for $i<\omega$ to $X$ by joining vertices by edges in the following manner:
	\begin{enumerate}
		\item[(a)] The formula $\mydiag_{G_1,G_2}(\overline{a'}{}^\frown\overline{b}_i,\overline{d}_i{}^\frown\overline{e'}{}^\frown\overline{f'}_i)$ holds for $i<\omega$.
		\item[(b)] Every vertex in $\overline{a'}$, $\overline{e'}$ and $\overline{f'}_i$ is adjacent to a vertex $v$ in $\myacl(C\overline{b}_i)$ if and only if the counterpart in $\overline{a}$, $\overline{e}$ and $\overline{f}$ is adjacent to $\sigma_i^{-1}(v)$;
		\item[(c)] They are minimally adjacent, that is, every vertex in $\overline{a'}$, $\overline{e'}$ and $\overline{f'}_i$ is adjacent to a vertex in $X \cup \overline{a'} \cup \overline{e'} \cup \bigcup_{j<\omega}\overline{f'}_j$ only when, if they are separated, conditions (a) or (b) do not hold.
	\end{enumerate}
	Observe that every vertex in $\overline{a'}$, $\overline{e'}$ and $\overline{f'}_i$ is adjacent to a vertex $v$ in $\myacl(C\overline{b}_i)$ if and only if the counterpart in $\overline{a}_i$, $\overline{e}_i$ and $\overline{f}_i$ is adjacent to $v$.
	We may assume that $\overline{e'}=\emptyset$ considering $\overline{a'}\overline{e'}$ instead of $\overline{a'}$ and considering $G_1\overline{e'}$ instead of $G_1$.
	
	We show that $\mathfrak Y$ is $\compgraph$-free.
	Assume for contradiction that $\mathfrak Y$ contains a copy $K$ of $\compgraph$.
	We construct exceptional graphs in $\myacl(AC) \cup \myacl(BC)$ under this assumption.
	Put $A_K:=\overline{a'} \cap K \setminus \myacl(C)$, $C_K:=\myacl(C) \cap K$, $F_K:=K \cap \bigcup_{i<\omega}\overline{f'}_i$ and $X_K:=X \cap K \setminus \myacl(C)$.
	Put $I_A:=\mycol(A_K)$, $I_C:=\mycol(C_K)$, $I_F:=\mycol(F_K)$ and $I_X:=\mycol(X_K)$.
	\medskip
	
	\textbf{Claim 1.} The following assertions hold:
	\begin{enumerate}
		\item[(1)] $A_K \neq \emptyset$.
		\item[(2)] $|I_F| \leq 1$. 
		\item[(3)] $X_K$ is not empty and contains a vertex of color $\notin I_F$.
%		\item[(4)] Suppose $I_F \neq \emptyset$. Let $c_F$ be the color of vertices of $F_K$.
%		There exists $k<\omega$ such that $A_K \cup (X_K \cap \myacl(C\overline{b}_k))$ contains a copy of $\subcompgraph^{c_F}$, and $X_K$ contains a vertex of color $c_F$.
	\end{enumerate}
	\begin{proof}[Proof of Claim 1]
		(1) Suppose not.
		$K$ is contained in the graph $X \cup \bigcup_{i<\omega}\overline{f'}_i$, which is isomorphic to the graph constructed from $F:=X \cup  \bigcup_{j<\omega}\overline{f}_j$ by removing some edges.
		Therefore, $F$ also contains a copy of $\compgraph$.
		However, $F$ is contained in the $\compgraph$-free graph $\mathbb M$, which is absurd.
		
		(2) Suppose $F_K \neq \emptyset$.
		Put $H_i:=F_K \cap \overline{f'}_i$.
		Suppose that $F_K$ contains vertices of multiple colors.
		Since $H_i$ is fully separated from $H_j$ if $i \neq j$, there exists $k<\omega$ such that $F_K \subseteq \overline{f'}_{k}$.
		Since there are no edges joining $\overline{f'}_{k}$ with $X \setminus \myacl(Cb_k)$, $K \cap X = K \cap \myacl(Cb_k)$.
		This implies that $F':=\myacl(Cb_k) \cup \overline{a}_k \cup \overline{f_k}$ contains a copy of $\compgraph$, which is a contradiction because $F'$ is a substructure of $\mathbb M$.
		Therefore, $|I_F|=1$.
		
		(3) Suppose $X_K$ is empty or only contains vertices of color $c_F \in I_F$.
		If $X_K=\emptyset$, $A_K \cup C_K$ contains a copy of $\compgraph$ or contains a copy of $\subcompgraph^{c_F}$, depending on whether $F_K=\emptyset$ or not.
		If $I_X=I_F$, $A_K \cup C_K$ contains a copy of $\subcompgraph^{c_F}$.
		This implies that $F_K$ is contained in $\myacl(C\overline{a'})$ by Proposition \ref{prop:fr_monster_basic}(3).
		Therefore, the vertex in $\overline{f}$ corresponding to a vertex in $F_K$ is contained in $\myacl(C\overline{a})$ of $\mathbb M$, which contradicts the definition of $\overline{f}$.
%		
%		(4) By (2), there exists a vertex $v \in X_K$ of color $\neq c_F$.
%		$v$ is adjacent to vertices in $F_K$.
%		By the definition of $\mathfrak Y$, $v$ is contained in $\myacl(C\overline{b}_k)$ and $F_K$ is contained in $\overline{f'}_k$.
%		Every vertex in $K$ of color $\neq c_F$ is adjacent to $F_K$.
%		Therefore, every vertex in $X_K$ of color $\neq c_F$ is contained in $X_K \cap \myacl(C\overline{b}_k)$.
%		This implies that $A_K \cup (X_K \cap \myacl(C\overline{b}_k))$ contains a copy of $\subcompgraph^{c_F}$.
%		
%		Assume for contradiction that $X_K$ does not contain a vertex of color $c_F$.
%		$K$ is contained in $\overline{a'} \cup \overline{f'}_k \cup \myacl(C\overline{b}_k)$.
%		This implies that the subgraph $\overline{a}_k \cup \overline{f}_k \cup \myacl(C\overline{b}_k)$ of $\mathbb M$ contains a copy of $\compgraph$, which is absurd.
	\end{proof}
	
	We consider three separate cases.
	The first case is the case where $F_K=\emptyset$ and $A_K$ is not monochromatic.
	\medskip 
	
	\textbf{Claim 2.} Suppose $|I_A|>1$. For every vertex $v$ in $X_K$, there exist $i_v<\omega$ such that  $v \in \myacl(C\overline{b}_{i_v})$.
	\begin{proof}[Proof of Claim 2]
		$v$ is adjacent to a vertex in $A_K$ because $|I_A|>1$.
		By the definition of $\mathfrak Y$, $\overline{a'}$ is fully separated from $X \setminus \bigcup_{i<\omega}\myacl(C\overline{b}_i)$.
		Therefore, there exists $i_v<\omega$ with $v \in \myacl(C\overline{b}_{i_v})$.
	\end{proof}

	Assume $F_K= \emptyset$ and $|I_A|>1$, then we have $K=A_K \cup C_K \cup X_K$ in this case.
	We define a map $\mathfrak p: K \to \myacl(AC) \cup \myacl(BC)$.
	For $v \in A_K$, there exists a counterpart of $v$ in $\overline{a}$.
	We define $\mathfrak p(v)$ as this counterpart.
	For $v \in X_K$, there exists $i_v$ such that $c \in \myacl(C\overline{b}_{i_v})$ by Claim 2.
	We put $\mathfrak p(v):=\sigma_{i_v}^{-1}(v) \in \myacl(C\overline{b})$.
	Put $\mathfrak p(v)=v$ for $v \in C_K$ and $K_{\mathfrak p}:=\mathfrak p(K)$.
	
	Let $A_K[c]:=\{v \in A_K\;|\; \mycol(v)=c\}$ for $1 \leq c \leq n$.
	We define $C_K[c]$, $X_K[c]$ and $K_{\mathfrak p}[c]$ in the same manner.
	\medskip
	
	\textbf{Claim 3.} Suppose $|I_A|>1$ and $F_K=\emptyset$. The following assertions hold:
	\begin{enumerate}
		\item[(1)] $\mathfrak p(A_K) \neq \emptyset$ and $\mathfrak p(X_K) \neq \emptyset$.
		The inequality $1 \leq |K_{\mathfrak p}[c]| \leq m_c$ holds for every $1 \leq c \leq n$, and
		$|K_{\mathfrak p}[c]| =m_c$ holds for $c \notin \mycol(X_K)$.
		\item[(2)] Every $v \in \mathfrak p(A_K \cup C_K)$ is fully adjacent to $K_{\mathfrak p}$.
		\item[(3)] Put $k_c:=m_c-|A_K[c]|-|C_K[c]|$ for $1 \leq c \leq n$ and $\overline{k}=(k_1,\ldots,k_n)$.
		Then $\mathfrak p(X_K)$ has a nice $\overline{k}$ cover over $C$.
		\item[(4)] $\myacl(AC) \cup \myacl(BC)$ contains an exceptional graph of type $1$.
	\end{enumerate}
	\begin{proof}[Proof of Claim 3]
		(1) We have $\mathfrak p(A_K) \neq \emptyset$ and $\mathfrak p(X_K) \neq \emptyset$ by Claim 1(1,3).
		The inequalities and equalities in (1) are obvious from the definition of $K_{\mathfrak p}$. 
		
		(2) By the definition of $\mathfrak Y$, it is obvious that $v_1$ is adjacent to $v_2$ if $v_1,v_2 \in \mathfrak p(A_K \cup C_K)$.
		Consider the case where $v_1 \in \mathfrak p(X_K)$ and $v_2 \in \mathfrak p(C_K)$.
		There exist $j<\omega$ and $v \in X_K \cap \myacl(C\overline{b}_j)$ such that $v=\sigma_j(v_1)$.
		$v$ is adjacent to $v_2$.
		$v_1=\sigma_j^{-1}(v)$ is adjacent to $v_2=\sigma_j^{-1}(v_2)$.
		The remaining case is the case where $v_1 \in \mathfrak p(X_K)$ and $v_2 \in \mathfrak p(A_K)$. 
		Pick $j$ and $v$ in the same manner as the previous case.
		By the definitions of $\mathfrak Y$ and $\mathfrak p$, $\sigma_j(v_2)$ is adjacent to $v$.
		Therefore, $v_2$ is adjacent to $v_1=\sigma_j^{-1}(v)$.
	
		(3) Put $X_K\langle i \rangle:=X_K \cap \myacl(C\overline{b}_{i})$ for $i<\omega$.
		Put $\mathfrak q_j(v):=\sigma_j(v) \in \myacl(C\overline{b}_j)$ for $j<\omega $ and $v \in K_{\mathfrak p} \cap \myacl(C\overline{b}) \setminus \myacl(C)=\mathfrak p(X_K)$.
		There exists $j_1,\ldots, j_l<\omega$ such that $X_K\langle j_i \rangle:=X_K \cap \myacl(C\overline{b}_{j_i}) \neq \emptyset$ and $X_K=\sum_{i=1}^l X_K\langle j_i \rangle$.
		Since $X_K\langle j_i \rangle$ is a subgraph of the complete graph $K$, it is a complete graph.
		Put $U_i:=\mathfrak p(X_K\langle j_i \rangle)$ for $1 \leq i \leq l$.
		
		We show that these $U_i$ satisfy conditions (i) through (iii) in Definition \ref{def:fine_cover}.
		The restriction of $\mathfrak p$ to $X_K\langle j_i \rangle$ is the restriction of the isomorphism $\sigma_{j_i}^{-1}$, and $U_i$ is also a complete graph.
		It is obvious that $\mathfrak p(X_K)=\bigcup_{i=1}^l U_i$. 
		They demonstrate that $\{U_i\}_{1 \leq i \leq l}$ satisfies condition (i).
		
		Put $\mathfrak n_{ij}:=|\{v \in U_i\;|\; \mycol(v)=j\}|$, then $\mathfrak n_{ij}=|X_K[j] \cap \myacl(C\overline{b}_{j_i})|$.
		Therefore, $\sum_{i=1}^l \mathfrak n_{ij} = |X_K[j]|=m_j-|A_K[j]|-|C_K[j]|=k_j$ for $1 \leq j \leq n$.
		We have shown that $\{U_i\}$ satisfies condition (ii).
		
		The remaining task is to show that $\{U_i\}$ satisfies condition (iii).
		Let us consider the subgraph $S_i$ of $\mathbb M$ consisting of $\myacl(C) \cup \bigcup_{j \leq i} \mathfrak q_j (\mathfrak p(X_K))$.
		Let $U_{ij}:=\mathfrak q_i(U_j)$.
		$X_K\langle j_i \rangle$ is fully adjacent to $X_K\langle j_{i'} \rangle$ if $i<i'$ because they are subgraphs of the complete graph $K$.
		By indiscerniblity of $(\overline{b}_i\;|\;i<\omega)$ over $C$, $U_{ij}$ is fully adjacent to $U_{i'j'}$ for every $i<i'$ and $1 \leq j<j' \leq l$.
		Therefore, $S_i$ is isomorphic to the graph constructed from $N_i$ in Definition \ref{def:fine_cover} possibly by joining some vertices by extra edges.
		Since $S_i$ is $\compgraph$-free, $N_i$ is $\compgraph$-free as well.
		
		(4) By (1,2,3), $K_{\mathfrak p}$ is an exceptional graph of type $1$ in $\myacl(AC) \cup \myacl(BC)$.
	\end{proof}

	Next, we consider the case where either $|I_A|=1$ and $F_K=\emptyset$ or $|I_A|=1$ and $I_A=I_F$.
	In both cases, $I_F \subseteq I_A$ and $A_K$ is monochromatic.
	We can prove the following claim in the same manner as Claim 2.
	We omit the proof.
	\medskip 
		
	\textbf{Claim 4.} Suppose $|I_A|=1$. Let $c_A$ be the unique color of vertices in $A_K$. 
	For every vertex $v$ in $X_K$ of color $\neq c_A$, there exist $i_v<\omega$ such that  $v \in \myacl(C\overline{b}_{i_v})$.	
	\medskip
	
	Suppose $I_F \subseteq I_A$ and $|I_A|=1$.
	Let $I_A=\{c_A\}$. 
	Let $K':=\{v \in K\;|\; \mycol(v) \neq c_A\} \cup \{v \in A_K \cup C_K\;|\;\mycol(v)=c_A\}$.
	$K'$ is contained in $A_K \cup X_K \cup C_K$. % if $F_X=\emptyset$ and $A_X$ is monochromatic.
	We can define $\mathfrak p:K' \to \myacl(AC) \cup \myacl(BC)$ in the same manner as the case where $F_K=\emptyset$ and $|I_A|>1$ using Claim 4 instead of Claim 2.
	We also define $K'_{\mathfrak p}$ and $K'_{\mathfrak p}[c]$ in the same manner as $K_{\mathfrak p}$ and $K_{\mathfrak p}[c]$.
	\medskip
	
	\textbf{Claim 5.} Suppose $|I_A|=1$ and $I_F \subseteq I_A$. Let $I_A=\{c_A\}$. The following assertions hold:
	\begin{enumerate}
		\item[(1)] $\mathfrak p(A_K) \neq \emptyset$. $\mathfrak p(X_K)$ has a vertex of color $\neq c_A$.
		The inequality $1 \leq |K'_{\mathfrak p}[c]| \leq m_c$ holds for every $1 \leq c \leq n$, and
		$|K'_{\mathfrak p}[c]| =m_c$ holds for $c \notin \mycol(X_K)$ and $c \neq c_A$.
		\item[(2)] Every $v \in \mathfrak p(A_K \cup C_K)$ is fully adjacent to $K'_{\mathfrak p}$.
		\item[(3)] Put $k_{c_A}=0$ and $k_c=m_c-|C_K[c]|$ for $1 \leq c \leq n$ with $c \neq c_A$ and $\overline{k}=(k_1,\ldots,k_n)$.
		Then $\mathfrak p(X_K \setminus X_K[c_A])$ has a nice $\overline{k}$ cover over $C$.
		\item[(4)] We may assume that there are no $v \in \myacl(C)$ of color $c_A$ fully adjacent to $K'_{\mathfrak p}$ other than the vertices in $\mathfrak p(C_K)$.
%		\item[(4)] $K'_{\mathfrak p}$ does not contain a copy of $\subcompgraph^{c_A}$.
%		\item[(5)] Put  $R_{K'}:=\{v \in \myacl(C)\;|\; \mycol(v)=c_A, v \text{ is fully adjacent to } \mathfrak p(X_K) \cup \mathfrak p(C_K)\}$. Then, $|R_{K'}| <m_{c_A}$.
		\item[(5)] $K'_{\mathfrak p}$ is an exceptional graph of type $1$ or $2$.
	\end{enumerate}
	\begin{proof}[Proof of Claim 5]
		(1) We can prove (1) in the same manner as Claim 3(1) except the claim that $\mathfrak p(X_K)$ has a vertex of color $\neq c_A$.
		Assume for contradiction that $\mathfrak p(X_K)$ does not have a vertex of color $\neq c_A$.
		By Claim 1(3), $X_K$ has at least one vertex $p$ of color $c_A$.
		Since $I_A=\{c_A\}$, $C_K$ contains a copy $K''$ of $\subcompgraph^{c_A}$.
		By Proposition \ref{prop:fr_monster_basic}(3), $\myacl(C)$ contains a tuple of $m_{c_A}-1$ vertices $\overline{w}$ of color $c_A$ fully adjacent to $K''$.
		The graph $K''\overline{w}p$ contained in $\mathbb M$ is a copy of $\compgraph$, which is absurd.
		
		(2,3) The proofs of for (2) and (3) are almost the same as those for the counterparts of Claim 3.
		We omit the details.
%		
%		(4) If $K'_{\mathfrak p}$ contains a copy of $\subcompgraph^{c_A}$, $\mathfrak p(A_K)$ is contained in $\myacl(AC) \cap \myacl(BC)=\myacl(C)$, which contradicts the definition of $A_C$.
		
%		(5) Assume for contradiction that there exists a tuple of vertices $\overline{v}$ of color $c_A$ of length $m_{c_A}$ in $\myacl(C)$ fully adjacent to $\mathfrak p(X_K) \cup \mathfrak p(C_K)$.
%		By the definition of $\mathfrak p$, $\overline{v}$ is fully adjacent to $X_K \cup C_K$.
%		However, $X_K \cup C_K$ has a copy of $\subcompgraph^{c_A}$.
%		This implies that the subgraph $\overline{v} \cup X_K \cup C_K$ of $\mathbb M$ contains a copy of $\compgraph$, which is absurd.
%		
		(4) Put $T:=\{v \in \myacl(C)\;|\; \mycol(v)=c_A, \ v \text{ is fully adjacent to }K'_{\mathfrak p}\}$.
		Since $(\overline{b}_i\;|\;i<\omega)$ is indiscernible, by Claim 4, $T$ is fully adjacent to $\{v \in K\;|\; \mycol(v)\neq c_A\}$, which is a copy of $\subcompgraph^{c_A}$.
		If $|T| \geq m_{c_A}$, $\mathbb M$ contains a copy of $\compgraph$, which is absurd.
		Therefore, we have $|T|<m_{c_A}$.
		By replacing $K$ so that $T$ is contained in $C_K$, we may assume that there are no $v \in \myacl(C)$ of color $c_A$ fully adjacent to $K'_{\mathfrak p}$ other than the vertices in $\mathfrak p(C_K)$.

		(5) By (1-4), $K'_{\mathfrak p}$ is an exceptional graph of type $1$ if $|K'_{\mathfrak p}[c_A]|=m_{c_A}$, and it is an exceptional graph of type $2$ otherwise.
	\end{proof}

	Recall that $F_K$ is monochromatic by Claim 1(2).
	The remaining case is the case where either $|I_A|=|I_F|=1$ and $I_A \neq I_F$ or $F_K \neq \emptyset$ and $|I_A|>1$.
	In both cases, we have $I_A \not\subseteq I_F$ and $F_K \neq \emptyset$. 
	\medskip

	\textbf{Claim 6.} Suppose $I_A \not\subseteq I_F$ and $F_K \neq \emptyset$. Let $c_F$ be the unique color contained in $I_F$. Then, the following assertions hold:
	\begin{enumerate}
		\item[(1)] There exists $k_F <\omega$ and $l_X<\omega$ such that $k_F \neq l_X$, $F_K \subseteq \overline{f'}_{k_F}$, $X_K \cap \myacl(\overline{b}_{l_X})$ contains a vertex of color $c_F$.
		In addition, $A_K \cup C_K \cup (X_K \cap \myacl(\overline{b}_{k_F}))$ contains a copy of $\subcompgraph^{c_F}$.
		\item[(2)] $\myacl(AC)\cup \myacl(BC)$ contains an exceptional graph of type $3$. 
	\end{enumerate}
	\begin{proof}[Proof of Claim 4]
		(1)  Choose $k_F<\omega$ so that $\overline{f'}_{k_F} \cap F_K \neq \emptyset$.
		$X_K$ contains a vertex $v$ of color $\neq c_F$ by Claim 1(3).
		Since $\overline{f'}_{k_F}$ is fully separated from $X \setminus \myacl(C\overline{b}_{k_F})$ by the definition of $\mathfrak Y$, $X_K \cap \myacl(C\overline{b}_{k_F})$ contains a vertex $v$ of color $\neq c_F$, and $F_K$ is contained in $\overline{f'}_{k_F}$.
		
		Put $K':=\{v \in K\;|\; \mycol(v) \neq c_F\}$.
		It is contained in $A_K \cup C_K \cup X_K$.
		We have $K' \cap X_K \cap \myacl(C\overline{b}_l)=\emptyset$ for $l \neq k_F$ because there are no edges between $\overline{f'}_{k_F}$ and $\myacl(C\overline{b}_l)$ by the definition of $\mathfrak Y$.
		Therefore, $K' \subseteq A_K \cup C_K \cup \myacl(C\overline{b}_{k_F})$.
		We have finished the proof of the `in addition' part.
		
		If $F_K \cup A_K \cup C_K$ contains $m_{c_F}$ vertices of color $c_F$, the subgraph $\overline{a}_{k_F} \cup C_K \cup \myacl(C\overline{b}_{k_F}) \cup \overline{f}_{k_F}$ of $\mathbb M$ contains a copy of $\compgraph$, which is absurd.
		Therefore, there exists a vertex $u$ of color $c_F$ in $X_K$.
		If such vertices are contained in $\myacl(C\overline{b}_{k_F})$, $\overline{a}_{k_F} \cup C_K \cup \myacl(C\overline{b}_{k_F}) \cup \overline{f}_{k_F}$ contains a copy of $\compgraph$, which is absurd.
		The vertex $u$ is contained in $X \setminus \myacl(C\overline{b}_{k_F})$, and fully adjacent to a vertex in $A_K$ of color $\neq c_F$, which exists because $I_A \not\subseteq I_F$.
		By the definition of $\mathfrak Y$, there exists $l_X$ such that $u \in \myacl(C\overline{b}_{l_X})$.
		
		(2) By (1), $A_K \cup C_K \cup \myacl(C\overline{b}_{k_F})$ contains a copy of $\subcompgraph^{c_F}$.
		Therefore, $K'_{k_F}:=\overline{a}_{k_F} \cup \myacl(C) \cup \myacl(C\overline{b}_{k_F})$ contains a copy of $\subcompgraph^{c_F}$ by the definition of $\mathfrak Y$.
		The graph $\sigma_{k_F}^{-1}(K'_{k_F}) \subseteq \myacl(AC) \cup \myacl(BC)$ also contains a copy $Q$ of $\subcompgraph^{c_F}$.
		
		Let $p$ be a vertex in $X_K \cap \myacl(C\overline{b}_{l_X})$ of color $c_F$.
		$p$ is fully adjacent to $\sigma_{l_X}(Q \cap \myacl(C\overline{a}))$ by the definition of $\mathfrak Y$, $A_K$ and $C_K$.
		$p_0:=\sigma_{l_X}^{-1}(p)$ is fully adjacent to $Q \cap \myacl(C\overline{a})$.
		The graph consisting of $Q$ and $p_0$ is an exceptional graph of type $3$.
	\end{proof}

	We obtained contradictions in Claim 3(4), Claim 5(5) and Claim 6(2).
	We have shown that $\mathfrak Y$ is $\compgraph$-free.
	By Proposition \ref{prop:fr_monster_basic}(1), there exists an embedding $\iota:\mathfrak Y \hookrightarrow \mathbb M$ fixing $X$ pointwise.
	The image $\iota(\overline{a'})$ is a solution of $\{\psi(\overline{x},\overline{b}_i)\;|\;i<\omega\}$.	
	\medskip
	
	\textbf{We show the contraposition of the `only if' part.}
	$A \mynindalg_CB$ implies $A \myninddiv_CB$ by \cite[Reamrk 5.4(4)]{Adler}.
	We assume $A \myindalg_CB$ in the rest of this proof.
	
	We consider the cases where $\myacl(AC) \cup \myacl(BC)$ contains a copy $G$ of the exceptional graph of type $1$, $2$ and $3$.
	By \cite[Lemma 7.1.4]{TZ}, we have $A\myninddiv_{\myacl(C)}B \Rightarrow A\myninddiv_CB$.
	By the definition of exceptional graphs, it is obvious that every exceptional graph with respect to $C$ is an exceptional graph with respect to $\myacl(C)$.
	Therefore, we may assume that $C=\myacl(C)$. 
	We may assume that $B=\myacl(BC)$ because $A \myninddiv_{C}\myacl(BC)$ implies $A \myninddiv_CB$ in any theory by \cite[Remark 5.4(3)]{Adler}.
	
	In order to show $A \myninddiv_CB$, we construct a positive integer $k_G$ and an $\colorgL(C)$-formula $\phi_G(\overline{x},\overline{b})$ with parameters $\overline{b} \in B$ and a sequence $(\overline{b}_i\;|\;i<\omega)$ such that $\overline{b}_0=\overline{b}$, $\mytp(\overline{b}_i)=\mytp(\overline{b})$ for $i<\omega$ and $\{\phi_G(\overline{x},\overline{b})\;|\;i<\omega\}$ is $k_G$-inconsistent.
	
	Let us suppose that $\myacl(AC) \cup B$ contains a copy $G$ of an exceptional graph of type $1$.
	Let $\overline{a}$ and $\overline{b}$ be the enumeration of elements $G_A \cup G_C$ and $G_B$, respectively. 
	Put $k_c=m_c-|\{v \in G_A \cup G_C\;|\;\mycol(v)=c\}|$ for $1 \leq c \leq n$ and $\overline{k}=(k_1,\ldots,k_n)$.
	Since $G_B$ has a nice $\overline{k}$-cover over $C$, there exist subgraphs $U_1,\ldots, U_l$ satisfying conditions (i) through (iii) in Definition \ref{def:fine_cover}.
	
	Let $M$ be a small model of $\randfrcolgT$ containing $B$.
	Let $M_i$ be copies of $M$ with $M_0=M$ for $i<\omega$.
	Let $\overline{b}_i$ be the enumeration of a copy of $G_B$ in $M_i$.
	The copy of $U_j$ in $M_i$ is denoted by $U_{ij}$. 
	We construct graphs $N_i$ for $i<\omega$ so that $M_{i'}$ is a subgraph of $N_i$ for $i' \leq i$  as follows:
	Put $N_0:= M_0$.
	For $i>0$, $N_i$ be the graph constructed from $N_{i-1} \otimes_{\myacl(C)} M_i$ by joining vertices in $M_i$ with vertices in $N_{i-1}$ by edges so that, for every $i'<i$, $U_{ij}$ is fully adjacent to $U_{i'j'}$ if $1 \leq j'<j \leq l$, and $U_{ij}$ is fully separated to $U_{i'j'}$ if $1 \leq j<j' \leq l$. 
	Then, by condition (iii) of Definition \ref{def:fine_cover}, $N_i$ is $\compgraph$-free for all $i<\omega$.
	The graph $N:=\bigcup_{i<\omega}N_i$ is also $\compgraph$-free.
	By Proposition \ref{prop:fr_monster_basic}(1), we can embed $N$ into $\mathbb M$ fixing $M$ pointwise.
	In addition, $M_i$ is a model of $\randfrcolgT$ and contained in $\mathbb M$.
	Since $M_i$ is an elementary substructure of $\mathbb M$ by Lemma \ref{lem:fr_model_complete}, we can extend the canonical isomorphism $f_i;M_i \to M_0$ to an automorphism in $\mathbb M$.
	Therefore, we have $\mytp(\overline{b}_i/C)=\mytp(\overline{b}_0/C)=\mytp(\overline{b}/C)$.
	
	Put $k_G:=l$. Let $H:=G_A \cup G_C$.
	Let $\overline{a}_1$ be the enumeration of $G_A$ and $\overline{a}_2$ be the enumeration of $G_C$.
	We may assume that $\overline{a}=\overline{a}_1{}^\frown\overline{a}_2$.
%	We can choose an $\colorgL(C)$-formula $\chi(\overline{t},\overline{x})$ with $|\overline{t}|=|\overline{a}|$ such that it is a conjunction of atomic formulas and $\mathbb M \models \chi(\overline{t},\overline{b}) \Leftrightarrow \overline{t}=\overline{a}$.
	%Let $$\phi_G(\overline{x},\overline{y}):= \exists \overline{t}\ \chi(\overline{t},\overline{x}) \wedge  \mydiag_{H,G}(\overline{t},\overline{y}).$$ 
	Let $\phi_G(\overline{x},\overline{y}):= \mydiag_{H,G}(\overline{x}{}^\frown\overline{a}_2,\overline{y}).$ 
	We have $\mathbb M \models \phi_G(\overline{a}_1,\overline{b}).$
	We show that $(\phi_G(\overline{x};\overline{b}_i)\;|\;i<\omega)$ is $k_G$-inconsistent.
	Let $i_1<i_2<\ldots<i_{k_G}<\omega$.
	Assume for contradiction that $\{\phi_G(\overline{x};\overline{b}_{i_j})\;|\;1 \leq j \leq k_G\}$ has a solution $\overline{a'}$.
	Observe that $U_{i_j,k} \subseteq \overline{b}_{i_j}$ for $1 \leq k \leq l$.
	The graph $\overline{a'} \cup \overline{a}_2 \cup \bigcup_{j=1}^{k_G} U_{i_j,j}$ is a copy of $\compgraph$, which is absurd.
	We have shown $A \myninddiv_CB$ in this case.
	\medskip
	
	Next we treat the case where there exists an exceptional graph $G$ of type $2$ in $\myacl(AC) \cup B$.
	We may assume that the vertexes in $G_A$ are of color $1$ without loss of generality.
	Let $\overline{b}$ be the enumeration of elements in $G_B$.
	We define $M_i$, $N$, $\overline{b}_i$, $U_{ij}$ in the same manner as the case of exceptional graph of type $1$.
	$N$ is $K_m$-free.
	Put $c_0:=|\{v \in G_C\;|\;\mycol(v)=1\}|$ and $k_0=m_1-1-c_0>0$.
	Observe that $|G_A| \leq k_0$.
	For every $\overline{i}:=(i_1,i_2,\ldots,i_l)$ with $i_1<\ldots<i_l<\omega$, add new tuple $\overline{p}\langle{\overline{i}}\rangle$ of vertices of color $1$ of length $k_0$.
	Join $\overline{p}\langle{\overline{i}}\rangle$ with vertexes by edges so that $\overline{p}\langle {\overline{i}}\rangle$ is fully adjacent to $U_{i_jj}$ for $1 \leq j \leq l$.
	Let $N'$ be the graph constructed by adding new vertices as above.
	Since $|G[1]|<m_1$, and $G$ satisfies condition (d) in the definition of exceptional graphs of type $2$, $N'$ is $\compgraph$-free. 
	By Proposition \ref{prop:fr_monster_basic}(1), we can embed $N'$ into $\mathbb M$ fixing $M$ pointwise.
	In the same manner as the case of exceptional graphs of type $1$, we have $\mytp(\overline{b}_i/C)=\mytp(\overline{b}_0/C)=\mytp(\overline{b}/C)$.
	
	Put $k_G=l+1$. Let $\overline{x}$ be a tuple of variables of length $|G_A|$. Let $\phi_G(\overline{x},\overline{y})$ be the formula with parameters from $C$ saying that $\overline{x}$ are of color $1$ and $\overline{x}\overline{y}G_C$ is isomorphic to $G$.
	It is obvious that $\mathbb M \models \phi_G(\overline{a},\overline{b})$.
	We show that $\{\phi_G(\overline{x},\overline{b}_i)\;|\;i<\omega\}$ is $k_G$-inconsistent.
	Let $i_1<i_2<\ldots<i_{k_G}<\omega$ be an arbitrary increasing sequence.
	Put $\overline{i}_1=(i_1,\ldots,i_l)$ and $\overline{i}_2=(i_2,\ldots,i_{l+1})$.
	Since $\bigcup_{j=k}^{l+k-1}U_{i_jj}$ contains a copy of $\subcompgraph^1$, every solution of $\{\phi_G(\overline{x},\overline{b}_i)\;|\; i \in \overline{i}_k\}$ for $k=1,2$ is a subtuple of the vertices of color $1$ of length $m_1-1$ fully adjacent to $\bigcup_{j=k}^{l+k-1}U_{i_jj}$.
	By the definition of $N'$, the solutions of $\{\phi_G(\overline{x},\overline{b}_i)\;|\; i \in \overline{i}_k\}$ are subtuples of $\overline{p}\langle\overline{i}_k\rangle$ for $k=1,2$.
	Therefore, $\{\phi_G(\overline{x},\overline{b}_{i_j})\;|\;1 \leq j \leq k_G\}$ has no common solution.
	\medskip
	
	The remaining case is the case where there exists an exceptional graph $G$ of type $3$ in $\myacl(AC) \cup B$.
	Let $c_0$ be the color given in the definition of exceptional graphs of type $3$.
	We may assume that $c_0=1$ without loss of generality. 
	Let $M_0$ be a small elementary substructure of $\mathbb M$ containing $G$.
	Let $M_i$ be copies of $M_0$ for $i<\omega$.
	Let $G_{B,i}$ and $p_{0,i}$ be the copies of $G_B$ and $p_0$ in $M_i$ for $i<\omega$, respectively.
	Put $\overline{b}_i:=G_{B,i} \cup \{p_{0,i}\}$.
	
	We construct a $\compgraph$-free graph $N_i$ for $i<\omega$ having $M_j$ $(j \leq i)$ as a subgraph by induction on $i$ as follows: 
	Put $N_0:=M_0$.
	Let $N'_i := N_{i-1} \otimes_{C}M_i$.
	$N'_i$ is $\compgraph$-free by Lemma \ref{lem:amalgam_free_colored}.
	We join vertices in $G_{B,i}$ with $\overline{p}_{0,j}$ by edges if $i \neq j$.
	Let $N_i$ be the graph constructed from $N'_i$ by adding the edges as above.
	We show that $N_i$ is $\compgraph$-free.
	Assume for contradiction that $N_i$ contains a copy $K$ of $\compgraph$.
	Any vertex of color $\neq 1$ in $M_j \setminus C$ is separated from all the vertex of color $\neq 1$ in $M_{j'} \setminus C$ for $j,j' \leq i$ with $j \neq j'$.
	Therefore, the subgraph $K'$ of $K$ which is isomorphic to $\subcompgraph^1$ is contained in $M_j$ for some $j \leq i$.
	We assume that $j<i$, but we can lead to a contradiction in the same manner if $j=i$.
	Since $N_{i-1}$ is $\compgraph$-free by the induction hypothesis, $K$ contains a vertex in $M_i \setminus C$.
	$p_{0,i}$ is the unique vertex adjacent to some vertex in $M_j \setminus C$.
	Therefore, $p_{0,i}$ is contained in $K$.
	However, $p_{0,i}$ is only adjacent to the vertices in $G_{B,j}$, which is not a copy of $\subcompgraph^1$.
	This is a contradiction.
	Put $N=\bigcup_{i<\omega}N_i$, which is also $\compgraph$-free.
	
	By Proposition \ref{prop:fr_monster_basic}(1), we can embed $N$ into $\mathbb M$ fixing $M_0$ pointwise.
	Therefore, we may assume that $\overline{b}_i$ is contained in $\mathbb M$.
	We have $\mytp(\overline{b}_i)=\mytp(\overline{b})$ for $i<\omega$ for the same reason as the case of type $1$.
	Let $\overline{a}_1$ be the enumeration of $G_A$ and $\overline{a}_2$ be the enumeration of $G_C$.
%	We can choose an $\colorgL(C)$-formula $\chi(\overline{t},\overline{x})$ with $|\overline{t}|=|\overline{a}|$ such that it is a conjunction of atomic formulas and $\mathbb M \models \chi(\overline{t},\overline{b}) \Leftrightarrow \overline{t}=\overline{a}$.
	Put $k_G=m_1$ and let $\phi_G(\overline{x},\overline{y}):=  \mydiag_{G_A,G}(\overline{x},\overline{a}_2{}^\frown\overline{y}).$
	We show that $\{\phi_G(\overline{x},\overline{b}_i)\;|\;i<\omega\}$ is $k_G$-inconsistent.
	Let $i_1<i_2<\ldots<i_{k_G}<\omega$.
	Assume for contradiction that there exists a common realization $\overline{a}$ of $\{\phi_G(\overline{x},\overline{b}_{i_j})\;|\;1 \leq j \leq k_G\}$.
	$\overline{a} \cup G_{B,i_1}$ is a copy of $\subcompgraph^1$ and it is fully adjacent to $m_1$ distinct vertices $p_{0,i_1},\ldots, p_{0,i_{k_G}}$ of color $1$.
	This means that $\mathbb M$ contains a copy of $\compgraph$, which is absurd.
\end{proof}

\begin{corollary}\label{cor:divide_fork}
	If $n>2$, dividing independence $\myinddiv$ coincides with forking independence $\myindfork$ in $\randfrcolgT$.
\end{corollary}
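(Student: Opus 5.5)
Since forking independence always implies dividing independence, it suffices to show that $\myinddiv$ implies $\myindfork$ in $\randfrcolgT$. We use the second characterization of $\myindfork$ in the definition: we must show that if $A \myinddiv_C B$ and $\widehat B \supseteq B$ is small, then there exists $A' \equiv_{BC} A$ with $A' \myinddiv_C \widehat B$. Equivalently, by a standard result (Adler), it suffices to show that $\myinddiv$ satisfies \emph{extension}: if $A \myinddiv_C B$ and $\widehat B \supseteq B$, then some $A' \equiv_{BC} A$ satisfies $A' \myinddiv_C \widehat B$. By Theorem \ref{thm:dividing}, $\myinddiv$ is characterized combinatorially: $\myindalg$ together with the absence of exceptional graphs of types $1$, $2$, $3$ in $\myacl(AC)\cup\myacl(BC)$. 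So we must produce $A'$ for which no exceptional graph appears in $\myacl(A'C)\cup\myacl(\widehat BC)$.

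For the construction, we may assume $C=\myacl(C)$, $B=\myacl(BC)$, $\widehat B=\myacl(\widehat BC)$, and we put $D:=\myacl(AC)\cup B$, a substructure of $\mathbb M$. We form the free amalgam $\myacl(AB)\otimes_{B}\widehat B$, where $\myacl(AB)$ is taken as a model of $\frcolgTc$ by Proposition \ref{prop:fr_monster_basic}(3). By Lemma \ref{lem:amalgam_free_colored} this amalgam is $\compgraph$-free. By Proposition \ref{prop:fr_monster_basic}(1),(2) it embeds into $\mathbb M$ over $\widehat B$, giving $A'$ with $\mytp(A'/B)=\mytp(A/B)$ and $\myacl(A'B)\setminus B$ fully separated from $\widehat B\setminus B$. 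We then check two facts. First, $\myacl(A'C)\cap\widehat B\subseteq C$: the copy of $\myacl(AC)$ meets $\widehat B$ only inside $\myacl(AC)\cap B=C$, using $\myindalg$. Second, $\myacl(A'C)$ is the image of $\myacl(AC)$, by Proposition \ref{prop:fr_monster_basic}(3) applied to the embedding.

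The main step is ruling out exceptional graphs $G\subseteq\myacl(A'C)\cup\widehat B$. We use condition (b) of types $1$ and $2$, that every pair of vertices outside $G_B$ is adjacent, together with condition (c) of type $3$, that $p_0$ is fully adjacent to $G_A\cup G_C$. Each vertex of $G_{\widehat B}$ must therefore be adjacent to some vertex of $G_A\neq\emptyset$. By full separation, any vertex of $G_{\widehat B}$ adjacent to a vertex of $G_A$ lies in $B$. So $G_{\widehat B}\subseteq B$ whenever every vertex of $G_{\widehat B}$ is adjacent to $G_A$, and then $G$ transfers back to an exceptional graph in $\myacl(AC)\cup B$, contradicting $A\myinddiv_C B$. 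The delicate point is type $1$ with vertices of $G_B$ whose colors force adjacency only through $G_A$, and type $3$, where $Q$ may contain vertices of $\widehat B$ adjacent only to $G_C$ and other $\widehat B$ vertices. Handling these is the expected obstacle. For type $3$, we would first replace $Q\cap(\widehat B\setminus B)$ using the fact that $Q\setminus\{p_0\}$ must meet $G_A$ and is complete, so every vertex of $Q$ of color different from a color in $G_A$ is adjacent to $G_A$. The remaining same-color vertices are adjacent to some vertex of $G_A\cup G_C$ of another color, and since $G_A$ is complete with $Q$ multicolored when $n>2$, they are adjacent to $G_A$ as well. A similar color-counting argument, using $n>2$ and condition (a) of each type, should handle types $1$ and $2$. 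We also need the nice-cover condition (iii) and condition (d) of type $2$ to be unchanged when viewed inside $B$. This holds because these conditions refer only to $G_B$ and $\myacl(C)$.
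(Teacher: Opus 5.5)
Your construction matches the paper's: a copy of the $A$-side is freely amalgamated with $\myacl(\widehat BC)$ over $\myacl(BC)$, and then Theorem~\ref{thm:dividing} is combined with full separation. Your preliminary checks ($A'\equiv_{BC}A$ and $\myacl(A'C)\cap\myacl(\widehat BC)=\myacl(C)$) are fine.

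\textbf{Where the gap is.} Full separation only puts into $B$ those vertices of $G_{\widehat B}$ that are adjacent to some vertex of $G_{A'}$. The defining conditions of all three types only make a vertex of $G_{\widehat B}$ adjacent to the vertices of $G_{A'}$ of a \emph{different} color. This settles type 2, where $\mycol(G_A)\cap\mycol(G_B)=\emptyset$. It also settles types 1 and 3 when $|\mycol(G_{A'})|\geq 2$. It does not settle types 1 and 3 when $G_{A'}$ is monochromatic of some color $c$ and $G_{\widehat B}\setminus B$ contains vertices of color $c$.

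\textbf{Why your patch fails.} Your patch for type 3 is false: two vertices of the same color are never adjacent in an $n$-partite graph. So such a vertex has no neighbour in $G_{A'}$, however many colors $Q$ has. Its adjacency to $G_C\subseteq\myacl(C)\subseteq B$ carries no information under full separation. For type 1 you only assert that a similar argument should work; it cannot, for the same reason.

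Such graphs do occur. Take $n=3$, $\overline m=(2,1,1)$, $C=\emptyset$. Let $a\in A'$ have color $1$, let $w,u\in B$ have colors $2,3$, and let $v\in\widehat B\setminus B$ have color $1$, with edges $aw$, $au$, $vw$. Then $\{a,v,w,u\}$ is exceptional of type 1, with nice cover $U_1=\{v,w\}$, $U_2=\{u\}$. It is not contained in $\myacl(A'C)\cup B$.

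\textbf{Comparison with the paper, and what is missing.} The paper's proof follows exactly your route. It disposes of this step with the one-line claim that $G_{A'}$ is \emph{fully adjacent} to $G_{\widehat B}$, hence $G_{\widehat B}\subseteq\myacl(BC)$. In the paper, \emph{fully adjacent} also only concerns vertices of different colors, so that inference has the same hole. You located the real crux, but neither argument closes it.

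What is needed is a reduction rather than color counting. Suppose $G_{A'}$ is monochromatic of color $c$. Delete the color-$c$ vertices of $G_{\widehat B}$; this removes everything outside $B$. Then show that what remains, completed if necessary by color-$c$ vertices of $\myacl(C)$ fully adjacent to it, is still exceptional (of type 1, 2 or 3) inside $\myacl(A'C)\cup B$. That would contradict $A\myinddiv_C B$. In the example, $\{a,w,u\}$ is exceptional of type 2 in $\myacl(A'C)\cup B$. In general this requires re-verifying, for the reduced graph, the nice-cover condition (iii) and condition (d) of type 2. Your proposal contains no such argument.
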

\begin{proof}
	Let $A,B$ and $C$ be a small subset of $\mathbb M$.
	The implication ${A}\myindfork_{C}{B} \Rightarrow {A}\myinddiv_{C}{B}$ is well-known.
	We show the opposite implication.
	Suppose ${A}\myinddiv_{C}{B}$.
	Let $\widehat{B}$ be a small subset of $\mathbb M$ containing $B$.
	We have only to find a small subset $A' \subseteq \mathbb M$ such that $A' \equiv_{BC}A$ and $A' \myinddiv_C\widehat{B}$.
	
	Let $M$ be a submodel of $\randfrcolgT$ contained in $\mathbb M$ and containing $A\widehat{B}C$.
	Let $M'$ be a copy of $M$.
	Put $N:=M \otimes_{\myacl(BC)} M'$, which is $\compgraph$-free by Lemma \ref{lem:amalgam_free_colored} and Proposition \ref{prop:fr_monster_basic}(3).
	By Proposition \ref{prop:fr_monster_basic}(1), we can embed $N$ into $\mathbb M$ fixing $M$ pointwise.
	Therefore, we may assume that $N$ is a substructure of $\mathbb M$.
	Let $A'$ be the copy of $A$ in $M'$.
	By the definition of free amalgam, the relation $A' \equiv_{BC}A$ obviously holds.
	
	We show $A' \myinddiv_C\widehat{B}$.
	We use the equivalent condition given in Theorem \ref{thm:dividing}.
	By the definition of free amalgam, we have $\myacl(A'C) \cap \myacl(\widehat{B}C) \subseteq \myacl(BC)$.
	Since ${A}\myinddiv_{C}{B} \Rightarrow {A}\myindalg_{C}{B}$ holds in any theory, we have $\myacl(A'C) \cap \myacl(BC)=\myacl(C)$.
	Therefore, we get $\myacl(A'C) \cap \myacl(\widehat{B}C)=\myacl(A'C) \cap  \myacl(BC)=\myacl(C)$.
	
	Assume for contradiction that $\myacl(A'C) \cup \myacl(\widehat{B}C)$ contains an exceptional graph $G$ of type $1$, $2$ or $3$ defined in Theorem \ref{thm:dividing}.
	We put $G_{A'}:=G \cap \myacl(A'C) \setminus \myacl(C)$, $G_{\widehat{B}}:=G \cap \myacl(\widehat{B}C) \setminus \myacl(C)$ and $G_C=G \cap \myacl(C)$.
	Since $\myacl(A'C) \cap \myacl(\widehat{B}C)=\myacl(C)$ and $G_{A'}$ is contained in $M'$, $G_{A'}$ is contained in $M' \setminus \myacl(BC)$.
	Whatever the type of $G$ is, $G_{A'}$ is fully adjacent to $G_{\widehat{B}}$.
	On the other hand, $M \setminus \myacl(BC)$ is fully separated from $M' \setminus \myacl(BC)$.
	Therefore, $G_{\widehat{B}}$ is contained in $\myacl(BC)$.
	This implies that $G$ is contained in $\myacl(A'C) \cup \myacl(BC)$, which contradicts $A \myinddiv_CB$ by Theorem \ref{thm:dividing}.
	We have shown that $\myacl(A'C) \cup \myacl(\widehat{B}C)$ does not contain exceptional graphs.
	We get ${A'} \myinddiv_C\widehat{B}$ by Theorem \ref{thm:dividing}.
\end{proof}

%\begin{definition}[{\cite{EO}}]
%	A formula $\phi(\overline{x},\overline{a})$ \textit{strongly divides} over $A$ if $\mytp(\overline{a}/A)$ is nonalgebraic and $\{\phi(\overline{x)},\overline{a'}\;|\;\overline{a}'\models \mytp(\overline{a}/A)\}$ is $k$-inconsistent for some $k<\omega$.
%	We say that $\phi(\overline{x},\overline{a})$ \textit{\textarc{\th}-divides} over $A$ if there exists $\overline{c}$ such that $\phi(\overline{x},\overline{a})$  strongly divides over $A\overline{c}$.
%	We define \textarc{\th}-forking of formulas, types and the notion of thorn independence $\myindthorn$ by replacing the words `divide' and `fork' in the definition of forking of formulas, types and the notion of forking independence $\myindfork$ with the words `\textarc{\th}-divide' and `\textarc{\th}-fork' in the same manner as \cite[Section 7.1]{TZ}.
%\end{definition}

\begin{definition}
A theory $T$ admits \textit{elimination of imaginaries} if, for every $\overline{e} \in \mathbb M^{\text{eq}}$, there exists a real tuple $\overline{a}$ such that $\overline{e} \in \mydcleq(\overline{a})$ and $\overline{a} \in \mydcleq(\overline{e})$.
$T$ admits \textit{weak elimination of imaginaries} if, for every $\overline{e} \in \mathbb M^{\text{eq}}$, there exists a real tuple $\overline{a}$ such that $\overline{e} \in \mydcleq(\overline{a})$ and $\overline{a} \in \myacleq(\overline{e})$.
\end{definition}

If $n=2$, $\randfrcolgT$ does not admit elimination of imaginaries, but admits weak elimination imaginaries \cite[Remark 4.26, Theorem 4.29]{CK}.
If $n>2$, even the weaker version does not hold.
\begin{proposition}\label{prop:weak_elim_im}
	If $n>2$, $\randfrcolgT$ does not admit weak elimination of imaginaries.
\end{proposition}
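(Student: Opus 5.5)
The plan is to exhibit a single imaginary $e$ with two properties: $e$ has infinitely many conjugates over $\myacl(\emptyset)$, and every real element algebraic over $e$ already lies in $\myacl(\emptyset)$. Together these rule out weak elimination. Indeed, if $\overline{a}$ were a real tuple with $e \in \mydcleq(\overline{a})$ and $\overline{a} \in \myacleq(e)$, then $\overline{a} \subseteq \myacleq(e) \cap \mathbb M = \myacl(\emptyset)$. Hence $e \in \mydcleq(\myacl(\emptyset))$, so every automorphism fixing $\myacl(\emptyset)$ fixes $e$, contradicting the first property.

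The imaginary should exploit the third colour, which is exactly what distinguishes $n>2$ from the case $n=2$ in \cite{CK}. As in Propositions \ref{prop:fr_TP2} and \ref{prop:fr_SOP3}, let $\overline{c}$ range over copies of $\compgraph\langle\{3,\ldots,n\}\rangle$ and $\overline{b}$ over $m_2$-tuples of colour $2$. Define
\[
\overline{b}\,E\,\overline{b'} \iff \text{for every copy } \overline{z} \text{ of } \compgraph\langle\{3,\ldots,n\}\rangle,\ (\overline{b} \text{ fully adjacent to } \overline{z}) \leftrightarrow (\overline{b'} \text{ fully adjacent to } \overline{z}),
\]
which is a $\emptyset$-definable equivalence relation. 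Put $e:=\overline{b}/E$ for some $\overline{b}$ disjoint from $\myacl(\emptyset)$.

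The key steps, in order, are as follows.
\begin{enumerate}
\item Show that $e$ has infinitely many conjugates. Free amalgamation over $\myacl(\emptyset)$ (Lemma \ref{lem:amalgam_free_colored} and Proposition \ref{prop:fr_monster_basic}(1)) produces realizations of $\mytp(\overline{b})$ that are fully adjacent to some copy $\overline{c}$ from which $\overline{b}$ is separated, so they are $E$-inequivalent to $\overline{b}$.
\item Show that for every representative $\overline{b}$ of $e$ there is another representative $\overline{b'}$ with $\mytp(\overline{b'})=\mytp(\overline{b})$ and $\myacl(\overline{b}) \cap \myacl(\overline{b'}) = \myacl(\emptyset)$.
\item Conclude. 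Any real $d \in \myacl^{\mathrm{eq}}(e)$ lies in $\myacl(\overline{b}) \cap \myacl(\overline{b'})$ for all such pairs, hence in $\myacl(\emptyset)$. Proposition \ref{prop:fr_monster_basic}(3) is used here to compute these algebraic closures as the smallest models of $\frcolgTc$ containing the sets.
\end{enumerate}

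For step 2, the intended construction is as follows. Take a small model $M \supseteq \overline{b}$, form a copy $M'$ of $M$ and the free amalgam $M \otimes_{\myacl(\emptyset)} M'$. Then add edges so that the copy $\overline{b'}$ of $\overline{b}$ becomes fully adjacent to exactly those copies of $\compgraph\langle\{3,\ldots,n\}\rangle$ in $M$ to which $\overline{b}$ is fully adjacent, and symmetrically for $\overline{b}$ and $M'$. After checking $\compgraph$-freeness, embed the result by Proposition \ref{prop:fr_monster_basic}(1) and use Lemma \ref{lem:fr_model_complete} to transfer types.

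The $\compgraph$-freeness check follows the pattern of the proofs of Theorem \ref{thm:dividing} and Proposition \ref{prop:fr_SOP3}. A new copy of $\compgraph$ would have to mix vertices of $\overline{b}$ and $\overline{b'}$, all of colour $2$. Its colour-$\ge 3$ part would then be fully adjacent to both, and its colour-$1$ part would be split between $M$ and $M'$, which is impossible since those parts are fully separated.

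I expect the main obstacle to be step 2's global requirement. The relation $E$ quantifies over all $\overline{z}$ in $\mathbb M$, so it is not enough to arrange agreement inside $M \cup M'$. The embedding must also not create new copies $\overline{z}$ in $\mathbb M$ distinguishing $\overline{b}$ from $\overline{b'}$. My first attempt is to replace $E$ by its restriction to $\overline{z}$ inside the set of colour-$\ge 3$ vertices fully adjacent to a fixed colour-$1$ configuration, chosen so that this set is controlled by $\myacl$. Failing that, I would define $E$ through the finitely many colour-$1$ vertices forced by condition (*) of $\frcolgTc$, so that the relevant data lives in algebraic closures where Proposition \ref{prop:fr_monster_basic}(3) gives full control.
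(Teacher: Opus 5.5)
Your opening reduction is sound, but step~2 is false, not merely hard. The relation $E$ collapses to ``$\overline b$ and $\overline{b'}$ enumerate the same set''. To see this, let $v\in\overline{b'}\setminus\overline b$ and put $A:=\overline b\,\overline{b'}$. Let $B$ be $A$ together with a new copy $\overline z$ of $\compgraph\langle\{3,\ldots,n\}\rangle$ that is fully adjacent to $\overline b$ and has a vertex not adjacent to $v$. Since $A$ consists of colour-$2$ vertices and $m_1\ge 1$, no vertex of $\overline z$ has a copy of $\subcompgraph^{c}$ among its neighbours in $A$. Also $B$ has no colour-$1$ vertex, so it is $\compgraph$-free. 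Thus $(A,B)$ is a safe pair, and the corresponding axiom of $\randfrcolgT$ produces such a $\overline z$ in $\mathbb M$.

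Hence the $E$-class of $\overline b$ is just the set of its $m_2!$ permutations. So $e\in\mydcleq(\overline b)$ and $\overline b\subseteq\myacleq(e)$: your $e$ is weakly eliminated by $\overline b$ itself. Your second property fails, since you chose $\overline b$ disjoint from $\myacl(\emptyset)$. The ``global'' obstacle you flag at the end is exactly this genericity, and it is fatal rather than technical. Your two fallbacks are not specified enough to assess.

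The paper argues differently. It takes $e$ to be the colour-$1$ class of the relation ``same colour''. For $\overline a\in\myacleq(e)$, it iterates free amalgams of copies of a model $M\ni\overline a$ over the colour-$1$ part $D$ of $M$ (a model of $\frcolgTc$ since $n>2$), producing infinitely many conjugates of $\overline a$ over $e$. However, this $e$ is $C_1(\mathbb M)$, so $e\in\mydcleq(\emptyset)$ and the empty tuple weakly eliminates it. The construction yields new conjugates only when $\overline a\not\subseteq D$, and never for $\overline a=\emptyset$. Your requirement that $e$ have infinitely many conjugates is precisely what excludes such an $e$.

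Worse, no choice of $e$ can work in general. Suppose $m_1=\cdots=m_n=1$. Then $\frcolgTc=\frcolgT$, free amalgams over arbitrary subgraphs are $\compgraph$-free, and Lemma~\ref{lem:rand_qe} gives quantifier elimination. So the usual free-amalgamation argument (cf.\ \cite{Conant}) applies:
\begin{enumerate}
\item Write $e=f(S_1)\in\mydcleq(S_2)$ with $f$ $\emptyset$-definable, and put $S_{12}:=S_1\cap S_2$.
\item Choose $S_1^*\equiv_{S_2}S_1$ freely amalgamated with $S_1$ over $S_2$, hence over $S_{12}$. Then $f(S_1^*)=e$.
\item Every $S'\equiv_{S_{12}}S_1$ free from $S_1$ over $S_{12}$ satisfies $S'\equiv_{S_1}S_1^*$ by quantifier elimination, so $f(S')=e$.
\item Taking $S'$ free over $S_{12}$ from both $S_1$ and $\sigma(S_1)$ shows $\sigma(e)=e$ for every $\sigma\in\operatorname{Aut}(\mathbb M/S_{12})$.
\end{enumerate}
Thus the least finite $S$ with $e\in\mydcleq(S)$ is fixed setwise by $\operatorname{Aut}(\mathbb M/e)$. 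Hence $S\subseteq\myacleq(e)$, and weak elimination of imaginaries holds. So the proposition is false for $\overline m=(1,\ldots,1)$. Any correct version must at least exclude that case before a strategy like yours can succeed.
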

\begin{proof}
	Consider the $\emptyset$-definable equivalence relation $C(x,y)$ defined by $$\mathbb M \models C(x,y) \Leftrightarrow \mycol(x)=\mycol(y).$$
	Let $e$ be the equivalence class of vertices of color $1$.
	Let $\overline{a} \in  \mathbb M$ with ${e} \in \mydcleq(\overline{a})$.
	We show that $\overline{a} \notin \myacleq({e})$.
	Assume for contradiction that such a tuple $\overline{a}$ exists.
	Let $M$ be an arbitrary small model of $\randfrcolgT$ containing $\overline{a}$.
	Put $D:=\{v \in M\;|\; \mycol(v)=1\}$.
	Let $(M_i\;|\;i<\omega)$ be a family of copies of $M$ with $M_0=M$.
	Let $N_0:=M_0$, $N_i:=M_i \otimes_D N_{i-1}$ for $i>0$ and $N:=\bigcup_{i<\omega} N_i$.
	Observe that $D$ is a model of $\frcolgTc$ because $n>2$.
	By Lemma \ref{lem:amalgam_free_colored}, $N_i$ is $\compgraph$-free for $i<\omega$.
	Therefore, $N$ is $\compgraph$-free.
	
	We can embed $N$ into $\mathbb M$ fixing $M$ pointwise by Proposition \ref{prop:fr_monster_basic}(1).
	Assume for contradiction that $\overline{a} \in \myacl^{\text{eq}}({e})$.
	There exists an $(\colorgL)^{\text{eq}}$-formula $\phi(\overline{x},{e})$ such that $\mathbb M \models \phi(\overline{a},{e})$ and $\phi(\overline{x},{e})$ has finitely many solutions.
	Let $\sigma_i:M \to M_i$ be the canonical isomorphism, and put $\overline{a}_i:=\sigma_i(\overline{a})$ for $i<\omega$.
	Observe that $e$ is fixed for every automorphism $\mathbb M \to \mathbb M$.
	%Since $\randfrcolgT$ is model complete by Lemma \ref{lem:fr_model_complete}, $\mathbb M \models  \phi(\overline{a}_i,{e})$ for every $i<\omega$, which contradicts that $\phi(\overline{x},e)$ has finitely many solutions.
	We have $\mathbb M \models  \phi(\overline{a}_i,{e})$ for every $i<\omega$, which contradicts that $\phi(\overline{x},e)$ has finitely many solutions.
\end{proof}


\begin{thebibliography}{99}
\bibitem{Adler}
H.~Adler,
\emph{A geometric introduction to forking and thorn-forking},
J. Math. Logic, \textbf{9}(1) (2009), 1--20.
	
%\bibitem{BS}
%J.~T.~Baldwin and N.~Shi,
%\emph{Stable generic structures},
%Ann.~Pure Appl.~Logic, \textbf{79} (1996), 1--35.

\bibitem{ChanK}
C.~C.~Chang and H.~J.~Keisler,
\emph{Model theory: third edition},
Dover Publications, INC., New York, 2012.

\bibitem{Chernikov}
A.~Chernikov,
\emph{Theories without the tree property of the second kind},
Ann.~Pure Appl.~Logic, \textbf{165}(2) (2014), 695--723.
	
\bibitem{Conant}
G.~Conant, 
\emph{An axiomatic approach to free amalgamation},
J.~Symbolic Logic, \textbf{82}(2) (2017), 648--671.

\bibitem{Conant2}
G.~Conant, 
\emph{Forking and dividing in Henson graphs},
Notre Dame J.~Formal Logic, \textbf{58}(4) (2017), 555--566.

\bibitem{CK}
G.~Conant and A.~Kruckman, 
\emph{Independence in generic incidence structures},
J.~Symbolic Logic, \textbf{84}(2) (2019), 750--780.

\bibitem{E}
C.~d'Elb\'ee,
\emph{Axiomatic theory of independence relations in model theory},
preprint, arXiv:2308.07064 (2023).

%\bibitem{EO}
%C.~Ealy and A.~Onshuus,
%\emph{characterizing rosy theories},
%J.~Symbolic Logic, \textbf{72}(3) (2007), 919--940.

%\bibitem{KR}
%A.~Kruckman and N.~Ramsey,
%\emph{Generic expansion and Skolemization in $\operatorname{NSOP}_1$ theories},
%Ann.~Pure Appl.~Logic, \textbf{169}(8) (2018), 755--774.

\bibitem{KR_Kim}
I.~Kalpen and N.~Ramsey,
\emph{On Kim-independence},
J. Eur.~Math.~Soc., \textbf{22}(5) (2020), 1423--1474.

\bibitem{M}
S.~Mutchnik,
\emph{On $\mynsop_2$ theories},
J. Eur.~Math.~Soc., \textbf{28}(8) (2026), 3475--3498.

\bibitem{Shelah}
S. Shelah,
\emph{Toward classifying unstable theories},
Ann.~Pure Appl.~Logic, \textbf{80} (1996), 229--255.

%\bibitem{Simon}
%P.~Simon, 
%\emph{On dp-minimal ordered structures},
%J.~Symbolic Logic, \textbf{76}(2) (2011), 448--460.

\bibitem{Simon_NIP}
P. Simon,
\emph{A guide to NIP theories},
Lecture Notes in Logic,
Cambridge University Press, Cambridge, 2015.

\bibitem{TZ}
K.~Tent and M.~Ziegler,
{A course in model theory},
Lecture notes in logic, vol. 40.
Cambridge University Press, Cambridge, 2012. 

\end{thebibliography}
\end{document}